\documentclass[10pt, reqno]{amsart}
\usepackage{lipsum}
\usepackage{a4wide}
\usepackage{amssymb} 
\usepackage{amsmath}
\usepackage{amsthm} 
\usepackage{amsmath, amssymb, dsfont}
\usepackage{tikz-cd} 
\usepackage[all]{xy}
\usepackage{amscd}
\usepackage{hyperref}
\usepackage{mathrsfs}
\usepackage{enumitem}
\hypersetup{colorlinks,linkcolor={blue},citecolor={blue},urlcolor={black}}
\theoremstyle{plain}
\numberwithin{equation}{section}

\newtheorem{theorem}{Theorem}[subsection]
 
\newtheorem{lemma}[theorem]{Lemma}
\newtheorem{remark}[theorem]{Remark}
\newtheorem{proposition}[theorem]{Proposition}

\newtheorem{definition}[theorem]{Definition}

\title{Tate cohomology of unipotent cuspidal representations of finite groups of Lie type}
\author{Sabyasachi Dhar}
\begin{document}
\maketitle
\begin{abstract}
Let $p$ be a prime and $k$ be an algebraic closure of the finite field with $p$ elements. Let $G$ be a connected reductive algebraic group over $k$, and let $F:G\rightarrow G$ be a Frobenius morphism, defining an $\mathbb{F}_q$-rational structure on $G$, where $q$ is a power of $p$. Let $\ell$ be a prime different from $p$ and $\ell\ne 2$. Let $\Pi$ be an $\ell$-adic cuspidal unipotent representation of $G^{F^\ell}$ such that $\Pi\simeq \Pi^\sigma$, where $\sigma$ is a generator the Galois group ${\rm Gal}(\mathbb{F}_{q^\ell}/\mathbb{F}_q)$. In this paper, we prove the irreducibility of the zeroth Tate cohomology group $\widehat{\rm H}^0(\sigma,\mathcal{L})$ and vanishing of the first Tate cohomology group $\widehat{\rm H}^1(\sigma,\mathcal{L})$ for any $G^{F^\ell}\rtimes{\rm Gal}(\mathbb{F}_{q^\ell}/\mathbb{F}_q)$-stable lattice $\mathcal{L}$ in $\Pi$, using the multiplicity-one property of a progenerator constructed via the generalized Gelfand-Graev representations and Harish-Chandra induction.
\end{abstract}
\section{Introduction}
Let $p$ be a prime and $k=\overline{\mathbb{F}}_p$ be an algebraic closure of the finite field of size $p$. Let $G$ be a connected reductive algebraic group over $k$, and assume that $G$ is defined over the finite subfield $\mathbb{F}_q\subseteq k$, where $q$ is a power of $p$. Let $F$ be the Frobenius map on $G$, defining the $\mathbb{F}_q$-rational structure on $G$. Then $G^F$ is a finite group. Let $\ell$ be a prime different from $p$. Let $\Pi$ be an irreducible representation of $G^{F^\ell}$ defined over the maximal unramified extension $\mathbb{Q}_\ell^{\rm un}$ of $\mathbb{Q}_\ell$, such that $\Pi$ extends as a representation of $G^{F^\ell}\rtimes\langle\sigma\rangle$, where $\sigma$ is a generator of the Galois group ${\rm Gal}(\mathbb{F}_{q^\ell}/\mathbb{F}_{q})$. Then, for any $G^{F^\ell}\rtimes\langle\sigma\rangle$-stable $\mathbb{Z}_\ell^{\rm un}$-lattice $\mathcal{L}$ in $\Pi$, the Tate cohomology groups $\widehat{\rm H}^i(\sigma,\mathcal{L})$, for $i=0,1$, are $\overline{\mathbb{F}}_\ell$-representations of $G^F$. In this article, we study these Tate cohomology groups when $\Pi$ is a unipotent cuspidal representation of $G^{F^\ell}$ and $\ell\ne 2$.

This paper is broadly motivated by a conjecture of Treumann and Venkatesh (\cite[Conjecture 6.1]{TV_functoriality}) on mod-$\ell$-functoriality. Their conjectures predict that if $\rho$ is a smooth irreducible $\overline{\mathbb{F}}_\ell$-representation of $\mathcal{G}(L)$ with $\rho\simeq \rho^{\sigma}$, where $\mathcal{G}$ is a reductive algebraic group defined over a $p$-adic field $L$ and $\sigma$ an order $\ell$ automorphism of $\mathcal{G}$ defined over $L$, then the irreducible $\mathcal{G}^\sigma(L)$-subquotients of $\widehat{\rm H}^i(\sigma,\rho)$ are related to the $\mathcal{G}(F)$-representation $\rho$ via mod-$\ell$ functoriality in the sense of Langlands. In recent years, there has been considerable progress towards these conjectures; see \cite{Ronchetti_BC}, \cite{Nadimpalli_2024_GLn}, \cite{feng2023modular}, \cite{feng2024smith}, \cite{dhar2025jacquet}, \cite{cotner2026modular}, \cite{cotner2026local}.
However, the explicit computations, such as the non-vanishing of the Tate cohomology spaces and their irreducibility, are not well understood. This paper mainly aims to study the irreducibility of these Tate cohomology spaces when $L$ is a finite field, $G$ a connected reductive group defined over $L$, and $\Pi$ is a unipotent cuspidal $\mathbb{Q}_\ell^{\rm un}$-representation of $G^{F^\ell}$, where $F$ is the Frobenius endomorphism on $G$ corresponding to the $L$-structure on $G$. 

When $G$ is a classical group, if a unipotent cuspidal representation exists, then it is unique and hence invariant under the automorphism of $G$ induced by the Galois automorphism $\sigma$. Additionally, the unipotent cuspidal representations of $G^F$ are defined over $\mathbb{Q}_\ell$, and they are the building blocks of unipotent representations of $G^F$. Moreover, their mod-$\ell$ reductions are irreducible (\cite[Theorem 1.1]{Dudas_Malle_irr_mod_l}). So, the case of unipotent cuspidal representations is a natural testing ground for Treumann--Venkatesh's conjecture. Motivated by this, a recent joint work \cite[Section 7]{dhar2025jacquet} of Nadimpalli and the author proves the irreducibility of the zeroth Tate cohomology of the unipotent cuspidal representation of the symplectic group ${\rm Sp}_4(\mathbb{F}_{q^\ell})$, and shows vanishing of the first Tate cohomology group of this unipotent cuspidal representation. In this paper, we extend this work to arbitrary finite groups of Lie type having some property of the descents of $\sigma$-stable unipotent conjugacy classes. We now state the main result of this article.
\begin{theorem}\label{intro_main_thm}
Let $G$ be a connected reductive group over $k$, equipped with a Frobenius endomorphism $F$ on $G$ such that $G$ has property $\mathscr{D}$-orbit. Assume that $\ell$ and $p$ are good for $G$, and $\ell$ does not divide $p\,\lvert Z(G)^F/Z^0(G)^F\rvert$. Let $\Pi$ be a unipotent cuspidal $\mathbb{Q}_\ell^{\rm un}$-representation of $G^{F^\ell}$ such that $\Pi\simeq \Pi^\sigma$. Then, for any $\mathbb{Z}_\ell^{\rm un}[G^{F^\ell}\rtimes\langle\sigma\rangle]$-stable lattice $\mathcal{L}$ in $\Pi$, the Tate cohomology $\widehat{\rm H}^0(\sigma,\mathcal{L})$ is irreducible as a representation of $G^F$. Moreover, the first Tate cohomology group $\widehat{\rm H}^1(\sigma,\mathcal{L})$ is trivial.
\end{theorem}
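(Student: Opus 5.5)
The plan is to realize $\mathcal{L}$ inside a $\sigma$-stable projective module and use Tate cohomology of permutation-like modules. Let $\Gamma$ denote the progenerator mentioned in the abstract. For $G^{F^\ell}$ it is built as a direct sum of Harish-Chandra inductions $R_L^G(\Gamma_{u}^L)$ of generalized Gelfand--Graev representations $\Gamma_u^L$ attached to unipotent classes of Levi subgroups $L$. Under the hypotheses ($\ell,p$ good, $\ell\nmid p\,|Z(G)^F/Z^0(G)^F|$), these generate the unipotent $\ell$-blocks. Moreover, a unipotent cuspidal $\Pi$ occurs with multiplicity one in $\Gamma_{u}^{G}\otimes\mathbb{Q}_\ell^{\rm un}$ for a suitable unipotent class $u$: the unipotent support / wave-front class of $\Pi$. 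Since $\Pi\simeq\Pi^\sigma$, this class is $\sigma$-stable. Property $\mathscr{D}$-orbit is exactly what lets me choose a $\sigma$-fixed representative $u\in G^{F}$ descending the class. With that choice, $\Gamma_u$ for $G^{F^\ell}$ is induced from a linear character $\eta$ of a unipotent subgroup $U_{u}^{F^\ell}$ that is $\sigma$-stable, and $\eta$ is $\sigma$-invariant. Hence $\Gamma_u$ carries a natural $G^{F^\ell}\rtimes\langle\sigma\rangle$-structure with integral form $\mathcal{P}=\mathrm{ind}_{U^{F^\ell}}^{G^{F^\ell}}\mathbb{Z}_\ell^{\rm un}[\eta]$.

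First step: compute $\widehat{\rm H}^*(\sigma,\mathcal{P})$. Since $\mathcal{P}$ is a monomial module on the set $G^{F^\ell}/U^{F^\ell}$ permuted by $\sigma$, Tate cohomology is supported on $\sigma$-fixed cosets. Only $\widehat{\rm H}^0$ survives; it uses that $\ell\neq 2$ and that the $\sigma$-fixed points of $G^{F^\ell}/U^{F^\ell}$ are $G^F/U^F$ by Lang's theorem, $U$ being connected. So
\[
\widehat{\rm H}^0(\sigma,\mathcal{P})\simeq \mathrm{ind}_{U^F}^{G^F}\overline{\mathbb{F}}_\ell[\eta^{\ell\text{-root}}] ,\qquad \widehat{\rm H}^1(\sigma,\mathcal{P})=0 .
\]
The character here is $\eta|_{U^F}$ twisted by Frobenius-type Shintani/norm considerations, so this is the mod-$\ell$ generalized Gelfand--Graev module $\overline{\Gamma}_u$ of $G^F$.

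Second step: pass from $\mathcal{P}$ to $\mathcal{L}$. Cut out the $\Pi$-isotypic part using the block/cuspidal idempotent, which is $\sigma$-stable. Multiplicity one gives $e\mathcal{P}\otimes\mathbb{Q}\simeq\Pi\oplus(\text{other unipotent constituents})$. Since $\Pi$ is cuspidal, I would instead project onto the cuspidal quotient: killing all Harish-Chandra induced pieces, i.e. using the kernel of the $\sigma$-equivariant truncation maps to $R_L^G$ of Levi data, yields a $\sigma$-stable lattice $\mathcal{L}_0$ in $\Pi$ that is a direct summand of $\mathcal{P}$ up to these truncations. All lattices in $\Pi$ compatible with $\sigma$ differ by scaling after using that $\overline{\Pi}$ is irreducible (Dudas--Malle), so $\widehat{\rm H}^i(\sigma,\mathcal{L})$ is independent of $\mathcal{L}$, and I may take $\mathcal{L}=\mathcal{L}_0$. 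The long exact sequence in Tate cohomology for $0\to\mathcal{L}\to\mathcal{P}\to\mathcal{Q}\to 0$, with $\mathcal{Q}$ assembled from Harish-Chandra induced lattices, then gives the key facts. It bounds $\widehat{\rm H}^1(\sigma,\mathcal{L})$ by a cokernel that vanishes. It also identifies $\widehat{\rm H}^0(\sigma,\mathcal{L})$ with a submodule of $\overline{\Gamma}_u$ annihilated by all Harish-Chandra restrictions, i.e. a cuspidal piece.

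Third step (irreducibility): $\widehat{\rm H}^0(\sigma,\mathcal{L})$ is a $G^F$-module that is cuspidal and lies in $\overline{\Gamma}_u$ for $G^F$. By the multiplicity-one property for the progenerator of $G^F$, it has a unique irreducible cuspidal quotient and socle; a Brauer character count then forces it to be simple. The count compares $\dim$ with the Glauberman/Shintani descent of $\Pi$, using $\dim\widehat{\rm H}^0-\dim\widehat{\rm H}^1\equiv$ trace of $\sigma$ on $\Pi$, which is $\pm\dim$ of the corresponding unipotent cuspidal of $G^F$. The main obstacle is this step, together with the $\sigma$-equivariance of the truncation maps in step two, which needs the $\mathscr{D}$-orbit property applied also to Levi subgroups.
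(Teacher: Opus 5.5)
Your first step is fine: it is essentially the paper's computation of the Tate cohomology of a GGGR, up to the Frobenius twist. The proof breaks at the second step, which carries all the weight.

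Killing the Harish-Chandra induced part of $\Gamma_u\otimes\mathcal{K}$ leaves a lattice in the \emph{whole} cuspidal part of $\Gamma_u\otimes\mathcal{K}$, which in general need not be $\Pi$ alone. Moreover, over $\Lambda$ the cuspidal part of a projective lattice is not a direct summand. So there is no sequence $0\to\mathcal{L}\to\mathcal{P}\to\mathcal{Q}\to 0$ with $\mathcal{Q}$ built from Harish-Chandra induced lattices.

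You could instead take $\mathcal{L}=\mathcal{P}\cap e_\Pi(\mathcal{P}\otimes\mathcal{K})$. This is legitimate, since all stable lattices in $\Pi$ are homothetic, and it gives a $\sigma$-equivariant sequence with $\mathcal{Q}$ torsion-free. But then the long exact sequence only tells you two things: $\widehat{\rm H}^1(\sigma,\mathcal{L})\simeq\mathrm{coker}\big(\widehat{\rm H}^0(\sigma,\mathcal{P})\to\widehat{\rm H}^0(\sigma,\mathcal{Q})\big)$, and $\widehat{\rm H}^0(\sigma,\mathcal{L})$ is an extension of a submodule of $\widehat{\rm H}^0(\sigma,\mathcal{P})$ by $\widehat{\rm H}^1(\sigma,\mathcal{Q})$. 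You have no control over $\widehat{\rm H}^*(\sigma,\mathcal{Q})$; that is as hard as the original problem. So neither ``the cokernel vanishes'' nor ``$\widehat{\rm H}^0$ embeds in $\overline{\Gamma}_u$'' is established.

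Step three has a similar problem. It appeals to multiplicity one for the progenerator of $G^F$, but that is a statement about unipotent cuspidal characters of $G^F$. You do not know that $\widehat{\rm H}^0(\sigma,\mathcal{L})$ is related to any of them; that is the functoriality question itself. A unique simple head and socle does not give simplicity. Your Brauer-character count rests on an unproved Shintani-descent identification of ${\rm Tr}(\sigma\mid\Pi)$, and it would also need the dimension of the head. You also omit the reduction to $G$ simple of adjoint type, which is where the Dudas--Malle multiplicity-one result is available.

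The missing idea is to test $\widehat{\rm H}^i(\sigma,\mathcal{L})$ directly against the progenerator $\mathscr{P}$ of $G^F$, rather than embedding $\mathcal{L}$ in a GGGR.
\begin{itemize}
\item The Harish-Chandra induced summands of $\mathscr{P}$ contribute nothing, because Tate cohomology of a cuspidal lattice is cuspidal (Proposition~\ref{Tate_cuspidal}). So no hypothesis on Levi subgroups is needed.
\item For the summands $\overline{\Gamma}_u^{G^F}$, the key lemma is that Tate cohomology commutes injectively with twisted coinvariants: $\widehat{\rm H}^i(\sigma,\mathcal{L})_{U_{1.5}^F,\overline{\widetilde{\psi}}_u}\hookrightarrow\widehat{\rm H}^i(\sigma,\mathcal{L}_{U_{1.5}^{F^\ell},\widetilde{\eta}_u})$ (Lemma~\ref{Tate_injection}).
\item Multiplicity one at level $G^{F^\ell}$, together with bijectivity of $\iota_\sigma$, makes the right-hand side $\widehat{\rm H}^i$ of $\Lambda$ with trivial $\sigma$-action for exactly one $G^F$-orbit in $C_\Pi^F$, and $0$ for all others. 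Injectivity of $\iota_\sigma$ is needed here, not only surjectivity.
\item This gives ${\rm Hom}_{G^F}(\mathscr{P},\widehat{\rm H}^1(\sigma,\mathcal{L}))=0$, hence $\widehat{\rm H}^1=0$.
\item It also gives $\dim{\rm Hom}_{G^F}(\mathscr{P},\widehat{\rm H}^0(\sigma,\mathcal{L}))\le 1$, hence length at most one by exactness of ${\rm Hom}_{G^F}(\mathscr{P},-)$.
\item Non-vanishing of $\widehat{\rm H}^0$ then comes from the trace identity of Lemma~\ref{char_iden}, once $\widehat{\rm H}^1=0$.
\end{itemize}
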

Here, $Z(G)$ denotes the center of $G$, and $Z^0(G)$ is the identity component of $Z(G)$. The property ``$\mathscr{D}$-orbit" is a technical hypothesis on $G$ about the descent of $\sigma$-invariant $F^\ell$-stable unipotent conjugacy classes to the unipotent $F$-stable conjugacy classes (see subsection \ref{Descent_orbit_subsection} for detailed discussion). Most classical algebraic groups satisfy the property $\mathscr{D}$-orbit, and if $\ell$ does not divide $\lvert G^F\rvert$, then $G$ automatically satisfies the property $\mathscr{D}$-orbit. Recall that $p$ is good for $G$ if $p$ is good for each simple factor involved in $G$. In fact, when $p>5$, then $p$ is good for each simple type. 

We first prove Theorem \ref{intro_main_thm} when $G$ is simple of adjoint type and $p$ is good for $G$, using multiplicity-one property of the generalized Gelfand--Graev representations of $G^F$, as shown in \cite{Dudas_Malle_irr_mod_l}. To be more precise, given an $F$-stable unipotent element $u\in G$, let $\Gamma_u^{G^F}$ be the generalized Gelfand-Graev representation associated with $u$, obtained by induction of a certain one-dimensional representation from a unipotent $F$-stable subgroup of $G$. It only depends on the $G^F$-conjugacy classes of $u$. The construction is given in \cite{Kawanaka_GGGR_1, Kawanaka_GGGR_2, Kawanaka_GGGR_Shintani_lift} (with the assumption that $p$ is good for $G$) and in \cite{Malle_GGGR_bad_char} (with no restriction on $p$). If $u$ is the identity element, then $\Gamma_1^{G^F}$ is the regular representation of $G^F$; if $u$ is a regular unipotent element, then $\Gamma_u^{G^F}$ is the usual Gelfand-Graev representation (see \cite[Section 8.1]{Carter}). In \cite[Section 2.4]{Dudas_Malle_irr_mod_l}, Dudas--Malle constructed a progenerator for the category of $\overline{\mathbb{F}}_\ell$-representations of $G^F$, namely
$$ \mathscr{P}= \bigoplus_{(L,C)}R_L^G(\overline{\Gamma}_C^{L^F}),$$
where $(L,C)$ runs over a set of pairs consisting of a $1$-split Levi subgroup $L$ and an $F$-stable unipotent class $C$ of $G$ that is cuspidal for $L_{\rm ad}$, and $\overline{\Gamma}_C^{L^F}$ is the mod-$\ell$ reduction of $\Gamma_C^{L^F}$ which is the direct sum of $\Gamma_u^{L^F}$'s with $u$ running over a set of representatives of $G^F$-orbits in $C^F$ (see subsection \ref{pro_gen} for details). Here $R_L^G$ is the Harish-Chandra induction functor (see subsection \ref{HC_functors} for the definition), which coincides with the parabolic induction functor as $\ell \ne p$. Since $\Pi$ is cuspidal, the Tate cohomology groups $\widehat{\rm H}^i(\sigma,\mathcal{L})$ are also cuspidal (by \cite[Corollary 3.3.3]{dhar2025jacquet}). Then, using Frobenius reciprocity, we get the following identity
$$ {\rm dim}_{\overline{\mathbb{F}}_\ell}\,{\rm Hom}_{G^F}(\mathscr{P},\widehat{\rm H}^i(\sigma,\mathcal{L})) = \sum_{C} {\rm dim}_{\overline{\mathbb{F}}_\ell}\,{\rm Hom}_{G^F}(\overline{\Gamma}_C^{G^F},\widehat{\rm H}^i(\sigma,\mathcal{L})), $$
where $C$ runs over the set of $F$-stable unipotent conjugacy classes in $G$. Since $G$ is simple of adjoint type and $p$ is good for $G$, it follows from \cite[Proposition 3.1]{Dudas_Malle_irr_mod_l} that the only surviving term in the above sum corresponds to the $F$-stable unipotent support $C_\Pi$ of $\Pi$. Then, using the compatibility of Tate cohomology groups and the space of coinvariants with respect to the $F$-stable unipotent subgroup of $G$ involved in the construction of the generalized Gelfand-Graev representations (see Lemma \ref{Tate_injection}) and the fact that $G$ has the property $\mathscr{D}$-orbit, we get
$$ {\rm dim}_{\overline{\mathbb{F}}_\ell}\,{\rm Hom}_{G^F}(\mathscr{P},\widehat{\rm H}^1(\sigma,\mathcal{L})) =0, $$
and 
$$ {\rm dim}_{\overline{\mathbb{F}}_\ell}\,{\rm Hom}_{G^F}(\mathscr{P},\widehat{\rm H}^0(\sigma,\mathcal{L}))\leq 1. $$
The first identity shows that $\widehat{\rm H}^1(\sigma,\mathcal{L})$ is trivial as $\mathscr{P}$ is a progenerator. Furthermore, using the triviality of $\widehat{\rm H}^1(\sigma,\mathcal{L})$ together with a certain character identity, we see that the space $\widehat{\rm H}^0(\sigma,\mathcal{L})$ is non-zero. Then the irreducibility of $\widehat{\rm H}^0(\sigma,\mathcal{L})$ follows from the second identity and hence proves Theorem \ref{intro_main_thm} when $G$ is simple of adjoint type with the property $\mathscr{D}$-orbit, and $p$ is good for $G$ (see Theorem \ref{Tate_irr}). Now, consider the case where $G$ is not necessarily simple of adjoint type. Then the hypothesis on $\ell$ and the fact that $\Pi$ is trivial on $Z(G)^{F^\ell}$ reduce to the case where $G$ is semisimple of adjoint type. In that case, $G$ is a direct product $G_1^{n_1}\times\cdots\times G_r^{n_r}$, where each $G_i$ is a simple algebraic group of adjoint type such that $G^F=G^{F^{n_1}}\times\cdots\times G_r^{F^{n_r}}$. Moreover, the unipotent cuspidal representation $\Pi$ of $G^{F^\ell}$ is the tensor product of unipotent cuspidal $\sigma$-stable representations $\Pi_i$ of the simple factors $G_i^{F^{\ell n_i}}$. Then the result for simple algebraic groups of adjoint type $G_i$ discussed above gives the required result for $G$ (see Theorem \ref{Tate_irr_general}). 

As a possible application of Theorem \ref{intro_main_thm}, it is expected to prove Treumann--Venkatesh's functoriality conjecture for unipotent cuspidal representations of finite classical groups $G^F$ (see \cite[Theorem 7.2.3]{dhar2025jacquet} for $G={\rm Sp}_4$ and the unipotent cuspidal representation $\theta_{10}$ of ${\rm Sp}_4^F$), by comparing the character formulae mod $\ell$ for the unipotent cuspidal representations of $G^{F^\ell}$ and $G^F$. This will be addressed in future work.

The paper is organized as follows. In Section $2$, we discuss preliminaries that are essential to this paper. In Section $3$, we review the notion of unipotent cuspidal representation. Section $4$ is devoted to the discussion of generalized Gelfand-Graev representations. In Section $5$, we discuss the notion of Tate cohomology groups and set up some initial results. In Section $6$, we prove Theorem \ref{intro_main_thm}.
\vspace{0.4 cm}\\
\textbf{Acknowledgements.} The author is grateful to Santosh Nadimpalli for suggesting the problem and helpful insights. The author thanks Justin Trias for several discussions during this work. The author would like to thank Dipendra Prasad for his comments on the paper. The author also thanks the Indian
Institute of Technology Bombay for supporting this research through the Institute
Postdoctoral Fellowship.
\section{Preliminaries}
\subsection{Notation}
In this subsection, we fix some notation used throughout the paper. Let $p$ be a prime, and let $k$ be an algebraic closure of the finite field with $p$ elements. Let $G$ be a connected reductive algebraic group over $k$, and assume that $G$ is defined over the finite subfield $\mathbb{F}_q\subseteq k$. Let $F$ be the Frobenius morphism on $G$, defining the $\mathbb{F}_q$-rational structure on $G$. Then the set of $F$-fixed points $G^F=\{g\in G\mid F(g)=g\}$ is a finite group. For any positive integer $m$, the composition $F^m$ is also a Frobenius morphism on $G$, which defines an $\mathbb{F}_{q^m}$-rational structure on $G$. We denote by $Z(G)$ the center of $G$. For any algebraic group $\mathcal{G}$ over $k$, let $\mathcal{G}^0$ denote the connected component of $\mathcal{G}$ containing the identity. 

Let $\mathcal{K}$ be the maximal unramified extension of $\mathbb{Q}_\ell$ in $\overline{\mathbb{Q}}_\ell$, and let $\Lambda$ denote the ring of integers of $\mathcal{K}$. The prime $\ell$ is a uniformizer of $\Lambda$ and $\Lambda/\ell\Lambda\simeq \overline{\mathbb{F}}_\ell$, an algebraic closure of the finite field of size $\ell$. Let $H$ be a finite group. In this paper, we consider representations $(\pi,V)$ of $H$, where $V$ is a vector space over $\mathcal{K}$ or $\overline{\mathbb{F}}_\ell$, where $\ell$ is a prime different from $p$ and $\ell\ne 2$. Moreover, the representation $(\pi,V)$ is said to be $\ell$-adic (resp. $\ell$-modular) if $V$ is a vector space over the field $\mathcal{K}$ (resp. $\overline{\mathbb{F}}_\ell$). For any commutative ring $\mathcal{C}$ with unity, we denote by ${\rm Rep}_{\mathcal{C}}(H)$ the category of representations of $H$ with coefficients in $\mathcal{C}$. For any subgroup $K$ of $H$, ${\rm Ind}_K^H$ stands for the induction functor from ${\rm Rep}_\mathcal{C}(K)$ to ${\rm Rep}_{\mathcal{C}}(H)$.

Let $(\pi,V)$ be an $\ell$-adic or $\ell$-modular representation of $H$. We denote by $\chi_\pi$ the character of $(\pi,V)$. Let $\mathcal{K}[H]^H$ be the space of class functions $f:H\rightarrow \mathcal{K}$, i.e., $f(hxh^{-1})=f(x)$ for all $h,x\in H$. For two class functions $\varphi_1$ and $\varphi_2$ on $H$, let $\langle\varphi_1,\varphi_2\rangle$ be the scalar defined as 
$$ \langle\varphi_1,\varphi_2\rangle := \frac{1}{\lvert H\rvert}\sum_{h\in H}\varphi_1(h)\,\varphi_2(h^{-1}). $$
In particular, the character $\chi_\pi$ of the $\ell$-adic representation $(\pi,V)$ is a class function on $H$. For two $\ell$-adic representations $\pi$ and $\rho$ of $H$, we have the following identity
$$ \langle\chi_\pi,\chi_\rho\rangle = {\rm dim}_{\mathcal{K}}\,{\rm Hom}_H(\pi,\rho). $$
For a virtual representation $\xi$ of $H$, we also denote its character by $\chi_{\xi}$. If $\ell$ does not divide $\lvert H\rvert$, the scalar product defined above is valid for $\overline{\mathbb{F}}_\ell$-valued class functions on $H$.
\subsection{Harish-Chandra induction and restriction}\label{HC_functors}
In this subsection, we briefly discuss the Harish-Chandra induction and restriction functors. For a reference, see \cite[Chapter 5]{Digne_Michel_book}.
\subsubsection{}
Let $G$ be a connected reductive group defined over $\mathbb{F}_q$, and let $F$ be the corresponding Frobenius morphism on $G$. Let $\mathcal{C}$ be a commutative ring with unity in which the prime $p$ is invertible. Let $P$ be an $F$-stable parabolic subgroup of $G$ and $L$ an $F$-stable Levi subgroup of $P$, so that we have an $F$-stable Levi decomposition $P=L\ltimes U$. 
\begin{enumerate}
\item The {\it Harish-Chandra induction} is the functor 
$$ R_{L\subseteq P}^G:{\rm Rep}_{\mathcal{C}}(L^F)\rightarrow {\rm Rep}_{\mathcal{C}}(G^F) $$
defined by $R_{L\subseteq P}^G:={\rm Ind}_{P^F}^{G^F}\circ {\rm Inf}_{L^F}^{P^F}$, where the inflation ${\rm Inf}_{L^F}^{P^F}$ is the lifting through the quotient $P^F\rightarrow L^F$.
\item For $V\in {\rm Rep}_{\mathcal{C}}(P^F)$, let $r_{U^F}(V) = V^{U^F}$ the space of fixed points under $U^F$. Then the {\it Harish-Chandra restriction} is the functor 
$$ ^*R_{L\subseteq P}^G:{\rm Rep}_{\mathcal{C}}(G^F)\rightarrow {\rm Rep}_{\mathcal{C}}(L^F) $$
defined by $^*R_{L\subseteq P}^G:=r_{U^F}\circ {\rm Res}_{P^F}^{G^F}$.
\end{enumerate}
The functors $R_{L\subseteq P}^G$ and $^*R_{L\subseteq P}^G$ are independent of the choice the parabolic subgroup $P$ (see \cite[Theorem 5.3.1]{Digne_Michel_book} for a proof). So, we will use the notations $R_L^G$ and $^*R_L^G$. Now, let us highlight some of their properties:
\vspace{0.1 cm}\\
(a) The functors $R_L^G$ and $^*R_L^G$ are exact.
\vspace{0.1 cm}\\
(b) For $\pi\in {\rm Rep}_{\mathcal{C}}(G^F)$ and $\rho\in {\rm Rep}_{\mathcal{C}}(L^F)$, we have the following isomorphims
$$ {\rm Hom}_{G^F}(\pi,R_L^G(\rho)) \simeq {\rm Hom}_{L^F}(^*R_L^G(\pi),\rho), $$
and 
$$ {\rm Hom}_{G^F}(R_L^G(\rho),\pi) \simeq {\rm Hom}_{L^F}(\rho,\,^*R_L^G(\pi)). $$
In other words, $R_L^G$ and $^*R_L^G$ are biadjoint. 

\subsubsection{}
Let $(\pi,V)$ be an $R$-representation of $U^F$. For any character $\chi:U^F\rightarrow R^\times$, let $V(U^F,\chi)$ be the space generated by the set of vectors $\{\pi(u)v-\chi(u)v\mid u\in U^F,v\in V\}$. Since $U^F$ is a $p$-group and $\ell\ne p$, we have the following decomposition 
$$ V= V^{U^F,\chi}\oplus V(U^F,\chi), $$ 
where $V^{U^F,\chi}:=\{v\in V\mid \pi(u)v=\chi(u)v,\,\,\forall u\in U^F\}$. Let $V_{U^F,\chi}$ denote the quotient space $V/V(U^F,\chi)$. This is the maximal quotient of $V$ on which $U^F$ acts by $\chi$ and is said to be the space of $(U^F,\chi)$-coinvariants of $V$ or simply the space of coinvariants of $V$ when the pair $(U^F,\chi)$ is clear from the context. From the above decomposition, it follows that 
$$ V^{U^F,\chi} = V_{U^F,\chi}. $$
Thus, the Harish-Chandra restriction $^*R_L^G(\rho)$ for $\rho$ an $R$-representation of $G^F$, is the space of $(U^F,1)$-coinvariants $\rho_{U^F}:=\rho_{U^F,1}$. 
\begin{definition}\normalfont
An $R$-representation $(\pi,V)$ of $G^F$ is called {\it cuspidal} if $^*R_L^G(V)=0$ for all $F$-stable Levi subgroups $L$ of all proper $F$-stable parabolic subgroups $P$ of $G$.
\end{definition}
\subsubsection{}\label{DL_ind}
In \cite{Lusztig_unipotent_class}, Lusztig constructed the functor $R_L^G$ to the case where $L$ is an $F$-stable Levi subgroup of $G$ which is not the Levi subgroup of any $F$-stable parabolic subgroup of $G$. Let $P=L\ltimes U$ be a Levi decomposition of a parabolic subgroup of $G$ where $L$ is $F$-stable. Let us consider the variety
$$ X_U:=\{g\in G\mid g^{-1}F(g)\in F(U)\}/(U\cap F(U)). $$
Put ${\rm H}_c^*(X_U)=\sum_{i\geq 0}(-1)^i{\rm H}_c^i(X_U)$, where ${\rm H}^i_c(X_U)$ is the $\ell$-adic cohomology of $X_U$ with compact support (see \cite[Rapport 2.10]{cohomology} for details). Consider the action of $G^F\times L^F$ on $X_U$ given by 
$$ (g,m)\cdot x = g^{-1}xm^{-1}. $$ 
This action induces an action of $G^F\times L^F$ on each ${\rm H}^i_c(X_U)$, and hence on ${\rm H}_c^*(X_U)$. The Lusztig functor is a functor 
$$ R_{L\subseteq P}^G:{\rm Rep}_{\mathcal{C}}(L^F)\rightarrow {\rm Rep}_{\mathcal{C}}(G^F) $$
defined as $R_{L\subseteq P}^G(\rho) = {\rm H}_c^*(X_U)_\rho$, where ${\rm H}_c^*(X_U)_\rho$ is the $\rho$-isotypic component of the $G^F\times L^F$-module ${\rm H}_c^*(X_U)$. As in the previous case, the functor $R_{L\subseteq P}^G$ is independent of the choice of the parabolic subgroup $P$. So, we will use the notation $R_L^G$ for the Lusztig functor.
\begin{definition}\normalfont
When $L=T$ an $F$-stable maximal torus in $G$ and $\theta$ is an one-dimensional character of $T^F$, then $R_T^G(\theta)$ is called a {\it Deligne--Lusztig character}.
\end{definition}
These characters were introduced by Deligne--Lusztig in \cite{DL_finite_groups}. In general, $R_T^G(\theta)$ is not irreducible, and it is irreducible if and only if $\theta$ is in ``general position", i.e., $\theta^w\ne \theta$ for all $w\in N_G(T)^F\setminus T^F$. In particular, when $\theta=1$, the trivial character of $T^F$, then $R_T^G(1)$ is not irreducible.

\subsection{Integral structure and mod-$\ell$ reduction}
Let $G$ be a group. An $\ell$-adic representation $(\pi,V)$ of $G$ is said to be integral if it has finite length, and there exists a $\Lambda[G]$-stable lattice $\mathcal{L}$ in $V$. Then we have an $\ell$-modular representation $\mathcal{L}\otimes_\Lambda\overline{\mathbb{F}}_\ell$ of $G$, which is of finite length. The semisimplification of the $\ell$-modular representation $\mathcal{L}\otimes_\Lambda
\overline{\mathbb{F}}_\ell$ is independent of the choice of the $\Lambda[G]$-invariant lattice $\mathcal{L}$ in $V$ and is called the mod-$\ell$ reduction of $\pi$, denoted by $r_\ell(\pi)$. 

When $G$ is a finite group, then $(\pi,V)$ always admits a $\Lambda[G]$-stable lattice in $V$. If $H$ is a subgroup of $G$, and $\tau$ is an $\ell$-adic representation of $H$ admitting a $\Lambda[H]$-stable lattice $\mathcal{L}$, the induced module ${\rm Ind}_H^G(\mathcal{L})$ is also a $\Lambda[G]$-stable lattice in ${\rm Ind}_H^G(\tau)$. If $G$ is a connected reductive group over $k$ with a Frobenius morphism $F$ on $G$, and if $P=L\ltimes U$ is an $F$-stable parabolic subgroup of $G$ with $L$ an $F$-stable Levi subgroup and $U$ an $F$-stable unipotent subgroup of $P$, then for any $\ell$-adic representations $\rho$ and $\pi$ of $L^F$ and $G^F$ respectively, we have
$$ r_\ell(R_L^G(\rho)) = R_L^G(r_\ell(\rho)) $$
and
$$ r_\ell(^*R_L^G(\pi)) =\, ^*R_L^G(r_\ell(\pi)). $$
Hence, Harish-Chandra induction and restriction commute with mod-$\ell$ reduction (see \cite[Chapter I, Section 9.3]{Vigneras_mod_l_book}).

\section{Unipotent cuspidal representation}
In this part, we recall the notion of unipotent cuspidal representations and their properties essential to this article. We mainly follow the exposition of \cite{Digne_Michel_book} and \cite{Dudas_Malle_irr_mod_l}.
\subsection{}
Let $G$ be a connected reductive group over $k$, and let $F$ be a Frobenius endomorphism on $G$. For an $F$-stable maximal torus $T$ in $G$ and a character $\theta:T^F\rightarrow \mathcal{K}^\times$, the associated Deligne--Lusztig character is denoted by $R_T^G(\theta)$ (see subsection \ref{DL_ind} for the definition).  
\begin{definition}\normalfont
An irreducible $\ell$-adic representation $\pi$ of $G^F$ is said to be {\it unipotent} if there exists an $F$-stable maximal torus $T$ of $G$ such that $\pi$ occurs in $R_T^G(1)$, i.e.,
$$\langle \chi_\pi, R_{T}^G(1)\rangle_{G^F}\ne 0, $$ where $1$ is the trivial representation of $T^F$. 
\end{definition}
\subsection{Unipotent support}
Let $\rho$ be an irreducible $\ell$-adic representation of $G^F$. Let $C$ be an $F$-stable unipotent class of $G$. Define the average value of $\rho$ on $C^F$ as follows:
$$ {\rm AV}(C,\rho):= \dfrac{1}{\lvert C^F\rvert}\sum_{u\in C^F} \chi_\rho(u). $$
The unipotent class $C$ is called a {\it unipotent support} of $\rho$ if $C$ has maximal dimension among $F$-stable unipotent classes for which ${\rm AV}(C,\rho)\ne 0$. The following theorem, due to Geck (\cite[Theorem 1.4]{Geck_unicity_support}), gives uniqueness of unipotent support of $\rho$.
\begin{theorem}
When $p$ is a good prime for $G$, then an irreducible $\ell$-adic representation $\rho$ of $G^F$ has a unique unipotent support.    
\end{theorem}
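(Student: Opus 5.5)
The plan is to follow Lusztig's approach to unipotent supports via generalized Gelfand--Graev representations, as refined by Geck, and to reduce everything to a statement about multiplicities of $D_G(\rho)$ in the $\Gamma_u^{G^F}$, where $D_G$ denotes Alvis--Curtis duality. I would not expect a short self-contained argument; the proof rests on heavy geometric input (character sheaves and the generalized Springer correspondence), which I would import. The parts I would actually carry out are the reduction and the final comparison of dimensions.

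\emph{Step 1: reduction.} First I reduce to the case where $Z(G)$ is connected and $G$ is simple modulo its centre. Unipotent classes and the sums $\sum_{u\in C^F}\chi_\rho(u)$ are compatible with regular embeddings $G\hookrightarrow \tilde G$ and with central isogenies, and restriction of $\tilde\rho$ to $G^F$ is multiplicity-free. Hence a nonzero average value for $\rho$ on $C$ can be detected from a nonzero average value for some constituent of $\tilde\rho$, up to $\tilde G^F$-conjugation of $\rho$. These averages are constant along $\tilde G^F$-orbits, so uniqueness for $\tilde G$ implies uniqueness for $G$.

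\emph{Step 2: the key identity.} For an $F$-stable unipotent class $C$, I would use Kawanaka's formula together with Lusztig's computation of the values of $D_G(\Gamma_u^{G^F})$. This needs $p$ good, which is exactly where the hypothesis enters. The values of $D_G(\Gamma_u^{G^F})$ are supported on $\overline{C}^F$, and on $C^F$ itself they are given by explicit nonzero multiples of characteristic functions of local systems, through the generalized Springer correspondence. Averaging over the $G^F$-orbits in $C^F$ then gives the identity
\[
\sum_{u} c_u\,\langle \Gamma_u^{G^F}, D_G(\rho)\rangle \;=\; \pm\,\lvert C^F\rvert\,{\rm AV}(C,\rho)\;+\;(\text{contributions from classes } C'\subsetneq \overline{C}).
\]
Here $u$ runs over representatives of the $G^F$-orbits in $C^F$, and the $c_u$ are nonzero constants.

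\emph{Step 3: uniqueness.} Let $C$ be of maximal dimension with ${\rm AV}(C,\rho)\neq0$, and let $C_1$ be another class of the same dimension. Maximality kills the boundary terms, so some $\Gamma_u^{G^F}$ with $u\in C^F$ contains $\pm D_G(\rho)$, and similarly for some $u_1\in C_1^F$. I would then invoke Lusztig's result that $\langle\Gamma_u^{G^F},D_G(\rho)\rangle\ne0$ forces $C\subseteq\overline{\mathcal{O}_\rho}$. Here $\mathcal{O}_\rho$ is the class attached to the Lusztig family of $\rho$ through the Springer correspondence of the special representation, and equality holds for some $u$. This gives $\dim C\le\dim\mathcal{O}_\rho$, with equality attained, and $\mathcal{O}_\rho$ is the unique class of that dimension inside $\overline{\mathcal{O}_\rho}$ meeting the support. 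Therefore $C=\mathcal{O}_\rho=C_1$.

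\emph{Main obstacle.} The hard step is Lusztig's identification in Step 3 of the wave-front class $\mathcal{O}_\rho$, together with the exact nonvanishing on it. Lusztig proves it via character sheaves for $q$ large. To remove the largeness assumption I would follow Geck. The multiplicities $\langle\Gamma_u^{G^F},D_G(\rho)\rangle$ and the relevant Green-function values are given by polynomials in $q$ whose shape is uniform in $q$, by Shoji's results on Green functions in good characteristic. One therefore checks the nonvanishing of the leading coefficient combinatorially through the Lusztig--Shoji algorithm, rather than only asymptotically.
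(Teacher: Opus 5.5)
The paper gives no proof of this statement; it quotes Geck \cite[Theorem 1.4]{Geck_unicity_support}, which extends Lusztig's theorem from $p,q$ large to $p$ good. Your Step 1 is fine: since $C^F$ is $\tilde G^F$-stable, ${\rm AV}(C,\tilde\rho)$ is a positive multiple of ${\rm AV}(C,\rho)$. But Steps 2--3, the part you actually carry out, have a genuine gap.

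\emph{The support claim in Step 2 is backwards.} $D_G(\gamma_u^{G^F})$ is supported on unipotent elements. For $u\in C^F$ it vanishes at $v\in C'^F$ unless $C\subseteq\overline{C'}$, so it lives on classes \emph{above} $C$, not on $\overline{C}$. Take $u=1$, so $\gamma_1^{G^F}$ is the regular character. For $G^F={\rm GL}_2(\mathbb{F}_q)$, with $U$ the upper unitriangular group, one gets $D_G(\gamma_1^{G^F})=(q+1)\,{\rm Ind}_{U^F}^{G^F}1-\gamma_1^{G^F}$. Its value on a regular unipotent element is $(q+1)(q-1)^2\neq 0$. Hence the error terms in $\langle\gamma_u^{G^F},D_G(\rho)\rangle=\langle D_G(\gamma_u^{G^F}),\chi_\rho\rangle$ come from classes of \emph{larger} dimension. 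With your version, maximality of $\dim C$ says nothing about them.

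\emph{Even with the direction fixed, ``maximality kills the boundary terms'' fails.} Those terms are $\sum_{v\in C'^F}D_G(\gamma_u^{G^F})(v)\,\overline{\chi_\rho(v)}$. The weight is not constant on $C'^F$, so they are not multiples of ${\rm AV}(C',\rho)$. Maximality only gives vanishing of averages on larger classes, and that does not make these sums vanish. You would need $\chi_\rho$ to vanish \emph{identically} on $C'^F$ for every class above $C$. That pointwise vanishing is the real content of Lusztig's theorem, not something the definition of unipotent support provides. The same problem blocks ``equality attained'': there you must extract ${\rm AV}(\mathcal{O}_\rho,\rho)\neq 0$ from a nonzero GGGR multiplicity on $\mathcal{O}_\rho$.

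\emph{The imported result in Step 3 cannot close the gap without circularity.} On $C_w^F$, $D_G(\gamma_w^{G^F})$ is a nonzero multiple of the indicator of the $G^F$-class of $w$, and it is supported above $C_w$. So this block-triangular system can be inverted: each $\overline{\chi_\rho(v)}$ with $v\in C'^F$ is a combination of the multiplicities $\langle\gamma_w^{G^F},D_G(\rho)\rangle$ over $w$ in classes $C''$ with $C'\subseteq\overline{C''}$. This inversion, not maximality, is the correct way to pass from GGGR multiplicities to values of $\chi_\rho$. It also shows that the wave-front statement you quote for $D_G(\rho)$ is equivalent to a pointwise strengthening of the theorem you are proving. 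Invoking it therefore amounts to citing the result.

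\emph{What is missing} is the ingredient both statements rest on: the expression of $\chi_\rho$ on unipotent elements through generalized Green functions (character sheaves and the generalized Springer correspondence), together with Lusztig's combinatorics of families and special classes. This is what shows two things:
\begin{enumerate}
\item $\chi_\rho$ vanishes on every $F$-stable unipotent class of dimension $\geq\dim C_0$ other than the class $C_0$ attached to $\rho$;
\item the coefficient of the trivial local system on $C_0$ is nonzero.
\end{enumerate}
Your closing remark about polynomial dependence on $q$ is too vague to substitute for this input, or for Geck's extension of it to all $q$ with $p$ good.
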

We denote by $C_\rho$ the unique unipotent support of an irreducible representation $\rho$ of $G^F$. An $F$-stable unipotent class $C$ of $G$ is said to be {\it non-cuspidal} if there exists a proper $1$-split Levi subgroup $L$ of $G$ such that $C\cap L\ne \emptyset$, and for all $u\in C\cap L$, the natural map 
$$ C_{L}(u)/C_{L}(u)^0\longrightarrow C_G(u)/C_G(u)^0 $$
is an isomorphism, where for any algebraic group $\mathcal{G}$, the symbol $\mathcal{G}^0$ stands for the connected component of $\mathcal{G}$ containing the identity element of $\mathcal{G}$. Here, a $1$-split Levi subgroup of $G$ is by definition an $F$-stable Levi subgroup of an $F$-stable parabolic subgroup of $G$. If no such proper Levi subgroup exists, the unipotent class $C$ is called {\it cuspidal}.

In \cite{Dudas_Malle_irr_mod_l}, Dudas--Malle construct a progenerator $\mathscr{P}$ of the category of both $\ell$-adic and $\ell$-modular representations of $G^F$ using generalized Gelfand-Graev representations associated to suitably chosen unipotent classes. They then prove that the cuspidal unipotent characters of $G^F$ occur with multiplicity one in $\mathscr{P}$. This multiplicity one result is very crucial in deducing the irreducibility of the mod-$\ell$ reduction of unipotent cuspidal representations of $G^F$ under a restrictive hypothesis on $p$, $\ell$ and the groups involved (see \cite[Theorem 1]{Dudas_Malle_irr_mod_l}). We state the irreducibility result for future reference.
\begin{theorem}\label{mod_l_irr_unip_cusp}
Let $G$ be a connected reductive group over $k$, equipped with a Frobenius morphism $F:G\rightarrow G$. Assume that the primes $\ell$ and $p$ are good for $G$, and $\ell$ does not divide $p\,\lvert Z(G)^F/Z^0(G)^F\rvert$. Then, the mod-$\ell$ reduction of any unipotent cuspidal $\ell$-adic representation of $G^F$ is irreducible.
\end{theorem}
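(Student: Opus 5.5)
The plan is to follow the Dudas--Malle strategy: first reduce to a simple group of adjoint type, then prove that the reduction mod $\ell$ of a cuspidal unipotent character occurs in a projective progenerator with multiplicity one.

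\emph{Reduction.} Let $\rho$ be a unipotent cuspidal $\ell$-adic representation of $G^F$.
\begin{itemize}
\item The central isogeny $G\to G_{\rm ad}$ is compatible with unipotent characters, and $\rho$ is trivial on $Z(G)^F$.
\item Because $\ell\nmid |Z(G)^F/Z^0(G)^F|$, the image of $G^F$ in $G_{\rm ad}^F$ is normal of $\ell'$-index. Hence irreducibility mod $\ell$ can be checked after restriction, via Clifford theory.
\item So we may assume $G$ is adjoint, and then a product of restrictions of scalars of simple adjoint groups.
\item For such a product, $\rho$ is an outer tensor product of cuspidal unipotent representations of the factors. Tensor products of absolutely irreducible $\overline{\mathbb F}_\ell$-representations remain irreducible.
\end{itemize}
This reduces the problem to $G$ simple adjoint.

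\emph{Progenerator.} Form $\mathscr P=\bigoplus_{(L,C)}R_L^G(\Gamma_C^{L^F})$, the sum over $1$-split Levi subgroups $L$ and $L^F$-stable unipotent classes $C$ of $L$ that are cuspidal for $L_{\rm ad}$. Each $\Gamma_C^{L^F}$ is induced from a character of a $p$-group, so it has a $\Lambda$-form that is projective. Harish-Chandra induction preserves projectivity, so $\overline{\mathscr P}$ is projective. It is a progenerator because every simple $\overline{\mathbb F}_\ell G^F$-module $S$ admits a nonzero map from it, which I would show in three steps:
\begin{itemize}
\item Choose $L$ minimal with ${}^*R_L^G(S)\neq0$. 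Then ${}^*R_L^G(S)$ has a cuspidal quotient $S'$, and by adjunction it suffices to show $S'$ receives a map from some $\overline{\Gamma}_C^{L^F}$.
\item For cuspidal $S'$, take a unipotent class $C$ of $L$ that is maximal among classes whose GGGR maps nontrivially to $S'$. The regular representation is a GGGR, so some nonzero map exists.
\item Show that a class $C$ which is non-cuspidal in $L$ cannot be such a maximal class for cuspidal $S'$. The reason is that $\Gamma_C^{L^F}$ would then be, up to classes of larger dimension, Harish-Chandra induced from a proper Levi subgroup. This is where good $p$ and the Kawanaka/Lusztig results on GGGRs are used.
\end{itemize}

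\emph{Multiplicity one.} Here $\mathrm{Hom}(\overline{\mathscr P},\bar\rho)$ is computed $\ell$-adically, since projective Hom dimensions are given by ordinary characters. Write $\dim\mathrm{Hom}(\mathscr P_{\mathcal K},\rho)=\sum_{(L,C)}\langle \Gamma_C^{L^F},{}^*R_L^G\rho\rangle$.
\begin{itemize}
\item Cuspidality of $\rho$ kills every term with $L\neq G$.
\item What remains is $\sum_C\langle\Gamma_C^{G^F},\rho\rangle$ over cuspidal classes $C$ of $G$.
\item By Lusztig's theory of GGGRs and unipotent supports (with $p$ good), $\langle\Gamma_u,\rho\rangle=0$ unless $C_u\subseteq\overline{C_\rho}$. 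The wave front set of $\rho$ is $C_\rho$, up to the duality relating unipotent support and wave front set.
\item For cuspidal unipotent $\rho$, the support $C_\rho$ is a cuspidal class and is the unique cuspidal class in the closure meeting $\rho$ nontrivially. On that class, $\sum_{u}\langle\Gamma_u,\rho\rangle=1$ by Lusztig's multiplicity-one result for the wave front class.
\end{itemize}
Hence $\rho$ occurs in $\mathscr P_{\mathcal K}$ with multiplicity exactly one.

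\emph{Conclusion.} Let $\mathscr P=\bigoplus P_S^{m_S}$ be the decomposition into projective covers. Then $m_S=\dim\mathrm{Hom}(\overline{\mathscr P},S)\ge1$, and $\dim\mathrm{Hom}(\mathscr P_{\mathcal K},\rho)=\sum_S m_S\,[\bar\rho:S]$. Since this equals one, $\bar\rho$ has exactly one composition factor, so it is irreducible.

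\emph{Main obstacle.} The hardest point is the combinatorial claim that no cuspidal class strictly smaller than $C_\rho$ contributes. I would try to verify it case by case from Lusztig's classification of cuspidal unipotent characters and their supports, combined with the good-$\ell$ hypothesis.
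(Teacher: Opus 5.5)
The paper gives no proof of this statement. It quotes it from Dudas--Malle \cite[Theorem 1]{Dudas_Malle_irr_mod_l}, and your outline reproduces the architecture of their argument: reduction to simple adjoint type, the GGGR progenerator, multiplicity one of $\rho$ in it, and Brauer reciprocity. The adjunction bookkeeping and the final count $\sum_S m_S[\bar\rho:S]=1$ are correct.

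The genuine gap is the step you flag yourself, and the tools you propose for it would not close it.

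\emph{Excluding other cuspidal classes.} Lusztig's theorem as quoted in Section 4 kills $\langle\gamma_u,\rho\rangle$ only for classes of dimension $>\dim C_{\rho^*}$. It is not a closure statement. (For cuspidal $\rho$ one has $\rho^*=\pm\rho$, so $C_{\rho^*}=C_\rho$ and no duality issue arises.) More importantly, no vanishing statement of this kind can exclude cuspidal classes lying below $C_\rho$, because $\rho$ really does occur in GGGRs of small classes: $\Gamma_1$ is the regular representation. What is needed is a structural fact about the set of cuspidal classes, checked type by type. For $G$ simple adjoint and $p$ good, $C_\rho$ is cuspidal and every other cuspidal class has strictly larger dimension. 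For example, in $\mathrm{Sp}_4$ the cuspidal classes are $(4)$ and $(2,2)$, and $(2,2)$ is the support of $\theta_{10}$. Given this, part (2) of Lusztig's theorem removes every other term at once. This is a characteristic-zero statement, so the good-$\ell$ hypothesis plays no role in it.

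\emph{Multiplicity one on $C_\rho$.} The equality $\sum_{u\in C_\rho^F/G^F}\langle\gamma_u,\rho\rangle=1$ does not follow from any general multiplicity-one theorem for the wave front class. Lusztig's theorem gives only non-vanishing there, and the sum can exceed $1$ in general. For instance, the Steinberg character of $\mathrm{SL}_2(q)$, $q$ odd, has regular wave front set and lies in both Gelfand--Graev representations. The value $1$ is a special property of cuspidal unipotent characters. Both facts are exactly what Dudas--Malle's Proposition 3.1 supplies, and the paper invokes that proposition for this purpose in Section 6. Without them, or that citation, your argument does not close.

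\emph{Two smaller points.}

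The step of the progenerator argument that discards non-cuspidal classes is only gestured at. Your heuristic that $\Gamma_C$ is Harish-Chandra induced up to larger classes is not a proof; this is the main content of Dudas--Malle's Section 2, which the paper also simply cites. You should cite it rather than claim to show it.

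In the reduction, Clifford theory alone does not give irreducibility of $\bar\rho_{\rm ad}$ restricted to the image $H$ of $G^F$. Since $G_{\rm ad}^F/H$ is abelian of $\ell'$-order, the restriction is semisimple. Moreover $\dim{\rm End}_H(\bar\rho_{\rm ad})$ equals the number of linear characters $\chi$ of $G_{\rm ad}^F/H$ with $\bar\rho_{\rm ad}\otimes\chi\simeq\bar\rho_{\rm ad}$, so you must rule out $\chi\neq 1$. This holds because $\rho_{\rm ad}\otimes\chi$ lies in a Lusztig series $\mathcal{E}(G_{\rm ad}^F,z)$ with $z\neq 1$ central of $\ell'$-order. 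That series is in a different $\ell$-block from $\rho_{\rm ad}$, so the two reductions cannot be isomorphic.
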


\section{Generalized Gelfand-Graev representation}
In this section, we recall the construction of the generalized Gelfand--Graev representation (``GGGR" for short) for a connected reductive algebraic group $G$ defined over $\mathbb{F}_q$, with corresponding Frobenius morphism $F:G\rightarrow G$. An ordinary Gelfand--Graev representation of $G^F$ is associated with a regular nilpotent ${\rm Ad}(G)$-orbit in $\mathfrak{g}={\rm Lie}(G)$. A generalized Gelfand-Graev representation of $G^F$ is associated with a unipotent element of $G$, and the construction of such representations is due to Kawanaka. Throughout this section, we assume that $p$ is good for $G$. We mainly follow the exposition of \cite{Geck_unicity_support}.
\subsection{}
Let us fix an $F$-stable maximal torus $T\subseteq G$ and an $F$-stable Borel subgroup $B\subseteq G$ containing $T$. Let $\Phi$ be the set of roots of $G$ with respect to $T$, and let $\Delta$ be the set of simple roots of $G$ determined by $B$. Let $\mathfrak{g}$ be the Lie algebra of $G$. Then $G$ acts on $\mathfrak{g}$ via the adjoint representation ${\rm Ad}:G\rightarrow {\rm GL}(\mathfrak{g})$. We have a direct sum decomposition
$$ \mathfrak{g}=\mathfrak{t}\oplus
\bigoplus_{\alpha\in \Phi}\mathfrak{g}_\alpha, $$
where $\mathfrak{t}$ denotes the Lie algebra of $T$ and $\mathfrak{g}_\alpha=\{X\in\mathfrak{g}\mid {\rm Ad}(t)(X)=\alpha(t)X,\,\forall t\in T\}$. For each $\alpha\in\Phi$, we have a one-parameter subgroup $x_\alpha:\mathbb{G}_a\rightarrow G$, whose image is denoted by $U_\alpha$. The subspace $\mathfrak{g}_\alpha$ is the image of the differential $dx_\alpha:\mathbb{G}_a\rightarrow \mathfrak{g}$, and we set $X_\alpha=dx_\alpha(1)\in \mathfrak{g}_\alpha$. Then $X_\alpha\ne 0$ and $\mathfrak{g}_\alpha$ is a one-dimensional subspace generated by $X_\alpha$.
\subsection{}
Fix a unipotent element $u\in G^F$, and let $d:\Phi\rightarrow \mathbb{Z}$ be the associated weighted Dynkin diagram (see \cite[Section 2]{Geck_unicity_support} for the definition). For $i\in\mathbb{Z}$, set $\Phi_i=\{\alpha\in\Phi: d(\alpha)=i\}$ and define
$$ \mathfrak{g}(i):=\begin{cases}
    \bigoplus_{\alpha\in \Phi_i}\mathfrak{g}_\alpha & \text{if } i \ne 0, \\
\mathfrak{t}\oplus\bigoplus_{\alpha\in\Phi_0}
\mathfrak{g}_\alpha  & \text{if } i=0.
\end{cases}$$
Then we have the following decomposition
$$ \mathfrak{g} = \bigoplus_{i\in\mathbb{Z}} \mathfrak{g}(i), $$
Consider the parabolic subgroup 
$P:=\langle\, T,U_\alpha\mid \alpha\in \Phi\,\,\forall i\geq 0\,\rangle$ with the unipotent radical $U_1:=\langle\,U_\alpha\mid \alpha\in\Phi_i\,\,\forall i\geq 1\,\rangle$ and Levi subgroup $L:=\langle\,T,U_\alpha\mid\alpha\in \Phi_0\,\rangle$.  
For each $i\geq 1$, let $U_i$ be the subgroup corresponding to the Lie algebra 
$$ \bigoplus_{j\geq i} \mathfrak{g}(i). $$
Then we have a decreasing chain of subgroups $U_1\supseteq U_2\supseteq U_3\supseteq \cdots$, and each $U_i$ is a normal subgroup of $P$ such that the quotient group $U_i/U_{i+1}$ is abelian. Note that the subgroups $P$, $U_i\,(i\geq 1)$ of $G$ are $F$-stable, and each unipotent group $U_i^F$ is a $p$-group. Also note that the subspaces $\mathfrak{g}(i)\,\,(i\geq 0)$ are $F$-stable. 

Consider a map
$f_{\rm spr}:U_1\rightarrow \mathfrak{g}(1)\oplus \mathfrak{g}(2)$ defined as follows. Let $u\in U_1$. The element $u$ has a unique expression
$$ u=\prod_{\alpha\in\Phi_i\,\text{for}\,i\geq 1} x_\alpha(u_\alpha)\,\,\,\,\,\, (u_\alpha\in \overline{\mathbb{F}}_q). $$
Then 
$$ f_{\rm spr}(u) :=\sum_{\alpha\in\Phi_1\cup\Phi_2}u_\alpha X_\alpha. $$
The map $f_{\rm spr}$ has the following properties, which we state as a lemma (see \cite[Lemma 2.7]{Malle_GGGR_bad_char} for a proof).
\begin{lemma}\label{springer_map_lemma}
Let $x,y\in U_1$. Then the following holds.
\begin{enumerate}
    \item If $x\in U_2$ or $y\in U_2$, then $f_{\rm spr}(xy)=f_{\rm spr}(x)+f_{\rm spr}(y)$.
    \item $f_{\rm spr}(xyx^{-1}y^{-1})- [f_{\rm spr}(x),f_{\rm spr}(y)] \in \bigoplus_{i\geq 3}\mathfrak{g}(i)$.
    \item $f_{\rm spr}(F(x)) = F(f_{\rm spr}(x))$.
\end{enumerate}
\end{lemma}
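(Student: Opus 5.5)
The plan is a direct computation with the coordinates $u=\prod x_\alpha(u_\alpha)$, using the Chevalley commutator formula together with the grading by $d$. Throughout, I assume the product over $\alpha\in\bigcup_{i\ge1}\Phi_i$ is taken in a fixed order compatible with the grading, i.e.\ roots of smaller $d$-weight first. Two structural facts drive everything:
\[
[U_i,U_j]\subseteq U_{i+j},
\]
and each $U_i/U_{i+1}$ is abelian, isomorphic as a variety to $\mathfrak g(i)$ via the coordinates. Combining these, the map $U_1\to U_1/U_3\cong \mathfrak g(1)\oplus\mathfrak g(2)$ (as varieties) reads off exactly the coordinates $u_\alpha$ with $d(\alpha)\in\{1,2\}$. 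Hence $f_{\rm spr}$ factors through $U_1/U_3$.

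\textbf{Part (1).} Suppose $y\in U_2$ (the case $x\in U_2$ is symmetric). Write $x=x_1x_2x_{\ge3}$ and $y=y_2y_{\ge3}$ according to degree. Reordering $x_{\ge3}$ past $y_2$ only produces elements of $U_3$, and $x_2y_2$ is computed in the abelian quotient $U_2/U_3$. Moreover, $[U_1,U_2]\subseteq U_3$ and $[U_2,U_2]\subseteq U_4$, so moving degree-$2$ factors past degree-$1$ factors only creates terms of degree $\ge3$. Therefore, inside $U_1/U_3$, the coordinates in degrees $1$ and $2$ simply add. The same argument handles the case $x\in U_2$.

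\textbf{Part (2).} Modulo $U_3$, the commutator $xyx^{-1}y^{-1}$ equals the commutator of the degree-$1$ parts $x_1,y_1$. I expand $x_1=\prod_{\alpha\in\Phi_1}x_\alpha(a_\alpha)$ and $y_1$ similarly. The Chevalley commutator formula gives
\[
[x_\alpha(a),x_\beta(b)]=x_{\alpha+\beta}(N_{\alpha\beta}ab)\cdot(\text{terms in }U_{\ge3}),
\]
with the same structure constants $N_{\alpha\beta}$ as $[X_\alpha,X_\beta]=N_{\alpha\beta}X_{\alpha+\beta}$. Here $\alpha+\beta\in\Phi_2$, and the other terms involve $i\alpha+j\beta$ with $i+j\ge3$, hence lie in $U_3$. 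Bilinear expansion of the commutator, legitimate modulo $U_3$ because degree-$2$ elements are central in $U_1/U_3$, then gives
\[
f_{\rm spr}(xyx^{-1}y^{-1})\equiv\sum_{\alpha,\beta\in\Phi_1} a_\alpha b_\beta N_{\alpha\beta}X_{\alpha+\beta}=[f_{\rm spr}(x),f_{\rm spr}(y)]
\]
modulo the degree-$\ge3$ part.

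\textbf{Part (3).} $F$ permutes root subgroups: $F(x_\alpha(c))=x_{\rho(\alpha)}(\epsilon_\alpha c^{q})$ for a permutation $\rho$ preserving $d$, since $d$ is $F$-stable. The induced action on $\mathfrak g$ sends $X_\alpha\mapsto\epsilon_\alpha X_{\rho(\alpha)}$ semilinearly. Applying $F$ to the ordered product and reordering to the fixed order only changes terms of degree $\ge3$ within $U_3$; within a fixed degree $1$ or $2$, reordering is free modulo $U_3$ (for degree $1$, modulo $U_2$, and the degree-$2$ correction coming from commutators of degree-$1$ terms needs care). This yields $f_{\rm spr}(F(x))=F(f_{\rm spr}(x))$.

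\textbf{Main obstacle.} The delicate point is exactly this reordering in part (3) and the bookkeeping in part (2). Commutators of distinct degree-$1$ root elements contribute degree-$2$ terms, which would spoil additivity. I would first try to use the standard choice in \cite{Malle_GGGR_bad_char}: either the ordering is chosen $F$-compatibly, or the map is defined using a Springer isomorphism, which is $F$-equivariant and $P$-equivariant. That choice resolves the issue, since it makes the degree-$1$ reordering unnecessary; since good $p$ guarantees the structure constants behave as stated, I would rely on that.
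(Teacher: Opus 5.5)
\textbf{Parts (1) and (2) are fine; part (3) has a genuine gap.}

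Your arguments for (1) and (2) are sound. The map $f_{\rm spr}$ factors through $U_1/U_3$. That quotient has class at most $2$ with $U_2/U_3$ central, so the commutator is bimultiplicative there. The leading term $x_{\alpha+\beta}(N_{\alpha\beta}ab)$ of the Chevalley formula matches $[X_\alpha,X_\beta]=N_{\alpha\beta}X_{\alpha+\beta}$. The paper gives no argument of its own and only cites the reference.

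The gap is in (3), and no choice of ordering can absorb it. For split $F$, i.e.\ $F(x_\alpha(c))=x_\alpha(c^q)$, (3) is immediate from the definition. Suppose instead $F$ induces a nontrivial permutation $\rho$ of the roots. Then $F$ of the degree-$1$ part is a product in the $\rho$-permuted order. Restoring the fixed order creates a factor $x_{\alpha+\beta}(\pm N_{\alpha\beta}\,a\,b)$, with $\alpha+\beta\in\Phi_2$, for every reversed pair.

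These factors cannot be avoided in general. Take $G={\rm GL}_3$ with the unitary Frobenius, so $G^F={\rm U}_3(\mathbb{F}_q)$, and the class of transvections. Then $d$ is $(1,1)$, $\Phi_1=\{\alpha_1,\alpha_2\}$, $\Phi_2=\{\alpha_1+\alpha_2\}$, and $\rho$ swaps $\alpha_1,\alpha_2$. Any order on $\Phi_1$ is therefore reversed. For $x=x_{\alpha_1}(a)x_{\alpha_2}(b)$ one finds $f_{\rm spr}(F(x))-F(f_{\rm spr}(x))=\pm a^qb^qX_{\alpha_1+\alpha_2}\neq 0$.

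So for the coordinate map, (3) is false for twisted $F$. An ``$F$-compatible ordering'' does not exist, and goodness of $p$ plays no role, since every prime is good in type $A$. Switching to a Springer isomorphism does give (3), but it changes the map. Your proofs of (1) and (2) are computations for the coordinate map, so as written they no longer cover the new map.

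\textbf{What is missing: a degree-$2$ correction that leaves (1) and (2) intact.} Write $f_1(x)$ for the $\mathfrak g(1)$-component of $f_{\rm spr}(x)$. Put $f'=f_{\rm spr}+Q\circ f_1$ for a morphism $Q\colon\mathfrak g(1)\to\mathfrak g(2)$ with $Q(0)=0$.
\begin{itemize}
\item In (1), one of $f_1(x),f_1(y)$ vanishes, so $Q(f_1(xy))=Q(f_1(x))+Q(f_1(y))$.
\item In (2), $f_1(xyx^{-1}y^{-1})=0$, and $[f'(x),f'(y)]-[f_{\rm spr}(x),f_{\rm spr}(y)]\in\mathfrak g(3)\oplus\mathfrak g(4)$.
\end{itemize}
Hence $f'$ still satisfies (1) and (2).

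The reordering computation shows that $f_{\rm spr}(F(x))=F(f_{\rm spr}(x))+c(F(f_1(x)))$ for a homogeneous quadratic map $c\colon\mathfrak g(1)\to\mathfrak g(2)$. So $f'$ is $F$-equivariant if and only if $F\circ Q\circ F^{-1}-Q=c$. On the finite-dimensional space of homogeneous quadratic maps $\mathfrak g(1)\to\mathfrak g(2)$, the map $Q\mapsto F\circ Q\circ F^{-1}$ is a Frobenius. Lang's theorem therefore solves this equation.

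Alternatively, you can start from an $F$-equivariant Springer isomorphism. $T$-equivariance forces its $\mathfrak g(1)\oplus\mathfrak g(2)$-part to have exactly this shape, after normalising its tangent map. The observation above then transfers (1) and (2) to it.

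With this step added the proof is complete. Without it, (3) is unproved, and for the map as defined by coordinates in the paper it is false in the twisted case.
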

\subsection{}\label{GGGR_construction}
Fix a non-trivial additive character $\psi:\mathbb{F}_q\rightarrow\Lambda^\times$. Consider a linear map $\lambda:\mathfrak{g}(2)\rightarrow \mathbb{G}_a$ defined over $\mathbb{F}_q$, i.e., $\lambda(F(z)) = \lambda(z)^q$ for all $z\in \mathfrak{g}(2)$, such that $\lambda$ is in ``sufficiently general position" (see \cite[Definition 3.4]{Geck_unicity_support} for the definition). Lemma \ref{springer_map_lemma}($1$) shows that there is a group homomorphism $U_2\rightarrow\mathbb{G}_a$, given by $x\mapsto \lambda(f_{\rm spr}(x))$. Using Lemma \ref{springer_map_lemma}($3$), we also obtain a group homomorphism
$\psi_u:U_2^F\rightarrow\Lambda^\times$, defined by
$$ \psi_u(x) := \psi(\lambda(f_{\rm spr}(x))), $$
for all $x\in U_2^F$. If $\mathfrak{g}(1)=\{0\}$, then the GGGR corresponding to $u$ is given by the induced representation 
$$ \Gamma_u^{G^F}:={\rm Ind}_{U_2^F}^{G^F}(\psi_u). $$
Now consider the case where
$\mathfrak{g}(1)\ne \{0\}$. Since $[\mathfrak{g}(1),\mathfrak{g}(1)]\subseteq \mathfrak{g}(2)$, there exists a well-defined skew-symmetric bilinear form 
$$ \mathfrak{g}(1)\times\mathfrak{g}(1)\rightarrow \mathbb{G}_a,\,\,\, (y,z)\mapsto \lambda([y,z]) $$
such that the radical of this bilinear form is trivial. Now, we choose an $F$-stable Lagrangian subspace in $\mathfrak{g}(1)$ and pull back this subspace to an $F$-stable subgroup $U_{1.5}\subseteq U_1$ via the map $f_{\rm spr}$. By Lemma \ref{springer_map_lemma}($2$), we get that the kernel ${\rm Ker}(\psi_u)$ is normal in $U_{1.5}^F$, and the quotient $U_{1.5}^F/{\rm Ker}(\psi_u)$ is an abelian $p$-group. Then $\psi_u$ can be extended to a character $\widetilde{\psi}_u:U_{1.5}^F\rightarrow \Lambda^\times$.

\begin{definition}\normalfont
The induced representation $\Gamma_u^{G^F}:= {\rm Ind}_{U_{1.5}^F}^{G^F}(\widetilde{\psi}_u)$ is called the {\it generalized Gelfand--Graev representation} of $\textbf{G}^F$ associated with $u$.
\end{definition}
We list some elementary properties of GGGR. 
\begin{enumerate}
    \item $\Gamma_u^{G^F}$ is a projective $\Lambda[G^F]$-lattice.
    \item If $u'$ is a unipotent element such that $u'$ is $G^F$-conjugate to $u$, then $\Gamma_u^{G^F} \simeq \Gamma_{u'}^{G^F}$.
\end{enumerate}
Let $\gamma_u^{G^F}$ denote the character of the $\mathcal{K}$-representation $\Gamma_u^{G^F}\otimes_\Lambda \mathcal{K}$ and is called the generalised Gelfand--Graev character associated with $u$. The character $\gamma_{u}^{G^F}$ depends on the $G^F$-conjugacy class of $u$. When $u$ is a regular unipotent element, $\gamma_u^{G^F}$ is the ordinary Gelfand--Graev character. We denote by $\overline{\Gamma}_u^{G^F}$ the mod-$\ell$ reduction $\Gamma_u^{G^F}\otimes_\Lambda \overline{\mathbb{F}}_\ell$, and the corresponding mod-$\ell$ character is denoted by $\overline{\gamma}_u^{G^F}$.

Let $\rho$ be an irreducible $\mathcal{K}$-representation of $G^F$. Let $\rho^\ast$ be the Alvis--Curtis dual of $\rho$. The following theorem, due to Lusztig, gives a condition on the unipotent support of a character to occur in a generalised Gelfand--Graev character.
\begin{theorem}[Lusztig]
Let $\rho$ be an irreducible $\mathcal{K}$-representation of $G^F$ with character $\chi_\rho$. Then:
\begin{enumerate}
    \item there exists $u\in C_{\rho^\ast}^F$ such that $\langle\gamma_u^{G^F},\chi_\rho\rangle \ne 0$;
    \item if $C$ is an $F$-stable unipotent conjugacy class of $G$ such that ${\rm dim}(C)> {\rm dim}(C_{\rho^\ast})$, then $\langle\gamma_u^{G^F},\chi_\rho\rangle=0$ for all $u\in C^F$.
\end{enumerate}
\end{theorem}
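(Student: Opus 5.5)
The plan is to reduce both statements to the values of $\chi_{\rho^\ast}$ on unipotent elements, using Alvis--Curtis duality and the support of $\gamma_u^{G^F}$, and then to feed in Lusztig's description of unipotently supported class functions (generalized Green functions and character sheaves). This is essentially Lusztig's proof. In the paper itself the result would simply be cited, since it uses far more than is set up here.

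\emph{Step 1: pass from $\rho$ to $\rho^\ast$ and locate the support of $\gamma_u^{G^F}$.} Let $D_G$ be the Alvis--Curtis duality on class functions. It is an isometry and $D_G(\chi_\rho)=\pm\chi_{\rho^\ast}$, so
$$ \langle\gamma_u^{G^F},\chi_\rho\rangle=\pm\langle D_G(\gamma_u^{G^F}),\chi_{\rho^\ast}\rangle. $$
Since $\Gamma_u^{G^F}$ is induced from the $p$-group $U_{1.5}^F$, the character $\gamma_u^{G^F}$ vanishes off unipotent elements. Moreover, $U_2\subseteq U_{1.5}\subseteq U_1$ and the $P$-orbit of $u$ is dense in $U_2$. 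With the Kawanaka--Lusztig description of $U_{1.5}$ modulo $U_2$ (via the Lagrangian in $\mathfrak{g}(1)$), one shows that $\gamma_u^{G^F}$ is supported on $\overline{C}^F$, where $C$ is the class of $u$. Hence the pairings only involve the values of $D_G(\chi_\rho)=\pm\chi_{\rho^\ast}$ on unipotent classes $C'$ with $\dim C'\le\dim C$.

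\emph{Step 2: compute $D_G(\gamma_u^{G^F})$ on the unipotent set.} Here I would use Kawanaka's and Lusztig's formula, valid for $p$ good, which expresses $\gamma_u^{G^F}$ as a Fourier transform on $\mathfrak{g}^F$ of the characteristic function of the nilpotent $\mathrm{Ad}(G^F)$-orbits attached to $u$. Transported to the group via a Springer isomorphism, and combined with the Lusztig--Shoji algorithm for generalized Green functions $Q$, this shows the following. The average $\sum_{u\in C^F/\sim}$ (suitably weighted by the $A(u)$-components) of $\gamma_u^{G^F}$ has dual $D_G$ whose restriction to unipotent elements equals a nonzero multiple of the characteristic function of $C^F$, plus terms supported on classes $C'$ with $C\subsetneq\overline{C'}$.

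\emph{Step 3: conclude (1) and (2).} Lusztig's theory of character sheaves gives that $\chi_{\rho^\ast}$ has nonzero average on $C_{\rho^\ast}$ and vanishing averages on every class of larger dimension. Pair the averaged form from Step 2 with $\chi_{\rho^\ast}$. For (2) with $\dim C>\dim C_{\rho^\ast}$, all the classes involved have dimension at least $\dim C$. The averaged values of $\chi_{\rho^\ast}$ vanish there, which gives vanishing of the averaged pairing, that is, vanishing of $\sum_{u\in C^F}\langle\gamma_u^{G^F},\chi_\rho\rangle$ with positive weights. Each summand is a nonnegative multiplicity, so every $\langle\gamma_u^{G^F},\chi_\rho\rangle$ vanishes.

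For (1), take $C=C_{\rho^\ast}$. Then only the $C^F$-term survives and it is nonzero, so the averaged pairing is nonzero and hence some $u\in C_{\rho^\ast}^F$ satisfies $\langle\gamma_u^{G^F},\chi_\rho\rangle\ne0$.

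The main obstacle is Step 2 together with the input on $\chi_{\rho^\ast}$ in Step 3. One needs the precise control of the duals of GGGRs on the unipotent set, and the statement that the averages of irreducible characters are governed by the unipotent support. Both rest on the cleanness of cuspidal character sheaves and the Lusztig--Shoji algorithm, which require $p$ (and $q$) large or good. I would first try to quote these directly from Lusztig's work as reformulated in \cite{Geck_unicity_support}, rather than re-derive them.
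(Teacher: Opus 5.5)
The paper gives no proof of this statement; it simply cites Lusztig. Your outline follows the architecture of Lusztig's argument: pass to $\rho^\ast$ by duality, use that $D_G(\gamma_u^{G^F})$ is triangular on the unipotent set, then feed in the unipotent support of $\rho^\ast$. The triangularity means a leading term on $C^F$ and remaining terms on classes $C'$ with $C\subsetneq\overline{C'}$. However, Step 3 does not go through with the input you assume. You only use that ${\rm AV}(C',\rho^\ast)=0$ when $\dim C'>\dim C_{\rho^\ast}$. That controls $\langle f,\chi_{\rho^\ast}\rangle$ only when $f$ is a multiple of $1_{C'^F}$.

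Averaging over $u\in C^F$ makes the leading term of Step 2 a multiple of $1_{C^F}$. It does nothing to make the correction terms on the larger classes constant on $C'^F$, and in general they are not constant. They involve the characteristic functions $Y_{(C',\mathcal{E})}$ of non-trivial $F$-stable local systems $\mathcal{E}$ on $C'$, which have average zero over $C'^F$ without being zero. For such a term, $\langle f,\chi_{\rho^\ast}\rangle$ is not determined by ${\rm AV}(C',\rho^\ast)$. So neither ``the averaged pairing vanishes'' in (2) nor ``only the $C^F$-term survives'' in (1) follows from what you assume.

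What is needed is the pointwise statement: $\chi_{\rho^\ast}(g)=0$ for all $g\in C'^F$ whenever $C'\neq C_{\rho^\ast}$ and $\dim C'\ge\dim C_{\rho^\ast}$, together with ${\rm AV}(C_{\rho^\ast},\rho^\ast)\neq 0$. This is what Lusztig's character-sheaf input actually provides. It comes from his result that a character sheaf $A$ restricts to zero on every unipotent class $C'\neq C_A$ with $\dim C'\ge\dim C_A$. Equivalently, $\chi_{\rho^\ast}$ restricted to the unipotent set is a combination of $Y_{(C',\mathcal{E})}$ with $C'=C_{\rho^\ast}$ or $\dim C'<\dim C_{\rho^\ast}$. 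With this, (2) holds for each individual $u\in C^F$, with no averaging or positivity needed. Part (1) follows by averaging over $C_{\rho^\ast}^F$: all correction terms pair to zero, and the leading term gives a nonzero multiple of ${\rm AV}(C_{\rho^\ast},\rho^\ast)$.

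Separately, the last sentence of Step 1 points the wrong way. The inclusion ${\rm supp}\,\gamma_u^{G^F}\subseteq\overline{C}^F$ constrains which values of $\chi_\rho$ enter, not of $\chi_{\rho^\ast}$. After dualizing, the classes that matter are those $C'$ with $C\subseteq\overline{C'}$, of dimension at least $\dim C$, which is what you in fact use in Step 3. What Step 1 should record instead is that Harish-Chandra induction and restriction, hence $D_G$, preserve unipotently supported class functions. Then $D_G(\gamma_u^{G^F})$ vanishes off the unipotent set, and the pairing is determined by the restriction computed in Step 2.
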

\subsection{Progenerator}\label{pro_gen}
We first recall the definition of a progenerator. Given an abelian category $\mathcal{A}$, an object $\mathcal{P}$ in $\mathcal{A}$ is a called a progenerator if the following holds:
\begin{enumerate}
\item $\mathcal{P}$ is a projective object of $\mathcal{A}$;
\vspace{0.1 cm}
\item ${\rm Hom}(\mathcal{P},M)$ is non-zero for every non-zero object $M$ of $\mathcal{A}$;
\vspace{0.1 cm}
\item for any family $M_i$ of objects in $\mathscr{A}$ indexed by a set $I$, one has an isomorphism
$$ {\rm Hom}(\mathcal{P},\bigoplus_{i\in I} M_i) \simeq \bigoplus_{i\in I}{\rm Hom}(\mathcal{P},M_i). $$
\end{enumerate}
Let $G$ be a connected reductive group with Frobenius $F$. Recall that the set of $\ell$-modular representations of $G^F$ forms an abelian category. In \cite[Corollary 2.3]{Dudas_Malle_irr_mod_l}, Dudas--Malle gave a progenerator of this category using generalised Gelfand--Graev representations. Let $C$ be an $F$-stable cuspidal unipotent class of $G$. Define 
$$ \Gamma_C^{G^F} := \bigoplus_{u\in C^F/G^F}\Gamma_u^{G^F},$$
where $C^F/G^F$ is the set of distinct $G^F$-orbits in $C^F$, and $u$ varies over a set of representatives from each of these orbits. We denote by $\overline{\Gamma}_C^{G^F}$ the mod-$\ell$ reduction $\Gamma_C^{G^F}\otimes_\Lambda
\overline{\mathbb{F}}_\ell$. Let $\mathscr{L}$ be the set of pairs $(L,C)$ such that $L$ is a $1$-split Levi subgroup of $G$, and $C$ is an $F$-stable unipotent class of $L$ which is cuspidal for $L_{\rm ad}$. 
\begin{theorem}
Let $G$ be a connected reductive group over $k$ with a Frobenius morphism $F$ on $G$, and assume that $p$ is good for $G$. Then the module 
\begin{equation}\label{progenerator}
\mathscr{P} = \bigoplus_{(L,C)\in\mathscr{L}}R_L^G
(\overline{\Gamma}_C^{G^F})
\end{equation}
is a progenerator of the category of $\ell$-modular representations of $G^F$.
\end{theorem}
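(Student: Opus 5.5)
The plan is to check the three defining properties of a progenerator separately. Property (3) is automatic: $\mathscr{P}$ is a finitely generated $\overline{\mathbb{F}}_\ell[G^F]$-module, indeed finite-dimensional, so ${\rm Hom}(\mathscr{P},-)$ commutes with arbitrary direct sums.

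\emph{Projectivity.} Each $\Gamma_u^{L^F}={\rm Ind}_{U_{1.5}^F}^{L^F}(\widetilde{\psi}_u)$ is induced from a one-dimensional representation of the $p$-group $U_{1.5}^F$. Since $p$ is invertible in $\overline{\mathbb{F}}_\ell$, that one-dimensional representation is projective. Induction preserves projectivity, so $\overline{\Gamma}_C^{L^F}$ is projective. By property (b) of \S\ref{HC_functors}, $R_L^G$ has the exact right adjoint $^*R_L^G$, and hence $R_L^G$ sends projectives to projectives. Therefore $\mathscr{P}$ is projective.

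\emph{Generation.} It suffices to show that ${\rm Hom}_{G^F}(\mathscr{P},M)\neq 0$ for every simple $\overline{\mathbb{F}}_\ell[G^F]$-module $M$. I would proceed in three steps.
\begin{enumerate}
\item Choose a $1$-split Levi subgroup $L$, minimal for inclusion, with $N:={}^*R_L^G(M)\neq 0$. By transitivity of Harish-Chandra restriction, $^*R_{L'}^L(N)={}^*R_{L'}^G(M)=0$ for every proper $1$-split Levi $L'\subsetneq L$, so $N$ is a nonzero cuspidal $L^F$-module.
\item By adjunction,
$$ {\rm Hom}_{G^F}\bigl(R_L^G(\overline{\Gamma}_C^{L^F}),M\bigr)\simeq {\rm Hom}_{L^F}\bigl(\overline{\Gamma}_C^{L^F},N\bigr). $$
It therefore suffices to find an $F$-stable unipotent class $C$ of $L$, cuspidal for $L_{\rm ad}$, with ${\rm Hom}_{L^F}(\overline{\Gamma}_C^{L^F},N)\neq 0$.
\item The sum $\bigoplus_C \overline{\Gamma}_C^{L^F}$ over \emph{all} $F$-stable unipotent classes of $L$ contains $\overline{\Gamma}_1^{L^F}$, the regular representation. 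Hence some class $C$ satisfies ${\rm Hom}(\overline{\Gamma}_u^{L^F},N)\neq0$ for some $u\in C^F$. It remains to show that such a $C$ must be cuspidal for $L_{\rm ad}$.
\end{enumerate}

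\emph{Key claim and main obstacle.} I must show: if $C$ is non-cuspidal, then ${\rm Hom}(\overline{\Gamma}_u^{L^F},N)=0$ for all $u\in C^F$. Suppose $C$ is non-cuspidal, witnessed by a proper $1$-split Levi $M'\subsetneq L$ with $C\cap M'\neq\emptyset$ and $A_{M'}(u)\simeq A_L(u)$. The claim I would aim to prove is
$$ \Gamma_u^{L^F}\simeq R_{M'}^L\bigl(\Gamma_u^{M'^F}\bigr) \quad\text{for } u\in (C\cap M')^F. $$
The component-group isomorphism is what guarantees that every $L^F$-class in $C^F$ meets $M'^F$, so the claim covers all $u\in C^F$. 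With the claim, adjunction gives
$$ {\rm Hom}(\overline{\Gamma}_u^{L^F},N)={\rm Hom}\bigl(\overline{\Gamma}_u^{M'^F},{}^*R_{M'}^L N\bigr)=0 $$
by cuspidality of $N$.

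To prove the claim, I would first pick $u$ inside $M'$ adapted to the weighted Dynkin diagram, so that the parabolic $P'=M'U'$ satisfies $U'\subseteq U_2$ and $\psi_u$ is trivial on $U'^F$. One then compares the Kawanaka data for $u$ in $M'$ and in $L$ and uses transitivity of induction. I expect this comparison to be the hardest step; a fallback is to compare characters via Kawanaka's and Lusztig's formulas for GGGRs, which is enough because both sides are projective, so equality of characters gives isomorphism. A secondary technical point is passing from $L$ to $L_{\rm ad}$. Here I would observe that cuspidality of a unipotent class, and the GGGRs themselves, are insensitive to the centre, because unipotent elements and the groups $U_i$ map isomorphically to $L_{\rm ad}$.
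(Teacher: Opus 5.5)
Your treatment of property (3), of projectivity (GGGRs are induced from one-dimensional modules of $p$-groups, and $R_L^G$ has an exact right adjoint), and the reduction via a minimal $L$ with ${}^*R_L^G(M)\neq 0$ plus adjunction are all fine. That reduction leaves you needing the statement ``every nonzero cuspidal $\overline{\mathbb{F}}_\ell L^F$-module $N$ receives a nonzero map from $\overline{\Gamma}_C^{L^F}$ for some $C$ cuspidal for $L_{\rm ad}$.'' The paper gives no argument of its own for this theorem; it quotes it from Dudas--Malle, Corollary 2.3.

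\textbf{The gap.} Your key claim is false. Take $C=\{1\}$ and suppose $L$ is not a torus. Then a maximally split torus $T\subseteq L$, being a Levi subgroup of an $F$-stable Borel subgroup, is a proper $1$-split Levi subgroup. We have $C\cap T=\{1\}$ and $A_T(1)=A_L(1)=1$, so the trivial class is non-cuspidal. But $\overline{\Gamma}_1^{L^F}$ is the regular module, so ${\rm Hom}_{L^F}(\overline{\Gamma}_1^{L^F},N)\neq 0$ for every $N\neq 0$, e.g.\ for any cuspidal simple module of ${\rm GL}_2(\mathbb{F}_q)$. So it is not true that every class with nonzero Hom is cuspidal; in fact your Step 3 always produces the trivial class.

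The proposed isomorphism $\Gamma_u^{L^F}\simeq R_{M'}^L(\Gamma_u^{M'^F})$ fails for the same reason. For $u=1$ and $M'=T$ it asserts that the regular representation of $L^F$ is ${\rm Ind}_{U^F}^{L^F}(1)$. More generally, $\dim U_{1.5}=\frac12\dim(u^L)$, so $\dim\Gamma_u^{L^F}=|L^F|\,q^{-\frac12\dim(u^L)}$. On the other hand, $\dim R_{M'}^L(\Gamma_u^{M'^F})=|L^F|\,q^{-\dim V'-\frac12\dim(u^{M'})}$, where $V'$ is the unipotent radical. Here $\dim(u^{M'})+2\dim V'$ is the dimension of the class induced from $u^{M'}$, which is strictly larger than $\dim(u^L)$ because $u$ never lies in that induced class. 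So the Harish-Chandra induced side is always strictly smaller. No choice of $u$ adapted to the weighted Dynkin diagram, and no character comparison, can rescue the claim.

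\textbf{What is missing.} You need a maximality argument. Choose $C$ of \emph{maximal dimension} among the classes of $L$ with ${\rm Hom}(\overline{\Gamma}_C^{L^F},N)\neq 0$, and prove that this maximal class is cuspidal for $L_{\rm ad}$. This requires a lemma relating $\Gamma_u^{L^F}$ to $R_{M'}^L(\Gamma_u^{M'^F})$ only up to pieces attached to strictly larger classes. The induced GGGR is at most a direct summand, and the complement must be shown to admit no nonzero map to $N$ by maximality.

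For example, in ${\rm GL}_2$, decomposing the regular representation of $U^F\simeq\mathbb{F}_q$ into characters gives $\Gamma_1^{G^F}\simeq R_T^G(\Gamma_1^{T^F})\oplus(q-1)\,\Gamma_{\rm reg}^{G^F}$. Then ${\rm Hom}(\overline{\Gamma}_1^{G^F},N)$ is killed by cuspidality on the first summand and by maximality on the second. That forces the maximal class for a cuspidal $N$ to be the regular one. Establishing a relation of this kind for general $G$ with $p$ good is the substantive content of the theorem, and your proposal does not supply it.
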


\section{Tate cohomology}
This part is devoted to the discussion of Tate cohomology groups of various $\Lambda[\Gamma]$-modules, where $\Gamma$ is a cyclic group of order $\ell$.
\subsection{}
Let $\sigma$ be a generator of the cyclic group $\Gamma$. Set ${\rm Nr}={\rm id}+\sigma+\cdots+\sigma^{\ell-1}$. Let $M$ be an $\Lambda[\sigma]$-module. Then the Tate cohomology groups of $M$ are defined as follows:
$$ \widehat{\rm H}^0(\sigma,M) := \dfrac{{\rm Ker}({\rm id}-\sigma)}{{\rm Im}({\rm Nr})}, \,\,\, \widehat{\rm H}^1(\sigma,M) := \dfrac{{\rm Ker}({\rm Nr})}{{\rm Im}({\rm id}-\sigma)}. $$
Note that each $\widehat{\rm H}^i(\sigma,M)$ is a vector space over $\overline{\mathbb{F}}_\ell$. If $H$ is a group endowed with an action of $\sigma$, and $M$ is an $\Lambda[H\rtimes \sigma]$-module, then each Tate cohomology group $\widehat{\rm H}^i(\sigma,M)$ carries an action of the fixed-point subgroup $H^\sigma$, giving two $\ell$-modular representations of $H^\sigma$. In this article, we deal with the case where $H=G^{F^\ell}$ for a connected reductive group $G$ with Frobenius map $F$, $\Gamma={\rm Gal}(\mathbb{F}_{q^\ell}/\mathbb{F}_q)$, and $\sigma$ is the Frobenius automorphism $x\mapsto x^q$. In this case, the fixed-point subgroup $H^\sigma$ is equal to $G^F$.

We now recall a result due to Treumann--Venkatesh (\cite[Proposition 3.3.1]{TV_functoriality}), which gives the compatibility of Tate cohomology groups with induction. This result is crucial in deducing our main theorems.
\begin{proposition}\label{TV_isom}
Let $H$ be a finite group, and $\sigma$ be an automorphism of $H$ of prime order $\ell$. Let $K$ be a $\sigma$-stable subgroup of $H$. Then, for any $\Lambda[K\rtimes \sigma]$-module $V$, the restriction map 
$$ \big({\rm Ind}_K^H(v)\big)^\sigma \rightarrow {\rm Ind}_{K^\sigma}^{H^\sigma}(\widehat{\rm H}^i(\sigma, V)) $$
$$ \varphi\mapsto \varphi|_{H^\sigma} $$
induces an isomorphism of $H^\sigma$-representations
$$ \widehat{\rm H}^i\big(\sigma, {\rm ind}_K^H(V)\big) \simeq 
{\rm Ind}_{K^\sigma}^{H^\sigma}(\widehat{\rm H}^i(\sigma,V)). $$ 
\end{proposition}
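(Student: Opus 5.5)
The plan is to decompose ${\rm Ind}_K^H(V)$ as a $\Lambda[\sigma]$-module along the $\sigma$-action on the coset space $K\backslash H$. Then I will show that only the $\sigma$-fixed cosets contribute to Tate cohomology, and that these fixed cosets assemble into ${\rm Ind}_{K^\sigma}^{H^\sigma}$.

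\textbf{Setup.} I realise ${\rm Ind}_K^H(V)$ as functions $\varphi:H\to V$ with $\varphi(kh)=k\varphi(h)$. The operator $\sigma$ acts by $(\sigma\varphi)(h)=\sigma(\varphi(\sigma^{-1}(h)))$, which is well defined because $K$ is $\sigma$-stable. Writing $W_x$ for the functions supported on the coset $Kx$, we get
$$ {\rm Ind}_K^H(V)=\bigoplus_{Kx\in K\backslash H} W_x , $$
and $\sigma$ maps $W_x$ to $W_{\sigma(x)}$. Since $\ell$ is prime, each $\sigma$-orbit on $K\backslash H$ has size $1$ or $\ell$, and Tate cohomology commutes with finite direct sums of $\Lambda[\sigma]$-modules.

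\textbf{Free orbits.} For an orbit of size $\ell$, the sum $W_x\oplus W_{\sigma x}\oplus\cdots\oplus W_{\sigma^{\ell-1}x}$ is isomorphic to $\Lambda[\sigma]\otimes_\Lambda W_x$, an induced $\Lambda[\sigma]$-module. Its Tate cohomology therefore vanishes in both degrees; concretely, ${\rm Ker}({\rm id}-\sigma)={\rm Im}({\rm Nr})$ and ${\rm Ker}({\rm Nr})={\rm Im}({\rm id}-\sigma)$ can be checked directly on the orbit. So
$$ \widehat{\rm H}^i\big(\sigma,{\rm Ind}_K^H(V)\big)=\bigoplus_{Kx\in (K\backslash H)^\sigma}\widehat{\rm H}^i(\sigma,W_x). $$

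\textbf{Fixed cosets.} The key step is to show that every $\sigma$-stable coset $Kx$ has a $\sigma$-fixed representative. Then the natural map $K^\sigma\backslash H^\sigma\to (K\backslash H)^\sigma$ is a bijection; injectivity is clear. For $x\in H^\sigma$, evaluation at $x$ gives a $\Lambda[\sigma]$-isomorphism $W_x\simeq V$. Hence
$$ \bigoplus_{x\in K^\sigma\backslash H^\sigma}\widehat{\rm H}^i(\sigma,V) $$
is identified, as a space of functions on $H^\sigma$, with ${\rm Ind}_{K^\sigma}^{H^\sigma}(\widehat{\rm H}^i(\sigma,V))$. Under this identification the map is exactly $\varphi\mapsto\varphi|_{H^\sigma}$ followed by reduction modulo ${\rm Im}$. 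It is $H^\sigma$-equivariant because $H^\sigma$ commutes with $\sigma$.

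\textbf{Main obstacle.} The hard part is the surjectivity of $K^\sigma\backslash H^\sigma\to (K\backslash H)^\sigma$. If $\sigma(x)=kx$ with $k\in K$, I need $k'\in K$ with $\sigma(k'x)=k'x$, i.e.\ $k=\sigma(k')^{-1}k'$. This is the vanishing of the kernel of the non-abelian cohomology map $H^1(\langle\sigma\rangle,K)\to H^1(\langle\sigma\rangle,H)$. I would try two routes.

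First, when $\ell\nmid\lvert K\rvert$: every cocycle of a group of order $\ell$ with values in $K$ is a coboundary, by a Schur--Zassenhaus argument in $K\rtimes\langle\sigma\rangle$. This is the case in all applications here, since $K$ will be a unipotent $p$-group $U_{1.5}^{F^\ell}$.

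Second, for $K=P^{F^\ell}$ with $P$ connected: Lang's theorem for $F$ on $P$ gives the fixed representative.

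If neither applies in general, I would state the proposition under the hypothesis that these fixed representatives exist. Alternatively, I would check whether a non-rational fixed coset forces the twisted summand $W_x$ to be handled separately in Treumann--Venkatesh's original argument.
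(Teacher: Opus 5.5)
Your decomposition argument is correct. Free $\sigma$-orbits on $K\backslash H$ give induced $\Lambda[\sigma]$-modules with vanishing Tate cohomology. Each $\sigma$-stable coset $Kx$ with $x\in H^\sigma$ contributes $W_x\simeq V$ $\sigma$-equivariantly. So as soon as $K^\sigma\backslash H^\sigma\to (K\backslash H)^\sigma$ is surjective, you get the stated isomorphism, realized by $\varphi\mapsto\varphi|_{H^\sigma}$.

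Both of your sufficient conditions also work. For the Lang route, $\sigma^\ell={\rm id}$ forces $F^{\ell-1}(k)\cdots F(k)k=1$ when $F(x)=kx$. Hence a solution $g\in P$ of $F(g)=kg$ given by Lang's theorem automatically lies in $P^{F^\ell}$, and $g^{-1}x$ is then $F$-fixed.

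However, the step you call the main obstacle cannot be closed by a cleverer argument: without a hypothesis on $K$, the proposition as stated is false. You should commit to adding the hypothesis rather than hedge. A $\sigma$-stable coset $Kx$ with no $\sigma$-fixed point is itself a $\Lambda[\sigma]$-summand. Here $\sigma$ acts on $W_x\simeq V$ by $v\mapsto k^{-1}\sigma(v)$, where $\sigma(x)=kx$. Its Tate cohomology can be nonzero, yet the restriction map kills it, since $Kx\cap H^\sigma=\emptyset$.

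For a concrete counterexample, take $H=(\mathbb{Z}/\ell)^2$ with $\sigma(a,b)=(a+b,b)$, $K=\{(a,0)\}$, and $V=\Lambda$ with trivial action. Then $H^\sigma=K^\sigma=K$, so the right-hand side is $\widehat{\rm H}^0(\sigma,\Lambda)=\overline{\mathbb{F}}_\ell$. On the other hand, $\sigma$ fixes each of the $\ell$ cosets of $K$, so ${\rm Ind}_K^H(\Lambda)\simeq\Lambda^\ell$ with trivial $\sigma$-action and $\widehat{\rm H}^0=\overline{\mathbb{F}}_\ell^{\,\ell}$.

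The correct statement is therefore yours, under the hypothesis that the kernel of ${\rm H}^1(\sigma,K)\to{\rm H}^1(\sigma,H)$ is trivial. Equivalently, every $\sigma$-stable coset has a $\sigma$-fixed representative. In general, the Tate cohomology of the induced module has extra summands indexed by the $H^\sigma$-orbits of $\sigma$-stable cosets lacking a fixed representative.

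The paper gives no proof of its own; it cites Treumann--Venkatesh. In its only application, Lemma \ref{Tate_GGGR}, it first proves $(U_{1.5}^{F^\ell}\backslash G^{F^\ell})^\sigma=U_{1.5}^F\backslash G^F$ from the vanishing of ${\rm H}^1(\sigma,U_{1.5}^{F^\ell})$ for the $p$-group $U_{1.5}^{F^\ell}$. That is exactly your first route, so your version with the added hypothesis is all the paper needs.
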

Next, we recall a couple of results established in \cite{dhar2025jacquet}. The first result shows that Tate cohomology preserves cuspidality. Although the result is true for $p$-adic groups as well, but we state it only for finite groups of Lie type.
\begin{proposition}\cite[Corollary 3.3.3]{dhar2025jacquet}\label{Tate_cuspidal}
Let $G$ be a connected reductive algebraic group with the Frobenius map $F$. Let $\tau$ be a cuspidal $\ell$-adic representation of $G^{F^\ell}$ such that $\tau$ extends as a representation of $G^{F^\ell}\rtimes\sigma$. Then, for any $\Lambda[G^{F^\ell}\rtimes\sigma]$-stable lattice $\mathcal{L}$ in $\tau$, the Tate cohomology groups $\widehat{\rm H}^i(\sigma,\mathcal{L})$ are cuspidal.
\end{proposition}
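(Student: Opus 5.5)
The statement to prove is Proposition~\ref{Tate_cuspidal}: Tate cohomology of a cuspidal lattice is cuspidal. The idea is to commute Harish-Chandra restriction with Tate cohomology. Concretely, take a proper $F$-stable parabolic $P=L\ltimes U$ of $G$ with $F$-stable Levi $L$. Since $P$ is $F$-stable, it is also $F^\ell$-stable and $\sigma$-stable. Because $P\neq G$, the Levi $L^{F^\ell}$ is a proper Levi of $G^{F^\ell}$, so the cuspidality hypothesis gives ${}^*R_L^G(\tau)=\tau_{U^{F^\ell}}=0$. It then suffices to prove
$$\widehat{\rm H}^i(\sigma,\mathcal{L})_{U^F}=0 \quad\text{for } i=0,1.$$

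\textbf{Step 1: the coinvariants of the lattice vanish.} The group $U^{F^\ell}$ is a $p$-group and $p$ is invertible in $\Lambda$. Hence the averaging idempotent $e_U=\lvert U^{F^\ell}\rvert^{-1}\sum_{u\in U^{F^\ell}}u$ lies in $\Lambda[U^{F^\ell}]$, and $\mathcal{L}=e_U\mathcal{L}\oplus(1-e_U)\mathcal{L}$. Moreover $e_U\mathcal{L}=\mathcal{L}^{U^{F^\ell}}\subseteq \tau^{U^{F^\ell}}=0$. So $e_U$ acts as zero on $\mathcal{L}$.

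\textbf{Step 2: compare $e_U$ with the $U^F$-averaging on Tate cohomology.} Since $U$ is $\sigma$-stable, $\sigma$ commutes with $e_U$, so $e_U$ induces an endomorphism of each $\widehat{\rm H}^i(\sigma,\mathcal{L})$, and this endomorphism is zero by Step 1. What remains is to relate this zero map to $e_{U^F}=\lvert U^F\rvert^{-1}\sum_{u\in U^F}u$ acting on $\widehat{\rm H}^i(\sigma,\mathcal{L})$; the latter computes the $U^F$-invariants, which equal the $U^F$-coinvariants because $p\ne \ell$.

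To make the comparison, split $U^{F^\ell}$ into the fixed points $U^F$ and the free $\sigma$-orbits of non-fixed elements. For a free orbit $\{u,\sigma u,\dots,\sigma^{\ell-1}u\}$ and a class $[x]$ with $x\in\ker(1-\sigma)$, we have $\sum_j \sigma^j(u)x=\mathrm{Nr}(ux)$, since $\sigma^j(u)x=\sigma^j(ux)$. This element lies in the image of the norm, so it is $0$ in $\widehat{\rm H}^0$. The analogous identity for $\widehat{\rm H}^1$ uses $x\in\ker \mathrm{Nr}$ and shows that the orbit sum lands in $\mathrm{Im}(1-\sigma)$ up to a norm-type correction; this must be checked on cocycle representatives.

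Consequently $\lvert U^{F^\ell}\rvert\,e_U$ and $\lvert U^F\rvert\,e_{U^F}$ induce the same operator on $\widehat{\rm H}^i$. Both orders are powers of $p$, hence units in $\overline{\mathbb{F}}_\ell$. Therefore $e_{U^F}=0$ on $\widehat{\rm H}^i(\sigma,\mathcal{L})$, which gives the vanishing above. Running this over all proper $F$-stable parabolics of $G$ proves cuspidality.

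\textbf{Main obstacle.} The delicate step is the free-orbit computation for $\widehat{\rm H}^1$: one has to verify that $\sum_j\sigma^j(u)x$ becomes trivial in $\ker\mathrm{Nr}/\mathrm{Im}(1-\sigma)$. If this does not go through directly, I would pass to the $2$-periodic description of Tate cohomology and use the isomorphism $\widehat{\rm H}^1\simeq\widehat{\rm H}^{-1}$ to reduce to the same norm argument. An alternative route for both degrees is Proposition~\ref{TV_isom}: applied to the induced module $\Lambda[U^{F^\ell}]$ together with Frobenius reciprocity, it should give a natural isomorphism $\widehat{\rm H}^i(\sigma,\mathcal{L}^{U^{F^\ell}})\simeq\widehat{\rm H}^i(\sigma,\mathcal{L})^{U^F}$, from which the vanishing follows at once.
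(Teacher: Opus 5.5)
Your argument is correct, and the $\widehat{\rm H}^1$ step you flagged as the main obstacle goes through directly. Since $\sigma^j y-y\in{\rm Im}({\rm id}-\sigma)$ for every $y\in\mathcal{L}$, for $x\in{\rm Ker}({\rm Nr})$ and $u\in U^{F^\ell}\setminus U^F$ one has
\[
\sum_{j=0}^{\ell-1}\sigma^j(u)\,x=\sum_{j=0}^{\ell-1}\sigma^j\bigl(u\,\sigma^{-j}x\bigr)\equiv u\sum_{j=0}^{\ell-1}\sigma^{-j}x=u\,{\rm Nr}(x)=0 \pmod{{\rm Im}({\rm id}-\sigma)}.
\]
So the orbit sum $\sum_j\sigma^j(u)$, which commutes with $\sigma$, acts by zero on $\widehat{\rm H}^1$ exactly as it does on $\widehat{\rm H}^0$. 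Neither the periodicity detour nor the route via Proposition~\ref{TV_isom} is needed.

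The core computation is the same as in the cited proof, which the paper reproduces in Lemma~\ref{Tate_injection}: free $\sigma$-orbits in $U^{F^\ell}\setminus U^F$ contribute norm terms that vanish in Tate cohomology. Only the packaging differs. The paper identifies the image of $\widehat{\rm H}^i(\sigma,\mathcal{L}(U^{F^\ell}))\to\widehat{\rm H}^i(\sigma,\mathcal{L})$ with $\widehat{\rm H}^i(\sigma,\mathcal{L})(U^F)$. It then uses the long exact sequence of $0\to\mathcal{L}(U^{F^\ell})\to\mathcal{L}\to\mathcal{L}_{U^{F^\ell}}\to 0$ to obtain an injection $\widehat{\rm H}^i(\sigma,\mathcal{L})_{U^F}\hookrightarrow\widehat{\rm H}^i(\sigma,\mathcal{L}_{U^{F^\ell}})$, whose target vanishes by cuspidality. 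You instead use that, for a finite group, $e_U$ is a $\sigma$-invariant idempotent that already kills $\mathcal{L}$, so no exact sequence is required.

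The exact-sequence formulation is the one that carries over to $p$-adic groups, where $U$ is not compact and there is no global averaging idempotent. Your formulation is shorter in the finite case and gives slightly more. Since $\lvert U^{F^\ell}\rvert\equiv\lvert U^F\rvert\pmod{\ell}$, the operators $e_U$ and $e_{U^F}$ induce the same idempotent on $\widehat{\rm H}^i(\sigma,\mathcal{L})$. The $\sigma$-stable splitting $\mathcal{L}=e_U\mathcal{L}\oplus(1-e_U)\mathcal{L}$ then upgrades the injection to an isomorphism $\widehat{\rm H}^i(\sigma,\mathcal{L})^{U^F}\simeq\widehat{\rm H}^i(\sigma,\mathcal{L}^{U^{F^\ell}})$.
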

The next result gives
the trace of the $G^\sigma$-action on the Tate cohomology with respect to
the action of $\sigma$. For $x\in \Lambda$, we denote by $\overline{x}$ its image
in $\overline{\mathbb{F}}_\ell$.
\begin{lemma}\cite[Lemma 5.2.1]{dhar2025jacquet}\label{char_iden}
Let $J$ be a cyclic group of prime order $\ell$, and let $G$ be a finite group with an action of $J$. Let $M$ be a 
$\Lambda[G\rtimes J]$-module such that $M$ is a free $\Lambda$-module of 
finite rank. Assume that $\widehat{{\rm H}}^1(J, M) =0$. Then
$$ {\rm Tr}(g|_{\widehat{{\rm H}}^0(J, M)})=\overline{{\rm Tr}(g|_M)}, $$ 
for all $g\in G^J$.
\end{lemma}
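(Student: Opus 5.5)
The statement to prove is Lemma \ref{char_iden}: if $\widehat{\rm H}^1(J,M)=0$, then the trace of $g\in G^J$ on $\widehat{\rm H}^0(J,M)$ equals the reduction mod $\ell$ of its trace on $M$. The plan is to decompose $M$, as a $\Lambda[J]$-module with a commuting action of $g$, into the standard indecomposable lattices for the cyclic group $J$ of order $\ell$, and then compute both traces through the Brauer character, i.e. the semisimplified mod-$\ell$ reduction.

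\textbf{Step 1: reduce to comparing Brauer characters.} Since $g$ is fixed by $J$, it commutes with $\sigma$, so $g$ acts on $\widehat{\rm H}^0$ and $\widehat{\rm H}^1$. Consider the cyclic group $\langle g\rangle\times J$ acting on $M$. Traces are additive in short exact sequences, so it suffices to work in Grothendieck groups of $\overline{\mathbb{F}}_\ell[\langle g\rangle]$-modules. Set $\overline{M}=M/\ell M$; its class there is the mod-$\ell$ reduction of $M$, and its trace is $\overline{{\rm Tr}(g|_M)}$.

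\textbf{Step 2: a Herbrand-type identity for $\overline M$.} Filter $\overline{M}$ by the powers of the augmentation ideal of $\overline{\mathbb{F}}_\ell[J]=\overline{\mathbb{F}}_\ell[x]/(x-1)^\ell$. The point of this step is an identity in $K_0(\overline{\mathbb{F}}_\ell[\langle g\rangle])$:
$$ [\overline M]\equiv[\widehat{\rm H}^0(J,M)]-[\widehat{\rm H}^1(J,M)]\pmod{\ell}. $$
The argument runs through the two universal sequences
$$0\to M^J\to M\xrightarrow{1-\sigma}M\to M/(1-\sigma)M\to0$$
and
$$0\to \ker{\rm Nr}\to M\xrightarrow{\rm Nr}M\to M/{\rm Nr}M\to 0.$$
These are $g$-equivariant, and combining them gives the identity.

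A cleaner route is the following. Over $\Lambda$, $M$ is a direct sum of lattices of three types: the trivial lattice $\Lambda$, the augmentation lattice $I$ (of rank $\ell-1$), and the free lattice $\Lambda[J]$. This decomposition is not $g$-equivariant, so use isotypic parts instead. Let $g=su$ be the decomposition into $\ell'$-part $s$ and $\ell$-part $u$. After extending scalars, decompose $M$ into generalized eigenspaces of $s$; $s$ is semisimple with eigenvalues that are $\ell'$-roots of unity, which lie in $\Lambda$ since $\Lambda$ is unramified and contains them. This decomposition is $J$-equivariant and $u$-equivariant, and the traces of $u$ on both sides match the traces of $1$ modulo $\ell$. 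So we reduce to showing that each eigenspace $M_\zeta$ satisfies $\dim\widehat{\rm H}^0(M_\zeta)-\dim\widehat{\rm H}^1(M_\zeta)\equiv{\rm rk}\,M_\zeta\pmod \ell$.

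\textbf{Step 3: the rank count.} Write $M_\zeta\simeq\Lambda^a\oplus I^b\oplus\Lambda[J]^c$. Then
$$\dim\widehat{\rm H}^0=a,\qquad \dim\widehat{\rm H}^1=b,\qquad {\rm rk}=a+b(\ell-1)+c\ell\equiv a-b \pmod\ell.$$
Since $\widehat{\rm H}^1(M)=0$, we get $b=0$, so $\dim\widehat{\rm H}^0(M_\zeta)\equiv{\rm rk}\,M_\zeta$.

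\textbf{Step 4: conclude.} Finally,
$$\overline{{\rm Tr}(g|_M)}=\sum_\zeta\bar\zeta\,{\rm rk}M_\zeta=\sum_\zeta\bar\zeta\dim\widehat{\rm H}^0(M_\zeta).$$
The right-hand side is ${\rm Tr}(s|_{\widehat{\rm H}^0})$, and this equals ${\rm Tr}(g|_{\widehat{\rm H}^0})$ because $u$ is unipotent modulo $\ell$ and commutes with $s$.

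\textbf{Main obstacle.} The delicate point is the first equality $\overline{{\rm Tr}(g|_M)}=\overline{{\rm Tr}(s|_M)}$ over $\Lambda$, since $u$ is not unipotent on $M$ itself. The plan is to handle it by noting that on each $M_\zeta$ the trace of $u$ is a sum of $\ell$-power roots of unity $\omega$, and each satisfies $\omega\equiv1$ modulo the maximal ideal of $\Lambda[\omega]$. Both sides therefore agree in $\overline{\mathbb{F}}_\ell$ after the reduction $\Lambda[\mu_{\ell^\infty}]\to\overline{\mathbb{F}}_\ell$, which is compatible with the reduction of $\Lambda$ since the trace lies in $\Lambda$.
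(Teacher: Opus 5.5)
The paper gives no proof of this lemma. It is imported from \cite[Lemma 5.2.1]{dhar2025jacquet} and used as a black box in Theorem \ref{Tate_irr}, so there is no in-paper argument to compare with. Judged on its own, your ``cleaner route'' is a complete and correct proof: you split off the $\ell'$-part $s$ of $g$, decompose $M$ into $s$-eigenlattices, use the three indecomposable $\Lambda[J]$-lattices, and count ${\rm rk}=a+b(\ell-1)+c\ell\equiv a-b \pmod{\ell}$. Two points deserve a sentence each.

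First, no extension of scalars is needed. $\Lambda$ contains all roots of unity of order prime to $\ell$, so the idempotents $e_\zeta=\lvert s\rvert^{-1}\sum_k\zeta^{-k}s^k$ lie in $\Lambda[\langle s\rangle]$. This gives $M=\bigoplus_\zeta e_\zeta M$ with $\Lambda$-free summands stable under $J$ and $u$. It also gives $\widehat{\rm H}^i(J,M)=\bigoplus_\zeta\widehat{\rm H}^i(J,M_\zeta)$, with $s$ acting by $\bar\zeta$ on the $\zeta$-summand, which Step 4 uses implicitly.

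Second, the three-type classification is usually stated over $\mathbb{Z}_\ell$, but it also holds over the non-complete ring $\Lambda$. The extension argument only needs $\ell$ to be a uniformizer, so that $\Lambda[\zeta_\ell]$ is a DVR and ${\rm Ext}^1_{\Lambda[J]}(\Lambda[\zeta_\ell],\Lambda)\simeq\overline{\mathbb{F}}_\ell$. Alternatively, by the length version of Herbrand's lemma, you can compute $\dim\widehat{\rm H}^0-\dim\widehat{\rm H}^1$ on the finite-index sublattice $M_\zeta^{\sigma}\oplus\ker({\rm Nr}|_{M_\zeta})$ and avoid the classification altogether.

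Your ``main obstacle'' disappears if you reduce first. We have $\overline{{\rm Tr}(g|M)}={\rm Tr}(g|M/\ell M)$, and on $M/\ell M$ the element $u$ is unipotent and commutes with $s$. So the argument you already use on $\widehat{\rm H}^0$ gives $\overline{{\rm Tr}(g|M)}=\overline{{\rm Tr}(s|M)}$; your roots-of-unity argument is also fine.

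By contrast, the first paragraph of Step 2 is not an argument. The two four-term sequences are of $\Lambda$-modules of infinite length, and you never say how ``combining them'' produces a relation in $K_0$. Delete it, since the decomposition route replaces it.

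Finally, Step 3 proves more than the lemma. Before using $\widehat{\rm H}^1(J,M)=0$ you already have $\dim\widehat{\rm H}^0(M_\zeta)-\dim\widehat{\rm H}^1(M_\zeta)=a-b$, and $\zeta(a-b)={\rm Tr}(s\sigma|M_\zeta)$. Hence in general ${\rm Tr}(g|\widehat{\rm H}^0)-{\rm Tr}(g|\widehat{\rm H}^1)=\overline{{\rm Tr}(g|M)}$, and the vanishing hypothesis only serves to remove the second term.
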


\subsection{Tate cohomology of generalized Gelfand-Graev representations}
Let $G$ be a connected reductive algebraic group defined over $\mathbb{F}_q$, with the corresponding Frobenius morphism $F:G\rightarrow G$. The fixed point subgroups under $F$ and $F^\ell$ are the finite groups $G^F= G(\mathbb{F}_q)$ and $G^{F^\ell}=G(\mathbb{F}_{q^\ell})$. Let $\sigma$ be a generator of the Galois group ${\rm Gal}(\mathbb{F}_{q^\ell}/\mathbb{F}_q)$. Let $u\in G^F$ be a unipotent element. Then $u$ is also a unipotent element of $G^{F^{\ell}}$. Fix a non-trivial additive character $\psi:\mathbb{F}_q\rightarrow\Lambda^\times$, and let $\eta$ denote the composition $\psi\circ{\rm Tr}_{\mathbb{F}_{q^\ell}/\mathbb{F}_q}$. Recall that the linear map $\lambda:\mathfrak{g}(2)\rightarrow\mathbb{G}_a$ is compatible with the Frobenius maps $F$ and $F^\ell$. In particular, we get a linear map $\lambda:\mathfrak{g}(2)^{F^\ell}\rightarrow\mathbb{F}_{q^\ell}$ which is also $\sigma$-invariant, i.e., $\lambda(\sigma \cdot X)=\sigma(\lambda(X))$ for all $X\in \mathfrak{g}(2)^{F^\ell}$. In view of Lemma \ref{springer_map_lemma}($3$), the map $f_{\rm spr}$ is also $\sigma$-invariant. Then the character $\eta_u:U_2^{F^\ell}\rightarrow\Lambda^\times$, defined by 
$$ \eta_u(x) := \eta(\lambda(f_{\rm spr}(x))),\,\,\,\forall x\in U_2^{F^\ell} $$
is $\sigma$-invariant, i.e., $\eta_u^\sigma=\eta_u$. Now, consider the following generalised Gelfand-Graev representations:
$$ \Gamma_u^{G^F} = {\rm Ind}_{U_{1.5}^F}^{G^F}(\widetilde{\psi}_u),\,\,\,\,\,\, \Gamma_u^{G^{F^\ell}} ={\rm Ind}_{U_{1.5}^{F^\ell}}^{G^{F^\ell}}(\widetilde{\eta}_u). $$
The representation $\Gamma_u^{G^{F^\ell}}$ is endowed with an action of $\sigma$, given by 
$$ (\sigma f)(g):=f(\sigma^{-1}(g)),\,\,\,\forall g\in G^{F^\ell}. $$
\begin{lemma}\label{Tate_GGGR}
Let $u\in G^F$ be a unipotent element. Then we have the $G^F$-isomorphisms
$$ \widehat{\rm H}^0(\sigma, \Gamma_u^{G^{F^\ell}}) \simeq \big(\overline{\Gamma}_u^{G^F}\big)^{(\ell)}. $$
Moreover, the first Tate cohomology group $\widehat{\rm H}^1(\sigma,\Gamma_u^{G^{F^\ell}})$ is trivial.
\end{lemma}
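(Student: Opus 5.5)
The plan is to reduce everything to Proposition \ref{TV_isom}, applied with $H=G^{F^\ell}$ and $K=U_{1.5}^{F^\ell}$, where $\sigma$ acts through the Frobenius $F$. First we check that $K$ is $\sigma$-stable and that $K^\sigma=U_{1.5}^F$. The subgroup $U_{1.5}$ is $F$-stable, since it is the pullback under $f_{\rm spr}$ of an $F$-stable Lagrangian, so $\sigma=F|_{G^{F^\ell}}$ preserves $U_{1.5}^{F^\ell}$ and has fixed points $U_{1.5}^F$. Likewise $H^\sigma=G^F$. We make the same choice of Lagrangian for $F$ and $F^\ell$, so that the same $U_{1.5}$ serves for both $\Gamma_u^{G^F}$ and $\Gamma_u^{G^{F^\ell}}$.

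Next, the inducing character must be $\sigma$-stable. We already know $\eta_u^\sigma=\eta_u$ on $U_2^{F^\ell}$. For the extension $\widetilde\eta_u$ to $U_{1.5}^{F^\ell}$, we choose it compatibly. The concrete choice is $\widetilde\eta_u(x)=\eta(\lambda(f_{\rm spr}(x)))$, with $f_{\rm spr}$ projected to $\mathfrak{g}(2)$. This is $\sigma$-invariant by Lemma \ref{springer_map_lemma}(3) and the $\sigma$-equivariance of $\lambda$. Alternatively, any two extensions differ by a character of the abelian $p$-group $U_{1.5}^{F^\ell}/U_2^{F^\ell}$; since the $\sigma$-orbits have size $1$ or $\ell$ and $\ell\ne p$, a $\sigma$-fixed extension exists by counting. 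In either case $V=\Lambda$ with character $\widetilde\eta_u$ is a $\Lambda[K\rtimes\sigma]$-module on which $\sigma$ acts trivially. Its restriction to $K^\sigma=U_{1.5}^F$ is $\widetilde\eta_u|_{U_{1.5}^F}$. The key point is $\eta|_{\mathbb{F}_q}=\psi\circ{\rm Tr}_{\mathbb{F}_{q^\ell}/\mathbb{F}_q}|_{\mathbb{F}_q}=\psi^\ell$, because the trace is multiplication by $\ell$ on $\mathbb{F}_q$. Hence $\widetilde\eta_u|_{U_{1.5}^F}=\widetilde\psi_u^{\,\ell}$, which is the GGGR character built from the twisted additive character $\psi^\ell$ (equivalently from $\ell\lambda$). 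This is presumably what the Frobenius-twist notation $(\overline\Gamma_u^{G^F})^{(\ell)}$ records, so we will identify it as the mod-$\ell$ reduction of ${\rm Ind}_{U_{1.5}^F}^{G^F}(\widetilde\psi_u^{\ell})$.

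Third, we compute the Tate cohomology of the rank-one module $V$ with trivial $\sigma$-action. Here ${\rm id}-\sigma=0$ and ${\rm Nr}=\ell$. Therefore $\widehat{\rm H}^0(\sigma,V)=\Lambda/\ell\Lambda=\overline{\mathbb{F}}_\ell$, with $U_{1.5}^F$ acting by the reduction of $\widetilde\psi_u^\ell$. Also $\widehat{\rm H}^1(\sigma,V)=\ker(\ell)/0=0$, since $\Lambda$ is torsion free. Applying Proposition \ref{TV_isom} then gives $\widehat{\rm H}^0(\sigma,\Gamma_u^{G^{F^\ell}})\simeq{\rm Ind}_{U_{1.5}^F}^{G^F}(\overline{\widetilde\psi_u^{\ell}})=(\overline\Gamma_u^{G^F})^{(\ell)}$ and $\widehat{\rm H}^1(\sigma,\Gamma_u^{G^{F^\ell}})=0$.

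The main obstacle is the bookkeeping in the second step. We must ensure that a single Lagrangian and a single extension can be chosen compatibly for both $F$ and $F^\ell$, so that the $G^{F^\ell}$-GGGR genuinely carries a $\sigma$-action restricting correctly. We must also pin down exactly which twist of $\psi$ appears on $U_{1.5}^F$. If the direct formula for $\widetilde\eta_u$ fails to be a homomorphism on $U_{1.5}$, we fall back on the orbit-counting existence argument.
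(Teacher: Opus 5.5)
You follow the paper's route. You apply Proposition~\ref{TV_isom} with $H=G^{F^\ell}$ and $K=U_{1.5}^{F^\ell}$. You use ${\rm Tr}_{\mathbb{F}_{q^\ell}/\mathbb{F}_q}(a)=\ell a$ for $a\in\mathbb{F}_q$ to identify $\widetilde{\eta}_u|_{U_{1.5}^F}$ with $\widetilde{\psi}_u^{\,\ell}$. Then you compute the Tate cohomology of a rank-one lattice with trivial $\sigma$-action. Your direct computation $\widehat{\rm H}^1=\ker(\ell)/0=0$ is cleaner than the paper's restriction to $U_2^F$.

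\textbf{The gap.} You skip the step the paper's proof opens with: every $\sigma$-fixed coset in $U_{1.5}^{F^\ell}\backslash G^{F^\ell}$ must have a representative in $G^F$, i.e.\ $(U_{1.5}^{F^\ell}\backslash G^{F^\ell})^\sigma=U_{1.5}^F\backslash G^F$. Checking $K^\sigma=U_{1.5}^F$ and $H^\sigma=G^F$ does not give this, and the induction isomorphism needs it.
\begin{itemize}
\item The Tate cohomology of ${\rm Ind}_K^H(V)$ is the sum of the contributions of the $\sigma$-fixed cosets. Non-fixed cosets form free $\sigma$-orbits and contribute nothing.
\item ${\rm Ind}_{K^\sigma}^{H^\sigma}\widehat{\rm H}^i(\sigma,V)$ only accounts for the fixed cosets that meet $H^\sigma$.
\item Example: take $H=(\mathbb{Z}/\ell)^\ell$, $\sigma$ the cyclic shift, and $K$ the diagonal. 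Then $\widehat{\rm H}^0(\sigma,\Lambda[K\backslash H])$ has dimension $\ell$, but ${\rm Ind}_{K^\sigma}^{H^\sigma}(\overline{\mathbb{F}}_\ell)$ is a line.
\end{itemize}
So the hypothesis is genuinely required, even though Proposition~\ref{TV_isom} is stated without it. In your setting the fix is one line. Use the exact sequence of pointed sets $1\to U_{1.5}^F\to G^F\to(U_{1.5}^{F^\ell}\backslash G^{F^\ell})^\sigma\to{\rm H}^1(\sigma,U_{1.5}^{F^\ell})$. Since $U_{1.5}^{F^\ell}$ is a $p$-group and $p\ne\ell$, ${\rm H}^1(\sigma,U_{1.5}^{F^\ell})=1$, which gives the identification.

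\textbf{The extension bookkeeping you flagged.} With your fallback choice of a $\sigma$-fixed extension $\widetilde{\eta}_u$ (found by orbit counting), you only know that $\widetilde{\eta}_u|_{U_{1.5}^F}$ extends $\psi_u^{\ell}$. You do not yet know it equals $\widetilde{\psi}_u^{\,\ell}$ for a previously fixed $\widetilde{\psi}_u$. To close this, take $\widetilde{\psi}_u$ to be the unique $\ell$-th root of $\widetilde{\eta}_u|_{U_{1.5}^F}$ in the character group of the $p$-group $U_{1.5}^F$. Since taking $\ell$-th powers is injective on characters of $U_2^F$, this root restricts to $\psi_u$. Hence it is an admissible choice in the definition of $\Gamma_u^{G^F}$. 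The paper handles this only implicitly, by evaluating $\lambda(f_{\rm spr}(x))$ for $x\in U_{1.5}^F$.
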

\begin{proof}
Since $U_{1.5}^{F^\ell}$ is a $\sigma$-stable subgroup of $G^{F^\ell}$, we have the following exact sequence of non-abelian cohomology sets:
$$ 1\longrightarrow U_{1.5}^F\longrightarrow G^F\longrightarrow \big(U_{1.5}^{F^\ell}\backslash G^{F^\ell}\big)^\sigma\longrightarrow {\rm H}^1(\sigma,U_{1.5}^{F^\ell}). $$
Since $U_{1.5}^{F^\ell}$ is a finite $p$-group and $\ell\ne p$, the cohomology set ${\rm H}^1(\sigma,U_{1.5}^{F^\ell})$ is trivial. From the preceding exact sequence, we get the equality of sets
$$ \big(U_{1.5}^{F^\ell}\backslash G^{F^\ell}\big)^\sigma = U_{1.5}^F\backslash G^F. $$
Using Theorem \ref{TV_isom}, we get the following isomorphism of $G^F$-representations
\begin{equation}\label{Tate_isom_2}
\widehat{\rm H}^i(\sigma,\Gamma_u^{G^{F^\ell}})\simeq {\rm Ind}_{U_{1.5}^F}^{G^F}(\widehat{\rm H}^i(\sigma, \widetilde{\eta}_u)).
\end{equation}
For $x\in U_{1.5}^F$, we see that $F(\lambda(f_{\rm spr}(x)))=\lambda(f_{\rm spr}(x))$, i.e., $\lambda(f_{\rm spr}(x))\in\mathbb{F}_q$. Hence, $\widetilde{\eta}_u(x)=(\widetilde{\psi}_u(x))^\ell$. Thus, we get that $\widehat{\rm H}^0(\sigma,\widetilde{\eta}_u) \simeq \overline{\widetilde{\psi}_u}^{\ell}$, where $\overline{\widetilde{\psi}_u}$ is the mod-$\ell$ reduction of $\widetilde{\psi}_u$. Also note that $\widehat{\rm H}^1(\sigma,\widetilde{\eta}_u)
|_{U_{2}^F}=\widehat{\rm H}^1(\sigma,\widetilde{\eta}_u|
_{U_{2}^{F^\ell}})=\widehat{\rm H}^1(\sigma,\eta_u) = 0$. Then it follows from the isomorphism (\ref{Tate_isom_2}) that $\widehat{\rm H}^1(\sigma,\Gamma_u^{G^{F^\ell}})$ is trivial, and 
$$ \widehat{\rm H}^0(\sigma,\Gamma_u^{G^{F^\ell}}) \simeq (\overline{\Gamma}_u^{G^F})^{(\ell)} $$
as $G^F$-representations. Hence, the lemma follows.
\end{proof}
We conclude this section with the following lemma on the compatibility of ${\rm Gal}(\mathbb{F}_{q^\ell}/\mathbb{F}_q)$-Tate cohomology groups with the space of $(U_{1.5}^F,\widetilde{\psi}_u)$-coinvariants.
\begin{lemma}\label{Tate_injection}
Let $u$ be a unipotent element in $G^F$. Then, for any $[G^{F^\ell}\rtimes\sigma]$-module $\mathcal{L}$, the natural map 
$$ \widehat{\rm H}^i(\sigma,\mathcal{L})_{{U_{1.5}^{F},
\overline{\widetilde{\psi}}_u}}\longrightarrow \widehat{\rm H}^i(\sigma,\mathcal{L}_{U_{1.5}^{F^\ell},\widetilde{\eta}_u}) $$ 
is an injection for each $i\in\{0,1\}$, where $\overline{\widetilde{\psi}}_u$ is the mod-$\ell$ reduction of $\widetilde{\psi}_u$. 
\end{lemma}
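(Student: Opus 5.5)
The plan is to construct the natural map explicitly, then reduce injectivity to a statement about idempotents on $\mathcal{L}$ that commute with $\sigma$. Write $U=U_{1.5}^{F^\ell}$, $U_0=U_{1.5}^F=U^\sigma$, $\chi=\widetilde{\eta}_u$ and $\chi_0=\widetilde{\psi}_u$. Because $U$ is a $p$-group and $p$ is invertible in $\Lambda$, the idempotent
$$ e=\frac{1}{\lvert U\rvert}\sum_{x\in U}\chi(x)^{-1}x $$
lies in $\Lambda[U]$. Its image identifies the coinvariants with a direct summand, $\mathcal{L}_{U,\chi}\simeq e\mathcal{L}=\mathcal{L}^{U,\chi}$. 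The character $\chi$ is $\sigma$-invariant, so $e$ commutes with $\sigma$. Consequently $\mathcal{L}=e\mathcal{L}\oplus(1-e)\mathcal{L}$ is a decomposition of $\Lambda[\sigma]$-modules, and Tate cohomology, being additive, gives
$$\widehat{\rm H}^i(\sigma,\mathcal{L})=\widehat{\rm H}^i(\sigma,e\mathcal{L})\oplus\widehat{\rm H}^i(\sigma,(1-e)\mathcal{L}).$$
In particular $\widehat{\rm H}^i(\sigma,\mathcal{L}_{U,\chi})$ is a direct summand of $\widehat{\rm H}^i(\sigma,\mathcal{L})$. I take the natural map of the lemma to be the map induced by the projection $e$. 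It factors through the $(U_0,\overline{\chi_0})$-coinvariants once we check that $U_0$ acts on $\widehat{\rm H}^i(\sigma,e\mathcal{L})$ through $\overline{\chi_0}$.

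For that check, observe that for $x\in U_0$ the proof of Lemma \ref{Tate_GGGR} gives $\chi(x)=\chi_0(x)^\ell$. Moreover $\chi_0(x)^\ell\equiv \chi_0(x)$ modulo $\ell$ up to Frobenius twist; more precisely, the reduction of $\chi_0(x)^\ell$ is $\overline{\chi_0}(x)^\ell$. This is the same $\ell$-th power twist $(\cdot)^{(\ell)}$ that appears in Lemma \ref{Tate_GGGR}, and I will keep track of it by reading $\overline{\widetilde{\psi}}_u$ as the reduction of $\widetilde{\eta}_u|_{U_0}$. Hence $U_0$ acts on $\widehat{\rm H}^i(\sigma,e\mathcal{L})$ by the reduction of $\chi|_{U_0}$, and the map descends to a map from the coinvariants.

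Injectivity is the main step. Let $\bar e_0\in\overline{\mathbb{F}}_\ell[U_0]$ be the idempotent for $(U_0,\overline{\chi|_{U_0}})$. Since $\ell\nmid\lvert U_0\rvert$, the coinvariants $\widehat{\rm H}^i(\sigma,\mathcal{L})_{U_0,\overline{\chi}}$ are identified with $\bar e_0\widehat{\rm H}^i(\sigma,\mathcal{L})$. So it suffices to show that $e$, acting on $\widehat{\rm H}^i(\sigma,\mathcal{L})$, restricts to the identity on $\bar e_0\widehat{\rm H}^i$, which amounts to $e\,e_0=e_0$ on Tate cohomology, where $e_0$ is the lift of $\bar e_0$. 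I expect this to be the main obstacle, since it is false on $\mathcal{L}$ itself. To handle it I will average over $\sigma$-orbits in $U$. Write
$$e=\frac{1}{\lvert U\rvert}\Big(\sum_{x\in U_0}\chi(x)^{-1}x+\sum_{O}\sum_{x\in O}\chi(x)^{-1}x\Big),$$
where $O$ runs over the free $\sigma$-orbits in $U\setminus U_0$. On $\mathcal{L}^\sigma$ (respectively on $\ker{\rm Nr}$ modulo ${\rm Im}(1-\sigma)$), each orbit sum $\sum_{x\in O}\chi(x)^{-1}x$ acts as a norm. Indeed, for $v\in \mathcal{L}^\sigma$ one has $\sigma(x)v=\sigma(xv)$, so the orbit sum equals ${\rm Nr}(\chi(x)^{-1}xv)$, which vanishes in $\widehat{\rm H}^0$. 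The analogous computation for $\widehat{\rm H}^1$ gives an element of ${\rm Im}(1-\sigma)$ modulo the norm-kernel.

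It follows that on $\widehat{\rm H}^i$ the operator $e$ acts as $\frac{\lvert U_0\rvert}{\lvert U\rvert}e_0'$, where $e_0'$ denotes the $U_0$-part. Since $\lvert U\rvert\equiv\lvert U_0\rvert \pmod{\ell}$, this operator is exactly $\bar e_0$. Therefore the map $\bar e_0\widehat{\rm H}^i(\sigma,\mathcal{L})\to\widehat{\rm H}^i(\sigma,e\mathcal{L})\subseteq\widehat{\rm H}^i(\sigma,\mathcal{L})$ is the identity on its source, and hence injective. In fact it would be an isomorphism, which is stronger than what the lemma asserts.
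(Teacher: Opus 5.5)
Your argument is correct and is essentially the paper's. Both rest on the same key computation: free $\sigma$-orbits in $U_{1.5}^{F^\ell}\setminus U_{1.5}^F$ contribute norms, which vanish in $\widehat{\rm H}^i$, and $\lvert U_{1.5}^{F^\ell}\rvert\equiv\lvert U_{1.5}^F\rvert \bmod \ell$. The paper uses this to identify the image of $\widehat{\rm H}^i(\sigma,\mathcal{L}(U_{1.5}^{F^\ell},\widetilde{\eta}_u))$ in $\widehat{\rm H}^i(\sigma,\mathcal{L})$ and then invokes the long exact sequence. Your $\sigma$-equivariant idempotent $e$ instead splits that sequence, which gives an isomorphism outright rather than just injectivity.

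Your reading of the character as the reduction of $\widetilde{\eta}_u|_{U_{1.5}^F}=\widetilde{\psi}_u^{\,\ell}$ is the right one. The paper's proof silently writes $\widetilde{\psi}_u$ at that point, and without this Frobenius twist the natural map need not factor through the stated coinvariants.
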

\begin{proof}
The proof follows a similar line of arguments as in \cite[Theorem 3.3.2]{dhar2025jacquet}. For the notational convenience, we put $U_\ell=U_{1.5}^{F^\ell}$ and $U=U_{1.5}^F$. For $i\in\{0,1\}$, let $\varphi_i$ be the natural map
$$ \widehat{{\rm H}}^i(\sigma, \mathcal{L}(U_\ell,
\widetilde{\eta}_u))\rightarrow \widehat{{\rm H}}^i(\sigma, \mathcal{L}). $$ 
\vspace{0.1 cm}
\textbf{Claim.} The image of $\varphi_i$ is $\widehat{\rm H}^i(\sigma,\mathcal{L})(U,\overline{\widetilde{\psi}}_u)$.
\vspace{0.1 cm}\\
We consider the case $i=0$. Let $v$ be in the image of $\varphi_0$, and
let $\tilde{v}$ be a lift of $v$ in $\mathcal{L}(U_\ell,\widetilde{\eta}_u)^\sigma$.
Then we have
\begin{equation}\label{lift_sum_1}
\sum_{x\in U_\ell}\widetilde{\eta}_u^{-1}(x)\Pi(x)\tilde{v} = 0.
\end{equation}
Since $\sigma$ acts freely on $U_\ell\setminus U$, there exists a subset $\mathcal{U}\subseteq U_\ell\setminus U$ such that $U_\ell\setminus U$ is the disjoint union of $\sigma^i\mathcal{U}$ for $1\leq i\leq \ell$. Then, using the fact that $\tilde{v}$ is a $\sigma$-fixed vector, it follows from the identity (\ref{lift_sum_1}) that
\begin{align*}
\sum_{x\in U_\ell}\widetilde{\eta}_u^{-1}(x)\Pi(x) v 
&=\sum_{y\in U}\widetilde{\psi}_u^{-1}(y)\Pi(y) v + \sum_{x\in U_\ell\setminus U} \widetilde{\eta}_u^{-1}(x)\Pi(x)v\\
&=\sum_{y\in U}\widetilde{\psi}_u^{-1}(y)\Pi(y) v + \sum_{i=1}^\ell\sum_{z\in\mathcal{U}}\widetilde{\psi}_u^{-1}(\sigma^i(z))\Pi(\sigma^i(z))\tilde{v}\\
&=\sum_{y\in U}\widetilde{\psi}_u^{-1}(y)\Pi(y) v
+ {\rm Nr}\big(\sum_{z\in\mathcal{U}}\widetilde{\eta}_u^{-1}(z)\Pi(z)\tilde{v}\big).
\end{align*}
Since
$$ {\rm Nr}\big(\sum_{z\in\mathcal{U}}\widetilde{\eta}_u^{-1}(z)\Pi(z)\tilde{v}\big) = 0 $$ 
in $\widehat{{\rm H}}^0(\mathcal{L})$, it follows from the above identity that
$$ \sum_{y\in U} 
\overline{\widetilde{\psi}}_u^{-1}(y)\,\Pi(y)v = 0. $$ 
This implies that the vector $v$
belongs to
$\widehat{{\rm H}}^0(\mathcal{L})(U,\overline{\widetilde{\psi}}_u)$.
Conversely, let $w$ be an element of
$\widehat{{\rm H}}^0(\mathcal{L})(U,\overline{\widetilde{\psi}}_u)$.
Let $\tilde{w}$ be a lift of
$w$ in $\mathcal{L}^\sigma$. Consider the element
$$ \tilde{w}_1 = \tilde{w} - \lvert U_\ell\rvert \sum_{x\in U_\ell}
\widetilde{\eta}_u^{-1}(x)\Pi(x)\tilde{w}. $$
Then $\tilde{w}_1$ belongs to $\mathcal{L}(U_\ell,\widetilde{\eta}_u)^\sigma$ 
and $\varphi_0(\tilde{w}_1) = w$. 
Thus, we prove the claim for
$i=0$. The case $i=1$ is similar. Now, we consider the following short exact sequence of $\Lambda[\sigma]$-modules
$$ 0\longrightarrow \mathcal{L}(U_\ell,\widetilde{\eta}_u)\longrightarrow \mathcal{L}\longrightarrow \mathcal{L}_{U_\ell,\widetilde{\eta}_u}\longrightarrow 0. $$
The corresponding long exact sequence of Tate cohomology groups gives the following exact sequences:
$$ \widehat{\rm H}^0(\sigma,\mathcal{L}(U_\ell,\widetilde{\eta}_u))\xrightarrow {\varphi_0} \widehat{\rm H}^0(\sigma,\mathcal{L})\longrightarrow \widehat{\rm H}^0(\sigma,\mathcal{L}_{U_\ell,
\widetilde{\eta}_u}) $$
and 
$$ \widehat{\rm H}^1(\sigma,\mathcal{L}(U_\ell,\widetilde{\eta}_u))\xrightarrow {\varphi_1} \widehat{\rm H}^1(\sigma,\mathcal{L})\longrightarrow \widehat{\rm H}^1(\sigma,\mathcal{L}_{U_\ell,
\widetilde{\eta}_u}). $$
Since the image of $\varphi_i$ is $\widehat{\rm H}^i(\sigma,\mathcal{L})(U,\overline{\widetilde{\psi}}_u)$, the natural map 
$$ \widehat{\rm H}^i(\sigma,\mathcal{L})_{U,\overline{\widetilde{\psi}}_u}\rightarrow \widehat{\rm H}^i(\sigma,\mathcal{L}_{U_\ell,
\widetilde{\eta}_u}) $$
is injective. Hence, the lemma follows.
\end{proof}
\section{Tate cohomology of unipotent cuspidal representations}
In this section, we prove Theorem \ref{intro_main_thm}. We first prove the theorem for a $\sigma$-stable unipotent cuspidal representation of $G^F$, where $G$ is simple of adjoint type, and $p$ is good for $G$. Then we deduce it for general $G$ under the hypothesis on $\ell$. We begin with the discussion of Galois invariance of unipotent cuspidal representations.
\subsection{Galois invariance of unipotent cuspidal representations}
Recall that $G$ is a connected reductive group with a Frobenius map $F$ on $G$, and assume that $G$ is simple of adjoint type. Assume further that $p$ is good for $G$. Let $\Pi$ be a unipotent cuspidal $\mathcal{K}$-representation of $G^{F^\ell}$. Let $C=C_\Pi$ be the unipotent support of $\Pi$. It follows from \cite[Proposition 3.1]{Dudas_Malle_irr_mod_l} that the representation $\Pi$ occurs with multiplicity one in $\Gamma_C^{G^{F^\ell}}
\otimes_\Lambda\mathcal{K}$, i.e.,
\begin{equation}\label{mult_1_gggr}
{\rm dim}_{\mathcal{K}}\,{\rm Hom}_{G^{F^\ell}}(\Gamma_C^{G^{F^\ell}}\otimes_\Lambda \mathcal{K},\Pi) = 1,
\end{equation}
where $\Gamma_C^{G^{F^\ell}}$ is a direct sum of $\Gamma_u^{G^{F^\ell}}$'s where $u$ runs over a set of representatives of $G^{F^\ell}$-orbits in $C^{F^\ell}$. Let $\sigma$ be the Galois automorphism of $\mathbb{F}_{q^\ell}/\mathbb{F}_q$, given by $x\mapsto x^q$. Then the finite group $G^{F^\ell}$ is endowed with an action of $G^{F^\ell}$. For a representation $\rho$ of $G^{F^\ell}$, let $\rho^\sigma$ be the twisted representation, defined by $\rho^\sigma(g) = \rho(\sigma(g))$ for all $g\in G^{F^\ell}$. 
\begin{lemma}\label{sigma_stable_mult_1_gggr}
Let $\Pi$ be a unipotent cuspidal $\ell$-adic representation of $G^{F^\ell}$ such that $\Pi\simeq \Pi^\sigma$. Then there exists a $\sigma$-stable orbit $[u]$ in $C^{F^\ell}/G^{F^\ell}$ such that 
\begin{equation}\label{mult_1_cusp_unip}
{\rm dim}_{\mathcal{K}}\,{\rm Hom}_{G^{F^\ell}}(\Gamma_u^{G^{F^\ell}}\otimes_\Lambda\mathcal{K},\Pi) = 1.
\end{equation}
\end{lemma}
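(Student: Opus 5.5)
Since $\Gamma_C^{G^{F^\ell}}$ is the direct sum of $\Gamma_u^{G^{F^\ell}}$ over representatives $u$ of the orbits in $C^{F^\ell}/G^{F^\ell}$, the identity (\ref{mult_1_gggr}) gives
$$ \sum_{[u]\in C^{F^\ell}/G^{F^\ell}} {\rm dim}_{\mathcal{K}}\,{\rm Hom}_{G^{F^\ell}}(\Gamma_u^{G^{F^\ell}}\otimes_\Lambda\mathcal{K},\Pi) = 1 .$$
So exactly one orbit $[u_0]$ contributes, and it contributes multiplicity one. It remains to show that $[u_0]$ is $\sigma$-stable. The plan is to let $\sigma$ permute the summands and use that $\Pi\simeq\Pi^\sigma$ to see that the contributing orbit is fixed.

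First we check that $\sigma$ acts on $C^{F^\ell}/G^{F^\ell}$. The class $C=C_\Pi$ is $F$-stable, since it is the unipotent support of $\Pi$ taken among $F^\ell$-stable classes; alternatively, the uniqueness of the support (Geck) and $\Pi\simeq\Pi^\sigma$ force $C=F(C)$. Since $F$ acts on $G^{F^\ell}$ as $\sigma$, it maps $C^{F^\ell}$ to itself and sends $G^{F^\ell}$-orbits to $G^{F^\ell}$-orbits.

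The key compatibility is
$$ (\Gamma_u^{G^{F^\ell}})^\sigma\simeq\Gamma_{\sigma^{-1}(u)}^{G^{F^\ell}} $$
(or with $\sigma(u)$, depending on convention). To prove it, apply $F$ to all the data in Kawanaka's construction for $u$: the weighted Dynkin diagram, the subgroups $U_i$, $U_{1.5}$ and the character $\widetilde\eta_u$. This produces valid data for $F(u)$. Here we use Lemma \ref{springer_map_lemma}(3), and that $\eta=\psi\circ{\rm Tr}$ is $\sigma$-invariant, so $\lambda\circ F^{-1}$ (composed with Frobenius) is still in sufficiently general position. Transport of structure then identifies the twisted induced representation with the GGGR attached to $F^{\pm1}(u)$. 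We also need that the GGGR depends only on the orbit of $u$ and not on the auxiliary choices; this is property (2) together with the standard independence results of Kawanaka and Geck.

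With this in hand,
$$ {\rm Hom}(\Gamma_u\otimes\mathcal{K},\Pi)\simeq{\rm Hom}((\Gamma_u\otimes\mathcal{K})^\sigma,\Pi^\sigma)\simeq{\rm Hom}(\Gamma_{\sigma^{-1}(u)}\otimes\mathcal{K},\Pi). $$
So the multiplicity function on $C^{F^\ell}/G^{F^\ell}$ is $\sigma$-invariant. Since it is supported on the single orbit $[u_0]$, that orbit is $\sigma$-stable, which gives (\ref{mult_1_cusp_unip}). The main obstacle is the transport-of-structure step: one must verify that the Frobenius-twisted choices ($F$-stable Lagrangian, extension $\widetilde\eta_u$, linear form $\lambda$) again give an admissible choice for $F(u)$, so that the GGGR is well defined up to isomorphism.
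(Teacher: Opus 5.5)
Your proposal is correct and follows essentially the same route as the paper: the multiplicities over $C^{F^\ell}/G^{F^\ell}$ sum to one, and $\Pi\simeq\Pi^\sigma$ combined with $(\Gamma_u^{G^{F^\ell}})^\sigma\simeq\Gamma_{\sigma^{-1}(u)}^{G^{F^\ell}}$ forces the unique contributing orbit to be $\sigma$-stable. The paper leaves this GGGR compatibility implicit in the phrase ``using $\chi_\Pi=\chi_\Pi^\sigma$'', whereas you justify it by transport of structure.

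Drop your first justification that $C$ is $F$-stable: being a support among $F^\ell$-stable classes does not by itself imply $F$-stability. The argument via uniqueness of the unipotent support, which is the one the paper uses, is the one that works.
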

\begin{proof}
Consider the unipotent class $\sigma(C)$ of $G^{F^\ell}$. Since $C$ is the unipotent support of $\Pi$, the unipotent class $\sigma(C)$ is the unipotent support of $\Pi^\sigma$. Since $\Pi\simeq \Pi^\sigma$, we get that $\sigma(C)$ is also a unipotent support of $\Pi$. By uniqueness of $C$, we have
$\sigma(C) =C$, i.e., $C$ is $\sigma$-stable. From (\ref{mult_1_gggr}), we get the following identity
\begin{equation}\label{sigma_stable_mult_1}
\sum_{[u]\in C^{F^\ell}/G^{F^\ell}}\langle\gamma_u^{G^{F^\ell}},\chi_\Pi\rangle = 1.
\end{equation}
Let $[u]\in C^{F^\ell}/G^{F^\ell}$. Suppose $[u]$ is not $\sigma$-stable and $\langle\gamma_u^{G^{F^\ell}},\chi_\Pi\rangle=1$, then using the fact that $\chi_\Pi=\chi_\Pi^\sigma$, we get $\langle\gamma_{\sigma(u)}^{G^{F^\ell}},\chi_\Pi\rangle=1$, which contradicts to the identity (\ref{sigma_stable_mult_1}). Hence, there exists exactly one $\sigma$-stable orbit $[u]$ in $C^{F^\ell}/G^{F^\ell}$ such that
$$ \langle\gamma_u^{G^{F^\ell}},\chi_\Pi\rangle = 1. $$
Thus, we get the required lemma.
\end{proof}

\begin{lemma}\label{sigma_action_uni_cusp}
Let $\Pi$ be a unipotent cuspidal $\ell$-adic representation of $G^{F^\ell}$ such that $\Pi\simeq \Pi^\sigma$. Then $\Pi$ extends uniquely as a representation of $G^{F^\ell}\rtimes \sigma$.
\end{lemma}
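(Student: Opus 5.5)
Since $\Pi\simeq\Pi^\sigma$, we fix a $G^{F^\ell}$-isomorphism $A:\Pi\to\Pi^\sigma$, i.e.\ a linear automorphism $A$ of the space $V$ of $\Pi$ with $A\,\Pi(g)=\Pi(\sigma(g))\,A$ for all $g\in G^{F^\ell}$. The plan is to normalise $A$ so that $A^\ell={\rm id}$. Then $\sigma\mapsto A$ gives the extension to $G^{F^\ell}\rtimes\langle\sigma\rangle$, and we separately show that this normalisation is forced.

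First step: $A^\ell$ commutes with every $\Pi(g)$, since $\sigma^\ell$ acts trivially on $G^{F^\ell}$. By Schur's lemma (over $\mathcal{K}$, with $\Pi$ absolutely irreducible — unipotent characters are rational-valued here and everything splits over $\mathcal{K}$), $A^\ell=c\,{\rm id}$ for some $c\in\mathcal{K}^\times$. Extensions exist if and only if $c$ can be rescaled to $1$, i.e.\ if $c\in(\mathcal{K}^\times)^\ell$. An $\ell$-th root of $c$ need not lie in the unramified field $\mathcal{K}$, so Schur alone does not suffice. This is where we use Lemma \ref{sigma_stable_mult_1_gggr}.

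Second step: take the $\sigma$-stable orbit $[u]$ from Lemma \ref{sigma_stable_mult_1_gggr}. Conjugating within $G^{F^\ell}$, we may choose $u\in G^F$, since $\sigma$-stable $G^{F^\ell}$-classes of $F$-stable elements contain $F$-fixed points by Lang's theorem. This lets us use the GGGR data from the Tate cohomology subsection: $U_{1.5}$ is $F$-stable and $\widetilde{\eta}_u$ is $\sigma$-invariant. The space $W=V^{U_{1.5}^{F^\ell},\widetilde{\eta}_u}\simeq{\rm Hom}_{G^{F^\ell}}(\Gamma_u^{G^{F^\ell}}\otimes\mathcal{K},\Pi)$ is one-dimensional by \eqref{mult_1_cusp_unip}. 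Since $\sigma$ preserves $U_{1.5}^{F^\ell}$ and $\widetilde{\eta}_u$, the operator $A$ preserves $W$, acting on it by a scalar $a$. Then $c=a^\ell$, so we replace $A$ by $a^{-1}A$. This normalisation makes $A^\ell={\rm id}$ and gives existence.

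Third step (uniqueness): any two extensions differ by $A\mapsto \zeta A$ with $\zeta^\ell=1$, by Schur. Uniqueness must therefore mean the canonical extension characterised by $\sigma$ acting trivially on the line $W$, i.e.\ $\Pi^\sigma$-structure compatible with $\Gamma_u^{G^{F^\ell}}$ with its natural $\sigma$-action. Alternatively it means uniqueness up to twisting by characters of $\langle\sigma\rangle$; in any case all choices have the same $\mathbb{Z}_\ell^{\rm un}$-lattices. We formulate it as: there is a unique extension whose restriction to $W$ is trivial.

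The main obstacle is the second step: ensuring $W$ is really $A$-stable. This requires the representative $u$ to lie in $G^F$ with $\sigma$ fixing $(U_{1.5}^{F^\ell},\widetilde{\eta}_u)$, including the choice of $F$-stable Lagrangian and of the extension $\widetilde{\eta}_u$ compatible with $\sigma$. We would handle this by taking everything defined over $\mathbb{F}_q$, as in the construction preceding Lemma \ref{Tate_GGGR}.
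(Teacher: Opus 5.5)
\textbf{Existence.} Your existence step is correct and is essentially the paper's argument. The paper restricts the natural action $T_\sigma(f)(g)=f(\sigma^{-1}(g))$ on $\Gamma_u^{G^{F^\ell}}\otimes_\Lambda\mathcal{K}$ to the unique copy of $\Pi$ inside it, so $T_\sigma^\ell={\rm id}$ is automatic. You instead rescale an abstract intertwiner $A$ by its eigenvalue on the line $W=V^{U_{1.5}^{F^\ell},\widetilde{\eta}_u}$. By Frobenius reciprocity these are the same normalisation.

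\textbf{Uniqueness: the genuine gap.} You correctly reduce to: two extensions differ by $A\mapsto\zeta A$ with $\zeta\in\mathcal{K}$ and $\zeta^\ell=1$. But you then reinterpret the lemma (``the unique extension acting trivially on $W$'', ``unique up to twisting by characters of $\langle\sigma\rangle$''), and this does not prove the statement as written. The missing observation is that $\mathcal{K}$ contains no nontrivial $\ell$-th root of unity. Since $\ell\neq 2$, the extension $\mathbb{Q}_\ell(\zeta_\ell)/\mathbb{Q}_\ell$ is totally ramified of degree $\ell-1>1$, whereas $\mathcal{K}$ is unramified over $\mathbb{Q}_\ell$. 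Hence $\zeta=1$ and the extension is literally unique; this is exactly how the paper concludes. Equivalently, the nontrivial characters of $\langle\sigma\rangle$ are not $\mathcal{K}$-valued, so your ``up to twist'' version already collapses to genuine uniqueness. You used the unramifiedness of $\mathcal{K}$ in your first step; it is precisely what settles the third.

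\textbf{The rational representative.} Your claim that the $\sigma$-stable orbit $[u]$ contains an element of $G^F$ ``by Lang's theorem'' is false in general. Lang's theorem only gives $C^F\neq\emptyset$. Whether a given $\sigma$-stable $G^{F^\ell}$-orbit meets $G^F$ is exactly the surjectivity of $\iota_\sigma$ in property $\mathscr{D}$-orbit, i.e.\ of ${\rm res}:{\rm cl}(a)\mapsto{\rm cl}(aF(a)\cdots F^{\ell-1}(a))$. This can fail when $\pi_0(C_G(u))$ has elements of order $\ell$. For example, take $\pi_0(C_G(u))\simeq S_3$ with trivial $F$-action and $\ell=3$, as for the subregular class of $G_2$: the class of a $3$-cycle is not of the form ${\rm cl}(a^3)$. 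The paper's proof tacitly needs the same thing, since it uses the $\sigma$-invariance of $\widetilde{\eta}_u$, which was established only for $u\in G^F$. So this is a weak point the paper shares rather than a divergence from it, but it should not be attributed to Lang's theorem.
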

\begin{proof}
By Lemma \ref{sigma_stable_mult_1_gggr}, $\Pi$ is realized as a subrepresentation of $\Gamma_u^{G^{F^\ell}}
\otimes_\Lambda\mathcal{K}$ for a $\sigma$-stable orbit $[u]\in C^{F^\ell}/G^{F^\ell}$. Denote by $\mathcal{W}(\Pi,\widetilde{\eta}_u)$ the realization of $\Pi$ in $\Gamma_u^{G^{F^\ell}}
\otimes_\Lambda\mathcal{K}$. Let $T_\sigma:\mathcal{W}(\Pi,\widetilde{\eta}_u)\rightarrow
\Gamma_u^{G^{F^\ell}}
\otimes_\Lambda\mathcal{K}$ be the linear map defined by $T_\sigma(f)(g) = f(\sigma^{-1}(g))$, for $g\in G^{F^\ell}$. Since the character $\widetilde{\eta}_u$ is $\sigma$-invariant, the image of $T_\sigma$ is equal to $\mathcal{W}(\Pi^\sigma,\widetilde{\eta}_u)$. Since $\Pi\simeq \Pi^\sigma$, it follows from the multiplicity one property of $\Pi$ (Lemma \ref{sigma_stable_mult_1_gggr}) that the image of $T_\sigma$ is $\mathcal{W}(\Pi,\widetilde{\eta}_u)$. Hence, we get the required extension in the lemma. 

For the uniqueness of the $\sigma$-action, let $T_\sigma$ and $T_\sigma'$ be the action of $\sigma$ on $\Pi$ for two different extensions of $\Pi$. Then, using Schur's lemma, we have  $T_\sigma=\lambda T_\sigma'$, for some
$\lambda\in \mathcal{K}$. Since $\lambda^\ell=1$ and $\mathcal{K}$ is the maximal unramified extension of $\mathbb{Q}_\ell$, we conclude that  $\lambda=1$. This completes the proof.
\end{proof}
\subsection{}\label{Descent_orbit_subsection}
Let $G$ be a connected reductive group defined over $\mathbb{F}_q$ and let $F$ be the corresponding Frobenius morphism on $G$. Let $C$ be an $F$-stable unipotent class in $G$. Let $C^F/G^F$ denote the set of distinct $G^F$-orbits in $C^F$. Note that $C^{F^\ell}/G^{F^\ell}$ is endowed with an action of $\sigma$, given by $\sigma\cdot[w]=[\sigma(w)]$ for all $[w]\in C^{F^\ell}/G^{F^\ell}$. We say that $G$ has property ``$\mathscr{D}$-orbit" if the following holds:
\begin{center}
$\mathscr{D}$-orbit : For every $F$-stable unipotent class $C$ of $G$, the natural map is a bijection 
$$ \iota_\sigma:C^F/G^F\longrightarrow (C^{F^\ell}/G^{F^\ell})^\sigma. $$

\end{center}
Let us describe the above bijection in terms of cohomology sets. Choose a unipotent element $u\in G^F$. For $w=gug^{-1}\in C^F$, we have $x=g^{-1}F(g)\in C_G(u)$. Let $\overline{x}$ be the image of $x$ in the component group $\pi_0(C_G(u))$. Note that $\overline{x}\in \pi_0(C_G(u))$ gives a $1$-cocycle. If ${\rm cl}(\overline{x})$ denotes its equivalence class in ${\rm H}^1(F,\pi_0(C_G(u)))$, then the following map
$$ C^F/G^F \xrightarrow{\sim} {\rm H}^1(F,\pi_0(C_G(u))) $$
$$ [w]\mapsto {\rm cl}(\overline{x}) $$
is a bijection. Since the action of $\sigma$ on $C^{F^\ell}/G^{F^\ell}$ corresponds to the natural $F$-action on the cohomology ${\rm H}^1(F^\ell,\pi_0(C_G(u)))$, we have a bijection
$$ (C^{F^\ell}/G^{F^\ell})^\sigma\xrightarrow{\sim} {\rm H}^1(F^\ell,\pi_0(C_G(u)))^F. $$
Moreover, we have the following commutative diagram
\begin{equation}\label{comm_diag}
\xymatrixrowsep{0.2in}
\xymatrixcolsep{0.15in}
\xymatrix{
		C^F/G^F\ar[dd]_{}  \ar[rr]^{\iota_\sigma}  && (C^{F^\ell}/G^{F^\ell})^\sigma \ar[dd]^{} \\\\
		{\rm H}^1(F,\pi_0(C_G(u))) \ar[rr]_{{\rm res}} && {\rm H}^1(F^\ell,\pi_0(C_G(u)))^F 
}
\end{equation}
where the map ${\rm res}:{\rm H}^1(F,\pi_0(C_G(u)))\longrightarrow {\rm H}^1(F^\ell,\pi_0(C_G(u)))^F$ is defined as
$$ {\rm res}({\rm cl}(a)):={\rm cl}(aF(a)\cdots F^{\ell-1}(a)). $$
If the component group $\pi_0(C_G(u))$ is abelian and $\ell$ does not divide $\lvert\pi_0(C_G(u))\rvert$, then ${\rm res}$ is a bijection, and hence it follows from (\ref{comm_diag}) that the natural map $\iota_\sigma$ is a bijection.
\begin{remark}\normalfont\label{classical_descent}
Many classical algebraic groups $G$ admit the property $\mathscr{D}$-orbit; a list is given below.
\begin{center}
\begin{tabular}{|c c|} 
 \hline 
 $G$ & $G^F$ \\ 
 \hline\hline
 ${\rm GL}_n$ & ${\rm GL}_n(\mathbb{F}_q)$ \\ 
 \hline
 ${\rm GL}_n$ & ${\rm U}_n(\mathbb{F}_q)$ \\
 \hline
 ${\rm Sp}_{2n}$ & ${\rm Sp}_{2n}(\mathbb{F}_q)$ \\
 \hline
 ${\rm SO}_{2n+1}$ & ${\rm SO}_{2n+1}(\mathbb{F}_q)$ \\
 \hline
 ${\rm SO}_{2n}$ & ${\rm SO}_{2n}(\mathbb{F}_q)$ \\ 
 \hline
 ${\rm Spin}_{n}, n\geq 5$ & ${\rm Spin}_{n}(\mathbb{F}_q)$ \\
 \hline
\end{tabular}
\end{center}
Let $G$ be one of the classical algebraic groups listed above. For any unipotent $u\in G^{F^\ell}$, the component group $\pi_0(C_G(u))=C_G(u)/C_G(u)^0$ is an abelian $2$-group. Since $\ell$ is an odd prime, $\ell$ does not divide $\lvert \pi_0(C_G(u))\rvert$. Hence, $\sigma$ acts trivially on $\pi_0(C_G(u))$, and we get the required bijection.
\end{remark}
\subsubsection{}
Let $G$ be a connected reductive group defined over $\mathbb{F}_q$ with the corresponding Frobenius map $F$, and assume that $G$ has the property $\mathscr{D}$-orbit. Let $\Pi$ be a unipotent cuspidal $\ell$-adic representation of $G^{F^\ell}$ such that $\Pi\simeq \Pi^\sigma$. Let $C$ be the unipotent support of $\Pi$, and let $[w]\in C^{F^\ell}/G^{F^\ell}$ be a $\sigma$-stable orbit such that (\ref{mult_1_cusp_unip}) holds. Let $u\in G^F$ be such that $u$ belongs to the conjugacy class $[w]$. Let $U_{1.5}$ be the $F$-stable subgroup of $G$ associated with $u$. Since $w$ is $G^{F^\ell}$-conjugate to $u$, we have 
\begin{equation}\label{mult_1_iden}
{\rm dim}_{\mathcal{K}}\,{\rm Hom}_{G^{F^\ell}}(\Gamma_u^{G^{F^\ell}}\otimes_\Lambda\mathcal{K},\Pi) =1.
\end{equation}
Let $\mathcal{L}$ be a $G^{F^\ell}\rtimes\sigma$-stable $\Lambda$-lattice in $\Pi$. It follows from the identity (\ref{mult_1_iden}) that the space of coinvariants $\mathcal{L}_{U_{1.5}^{F^\ell},
\widetilde{\eta}_u}$ is a free $\Lambda$-module of rank one. Note that $\sigma$ acts trivially on this module. Hence, we have
\begin{equation}\label{Tate_coinv_free}
\widehat{\rm H}^0(\sigma,\mathcal{L}_{U_{1.5}^{F^\ell},
\widetilde{\eta}_u}) \simeq \overline{\mathbb{F}}_\ell,\,\,\,\,\,\, \widehat{\rm H}^1(\sigma, \mathcal{L}_{U_{1.5}^{F^\ell},
\widetilde{\eta}_u}) = 0. 
\end{equation}
Using these observations, we first deduce the vanishing of the first Tate cohomology group of unipotent cuspidal representations.
\begin{theorem}\label{first_tate_trivial}
Let $G$ be a connected reductive group of adjoint type defined over $\mathbb{F}_q$ with corresponding Frobenius map $F$. Assume that $G$ is simple of adjoint type such that $G$ has property $\mathscr{D}$-orbit, and $p$ is good for $G$. Let $\Pi$ be a unipotent cuspidal $\ell$-adic representation of $G^{F^\ell}$. Then, for any $G^{F^\ell}\rtimes\sigma$-stable $\Lambda$-lattice $\mathcal{L}$ in $\Pi$, we have 
$$ \widehat{\rm H}^1(\sigma,\mathcal{L}) = 0. $$
\end{theorem}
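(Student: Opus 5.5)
Since $\mathscr{P}$ is a progenerator of the category of $\ell$-modular representations of $G^F$, it suffices to show ${\rm Hom}_{G^F}(\mathscr{P},\widehat{\rm H}^1(\sigma,\mathcal{L}))=0$. (The statement tacitly assumes $\Pi\simeq\Pi^\sigma$, which is needed for $\mathcal{L}$ to exist, and then Lemma \ref{sigma_action_uni_cusp} gives the $\sigma$-action.) By Proposition \ref{Tate_cuspidal}, $M:=\widehat{\rm H}^1(\sigma,\mathcal{L})$ is cuspidal. For a summand $R_L^G(\overline{\Gamma}_C^{L^F})$ with $L$ a proper $1$-split Levi subgroup, Harish-Chandra biadjointness gives ${\rm Hom}_{G^F}(R_L^G(\overline{\Gamma}_C^{L^F}),M)\simeq {\rm Hom}_{L^F}(\overline{\Gamma}_C^{L^F},{}^*R_L^G(M))=0$. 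Hence only the terms with $L=G$ survive, and
$$ {\rm dim}\,{\rm Hom}_{G^F}(\mathscr{P},M)=\sum_{C}\sum_{[u]\in C^F/G^F}{\rm dim}\,{\rm Hom}_{G^F}(\overline{\Gamma}_u^{G^F},M), $$
with $C$ running over the $F$-stable cuspidal unipotent classes. By Frobenius reciprocity, ${\rm Hom}_{G^F}(\overline{\Gamma}_u^{G^F},M)$ is dual to the coinvariant space $M_{U_{1.5}^F,\overline{\widetilde{\psi}}_u}$. So the goal becomes: this coinvariant space vanishes for every such $u$.

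By Lemma \ref{Tate_injection}, $M_{U_{1.5}^F,\overline{\widetilde{\psi}}_u}$ injects into $\widehat{\rm H}^1(\sigma,\mathcal{L}_{U_{1.5}^{F^\ell},\widetilde{\eta}_u})$. The $\Lambda$-module $\mathcal{L}_{U_{1.5}^{F^\ell},\widetilde{\eta}_u}$ is a torsion-free quotient of $\mathcal{L}$. It is torsion-free because it equals the eigenspace $\mathcal{L}^{U,\widetilde{\eta}_u}$, as $|U|$ is invertible. Its rank is ${\rm dim}\,{\rm Hom}(\Gamma_u^{G^{F^\ell}}\otimes\mathcal{K},\Pi)=\langle\gamma_u^{G^{F^\ell}},\chi_\Pi\rangle$. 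We now split into cases.

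First, if $u$ lies in $C_\Pi$ and $[u]$ is the distinguished $\sigma$-stable orbit of Lemma \ref{sigma_stable_mult_1_gggr}, then (\ref{Tate_coinv_free}) gives vanishing of $\widehat{\rm H}^1$. Second, if $u$ lies in some other class $C$, or in another orbit of $C_\Pi^{F^\ell}$, then the multiplicity $\langle\gamma_u^{G^{F^\ell}},\chi_\Pi\rangle$ must vanish, so the coinvariant lattice is zero. For $u\in C_\Pi^F$ this comes from (\ref{sigma_stable_mult_1}) together with property $\mathscr{D}$-orbit: the map $\iota_\sigma$ is injective, so distinct $G^F$-orbits in $C_\Pi^F$ give distinct $\sigma$-stable $G^{F^\ell}$-orbits, and exactly one of these carries multiplicity one. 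For $C\neq C_\Pi$ I would invoke \cite[Proposition 3.1]{Dudas_Malle_irr_mod_l}, applied to $G^{F^\ell}$, for $G$ simple adjoint with $p$ good. It says that a unipotent cuspidal character has zero multiplicity in $\gamma_u$ for $u$ in any $F^\ell$-stable cuspidal unipotent class other than its support. This is what is needed here, since any $F$-stable cuspidal class is also $F^\ell$-stable.

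The main obstacle is this last step. One must make sure the Dudas--Malle vanishing statement covers exactly the classes that are cuspidal for $G$ with respect to $F$, when regarded inside $G^{F^\ell}$. Cuspidality of a class is defined via $1$-split Levi subgroups, and this notion changes from $F$ to $F^\ell$. The honest fix is to bypass cuspidality entirely. For arbitrary $u\in G^F$, compare with the Lusztig bound: if $\dim C>\dim C_{\Pi^*}$ the multiplicity is zero. For smaller classes, I would instead apply the Dudas--Malle argument at level $F^\ell$: a cuspidal unipotent $\Pi$ satisfies $\Pi^*=\pm\Pi$, and it occurs with total multiplicity one in the whole progenerator $\mathscr{P}_{F^\ell}$. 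That forces zero multiplicity in every $\Gamma_u^{G^{F^\ell}}$ appearing in $\mathscr{P}_{F^\ell}$ other than the distinguished one. The point to verify is that each $\Gamma_u^{G^F}$ in $\mathscr{P}$ with $L=G$ corresponds to a $\Gamma_u^{G^{F^\ell}}$ appearing in $\mathscr{P}_{F^\ell}$, or at least in a projective containing $\Pi$ with multiplicity one. Granting this, every coinvariant space vanishes, so ${\rm Hom}_{G^F}(\mathscr{P},M)=0$, and therefore $M=0$.
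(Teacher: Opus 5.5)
Your argument follows the paper's proof step for step. Cuspidality of $\widehat{\rm H}^1(\sigma,\mathcal{L})$ kills the summands of $\mathscr{P}$ with $L\neq G$. Frobenius reciprocity turns the remaining Hom-spaces into $(U_{1.5}^F,\overline{\widetilde{\psi}}_u)$-coinvariants, and Lemma \ref{Tate_injection} embeds these into $\widehat{\rm H}^1(\sigma,\mathcal{L}_{U_{1.5}^{F^\ell},\widetilde{\eta}_u})$. That group vanishes because the coinvariant lattice is free of rank $\langle\gamma_u^{G^{F^\ell}},\chi_\Pi\rangle\in\{0,1\}$ with trivial $\sigma$-action.

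The step you single out is a genuine subtlety, and the paper does not treat it either. The paper simply asserts ${\rm Hom}_{G^{F^\ell}}(\Gamma_D^{G^{F^\ell}}\otimes_\Lambda\mathcal{K},\Pi)=0$ for $D\neq C_\Pi$ by appeal to \cite[Proposition 3.1]{Dudas_Malle_irr_mod_l}. Read literally this fails for $D=\{1\}$; it is only needed for $F$-cuspidal $D$. But that proposition, applied to $G^{F^\ell}$, only covers classes cuspidal for $F^\ell$. Your proposed workaround does not bypass cuspidality: what you ``grant'' at the end is exactly the claim that an $F$-stable class cuspidal for $(G,F)$ is cuspidal for $(G,F^\ell)$. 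So, as written, your proof rests on the same unproved step.

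\textbf{Closing the step.} Fix an $F$-stable pair $T\subseteq B$ and let $\tau$ be the permutation of $\Delta$ induced by $F$, so that $F^\ell$ induces $\tau^\ell$.

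First, the $1$-split Levi subgroups for $F$ (resp. $F^\ell$) are, up to rational conjugacy, the standard $L_J$ with $\tau(J)=J$ (resp. $\tau^\ell(J)=J$). Whether a $G$-class $C$ is non-cuspidal via $L$ depends only on the $G$-conjugacy class of $L$. Hence the two notions of cuspidal class agree whenever $\langle\tau^\ell\rangle=\langle\tau\rangle$. For $G$ simple and $\ell$ odd this fails only for ${}^3D_4$ with $\ell=3$.

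In that case $F^3$ is split, and a direct check suffices. The triality-stable classes of adjoint $D_4$ are $1$, $A_1$, $3A_1$, $A_2$, $D_4(a_1)$ and $D_4$.
\begin{enumerate}
\item The classes $1$, $A_1$, $3A_1$ are already non-cuspidal for $F$, via $T$, $L_{\{2\}}$ and $L_{\{1,3,4\}}$, so they never arise.
\item The classes $D_4(a_1)$ and $D_4$ are distinguished, hence cuspidal for any Frobenius.
\item The class $A_2$ has $A_G(u)\simeq\mathbb{Z}/2$, while $A_L(u)=1$ for every proper Levi subgroup $L$ it meets. These are of type $A_2$ or $A_3$, with connected centre since $G$ is adjoint. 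So $A_2$ remains cuspidal for split $D_4$.
\end{enumerate}
With this, \cite[Proposition 3.1]{Dudas_Malle_irr_mod_l} at level $F^\ell$ gives the vanishing you need, and your proof closes.

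A side remark: for $\widehat{\rm H}^1$ alone you do not need the $\mathscr{D}$-orbit property on $C_\Pi$. Every coinvariant lattice attached to $u\in C_\Pi^F$ has rank at most one, by multiplicity one in $\Gamma_{C_\Pi}^{G^{F^\ell}}$. Moreover, $\sigma$ acts trivially on a rank-one $\Lambda$-lattice, since $\Lambda$ has no nontrivial $\ell$-th roots of unity.
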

\begin{proof}
Let $\mathcal{L}$ be a $\Lambda[G^{F^\ell}\rtimes\sigma]$-stable lattice in $\Pi$. Since the Tate cohomology group $\widehat{\rm H}^1(\sigma,\mathcal{L})$ is cuspidal (by Proposition \ref{Tate_cuspidal}), it follows from the Frobenius reciprocity that
\begin{equation}\label{first_tate_equ_1}
{\rm Hom}_{G^F}\big(R_L^G(\overline{\Gamma}_C^{L^F}),
\widehat{\rm H}^1(\sigma,\mathcal{L})\big) = 0,
\end{equation}
for all proper $1$-split Levi subgroups $L$ of $G$ and for all $F$-stable unipotent classes $C$ of $L$ which are cuspidal for $L_{\rm ad}$. 

Let $C_\Pi$ be the unipotent support of $\Pi$. It follows from \cite[Proposition 3.1]{Dudas_Malle_irr_mod_l} that 
$$ {\rm dim}_{\mathcal{K}}\,{\rm Hom}_{G^{F^\ell}}(\Gamma_{C_\Pi}^{G^{F^\ell}}\otimes_\Lambda\mathcal{K},\Pi) =1 $$
or equivalently,
$$ \sum_{[w]\in C_\Pi^{F^\ell}/G^{F^\ell}}{\rm dim}_{\mathcal{K}}\,{\rm Hom}_{G^{F^\ell}}(\Gamma_{C_\Pi}^{G^{F^\ell}}
\otimes_\Lambda\mathcal{K},\Pi) =1, $$
where $C_{\Pi}^{F^\ell}/G^{F^\ell}$ is the set of distinct $G^{F^\ell}$-orbits in $C_\Pi^{F^\ell}$. Note that $C_\Pi$ is $F$-stable. By Lemma \ref{sigma_stable_mult_1_gggr}, there exists a $\sigma$-stable orbit $[w]\in C^{F^\ell}/G^{F^\ell}$ such that
$$ {\rm dim}_{\mathcal{K}}\,{\rm Hom}_{G^F}(\Gamma_w^{G^{F^\ell}}\otimes_\Lambda\mathcal{K}, \Pi) = 1,$$
and we have
$$ {\rm dim}_{\mathcal{K}}\,{\rm Hom}_{G^F}(\Gamma_{w'}^{G^{F^\ell}}\otimes_\Lambda\mathcal{K}, \Pi) = 0, $$
for all $\sigma$-stable orbits $[w']\in C^{F^\ell}/G^{F^\ell}$ with $[w']\ne [w]$. Since $G$ has the property $\mathscr{D}$-orbit, there exist $u, u'\in G^F$ with $[u]\ne [u']$ in $C_\Pi^F/G^F$ such that $[u]=[w]$ and $[u']=[w']$ in $C_\Pi^{F^\ell}/G^{F^\ell}$. Now, using Frobenius reciprocity, we get
$$ {\rm dim}_{\mathcal{K}}(\Pi_{U_{1.5}^{F^\ell},\widetilde{\eta}_u}) = 1,\,\,\,\,\,\, {\rm dim}_{\mathcal{K}}(\Pi_{U_{1.5}'^{F^\ell},\widetilde{\eta}_{u'}}) = 0, $$
where $U_{1.5}'$ is the $F$-stable subgroup of $G$ associated with the unipotent element $u'$. Thus, the space of coinvariants $\mathcal{L}_{U_{1.5}^{F^\ell},
\widetilde{\eta}_u}$ is a free $\Lambda$-module of rank one, and $\mathcal{L}_{U_{1.5}'^{F^\ell},
\widetilde{\eta}_{u'}} = 0$ for $[u']\ne [u]$ in $C_\Pi^{F}/G^{F}$. Hence, 
$$ \widehat{\rm H}^1(\sigma,\mathcal{L}_{U_{1.5}^{F^\ell},
\widetilde{\eta}_u}) = 0,\,\,\,\,\,\, \widehat{\rm H}^1(\sigma,\mathcal{L}_{U_{1.5}'^{F^\ell},
\widetilde{\eta}_{u'}})=0. $$
Using Lemma \ref{Tate_injection}, we get that 
$\widehat{\rm H}^1(\sigma,\mathcal{L})_{U_{1.5}^F,\overline{\widetilde{\psi}}_u}$ is trivial for all $u\in C_\Pi^F$--which is equivalent to saying
\begin{equation}\label{first_Tate_equ_2}
{\rm Hom}_{G^F}(\overline{\Gamma}_{C_\Pi}^{G^F},\widehat{\rm H}^1(\sigma,\mathcal{L})) = \{0\}.
\end{equation}
For an $F$-stable unipotent class $D$ in $G$ such that $D\ne C_\Pi$, we have 
$$ {\rm dim}_{\mathcal{K}}\,{\rm Hom}_{G^{F^\ell}}(\Gamma_D^{G^{F^\ell}}\otimes_\Lambda
\mathcal{K},\Pi) =0. $$
Using a similar line arguments as in the case of $C_\Pi$, we get that $\widehat{\rm H}^1(\sigma,\mathcal{L})_{U_{1.5}^F,\overline{\widetilde{\psi}}_u}$ is trivial for all $u\in D^F$, and hence
\begin{equation}\label{first_Tate_equ_3}
{\rm Hom}_{G^F}(\overline{\Gamma}_D^{G^F},\widehat{\rm H}^1(\sigma,\mathcal{L})) = \{0\}.
\end{equation}
From (\ref{first_tate_equ_1}), (\ref{first_Tate_equ_2}) and (\ref{first_Tate_equ_3}), we obtain 
$$ {\rm Hom}_{G^F}(\mathscr{P}, \widehat{\rm H}^1(\sigma,\mathcal{L})) = \{0\}. $$
Since $\mathscr{P}$ is a progenerator, the functor ${\rm Hom}_{G^F}(\mathscr{P},\_)$ is faithful. Hence, the space $\widehat{\rm H}^1(\sigma,\mathcal{L})$ is trivial. 
\end{proof}
\subsubsection{}
Next, we prove the irreducibility of the Tate cohomology group $\widehat{\rm H}^0(\sigma,\mathcal{L})$. We start by proving the following lemma. 
\begin{lemma}\label{Pro_gen_1}
Let $G$ be as in Theorem \ref{first_tate_trivial}. Assume that $p$ is good for $G$. Let $\mathscr{P}$ be the progenerator as defined by (\ref{progenerator}). Then we have 
$$ {\rm dim}_{\overline
{\mathbb{F}}_\ell}\,{\rm Hom}_{G^F}
(\mathscr{P},\widehat{\rm H}^0(\sigma,
\mathcal{L}))\leq 1. $$
\end{lemma}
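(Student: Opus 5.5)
We follow the proof of Theorem \ref{first_tate_trivial}, replacing $\widehat{\rm H}^1$ by $\widehat{\rm H}^0$ and tracking the single term that survives. Write $\mathscr{P}=\bigoplus_{(L,C)\in\mathscr{L}}R_L^G(\overline{\Gamma}_C^{L^F})$. By Proposition \ref{Tate_cuspidal}, $\widehat{\rm H}^0(\sigma,\mathcal{L})$ is cuspidal. Biadjointness of $R_L^G$ and ${}^*R_L^G$ then kills every summand with $L$ a proper $1$-split Levi subgroup. It remains to bound
$$ \sum_{D}{\rm dim}_{\overline{\mathbb{F}}_\ell}\,{\rm Hom}_{G^F}\big(\overline{\Gamma}_D^{G^F},\widehat{\rm H}^0(\sigma,\mathcal{L})\big)=\sum_{D}\sum_{[u]\in D^F/G^F}{\rm dim}\,\widehat{\rm H}^0(\sigma,\mathcal{L})_{U_{1.5}^F,\overline{\widetilde{\psi}}_u}, $$
where $D$ runs over the $F$-stable cuspidal unipotent classes of $G$. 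The equality comes from Frobenius reciprocity: $\overline{\Gamma}_u^{G^F}$ is induced from the one-dimensional character $\overline{\widetilde{\psi}}_u$ of the $p$-group $U_{1.5}^F$, and the coinvariants of a $p$-group in characteristic $\ell$ coincide with the invariants.

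For each $u\in D^F$, Lemma \ref{Tate_injection} gives an injection
$$ \widehat{\rm H}^0(\sigma,\mathcal{L})_{U_{1.5}^F,\overline{\widetilde{\psi}}_u}\hookrightarrow\widehat{\rm H}^0\big(\sigma,\mathcal{L}_{U_{1.5}^{F^\ell},\widetilde{\eta}_u}\big). $$
The module $\mathcal{L}_{U_{1.5}^{F^\ell},\widetilde{\eta}_u}$ is a $\Lambda$-lattice in $\Pi_{U_{1.5}^{F^\ell},\widetilde{\eta}_u}$, and the latter has $\mathcal{K}$-dimension ${\rm dim}\,{\rm Hom}_{G^{F^\ell}}(\Gamma_u^{G^{F^\ell}}\otimes\mathcal{K},\Pi)$. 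We now split into cases.

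\emph{Case $D\neq C_\Pi$.} By \cite[Proposition 3.1]{Dudas_Malle_irr_mod_l}, $\Pi$ does not occur in $\Gamma_D^{G^{F^\ell}}\otimes\mathcal{K}$, so the lattice of coinvariants is zero and the term vanishes.

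\emph{Case $D=C_\Pi$.} The multiplicity-one identity (\ref{mult_1_gggr}) shows that $\Pi$ occurs in $\Gamma_w^{G^{F^\ell}}$ for exactly one orbit $[w]\in C_\Pi^{F^\ell}/G^{F^\ell}$. By Lemma \ref{sigma_stable_mult_1_gggr}, this orbit is $\sigma$-stable. By property $\mathscr{D}$-orbit, the map $\iota_\sigma$ is a bijection, so exactly one $G^F$-orbit $[u]\in C_\Pi^F/G^F$ maps to $[w]$. Every other $[u']\in C_\Pi^F/G^F$ has zero coinvariants for $\Pi$, hence a zero term. For $[u]$ itself, $\mathcal{L}_{U_{1.5}^{F^\ell},\widetilde{\eta}_u}$ is free of rank one over $\Lambda$. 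Since $\sigma$ has order $\ell$, $\sigma$ acts on this rank-one module by an $\ell$-th root of unity in $\Lambda$, and such a root of unity must be $1$ because $\mathcal{K}$ is unramified. So $\sigma$ acts trivially, which is (\ref{Tate_coinv_free}), and $\widehat{\rm H}^0\simeq\overline{\mathbb{F}}_\ell$. The injection then bounds this term by $1$.

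Summing all the terms gives the bound $\leq 1$. The main obstacle is the bookkeeping in the case $D=C_\Pi$. We must check that the $\sigma$-action on $\Gamma_u^{G^{F^\ell}}$ is the one compatible with Lemma \ref{Tate_injection}, namely that $\widetilde{\eta}_u$ is $\sigma$-invariant and that $U_{1.5}$ is $F$-stable. We must also check that injectivity of $\iota_\sigma$ prevents two distinct $G^F$-orbits from both landing in $[w]$. Surjectivity of $\iota_\sigma$ is not needed for the upper bound.
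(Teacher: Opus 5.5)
Your proposal is correct and is essentially the paper's proof. Cuspidality of $\widehat{\rm H}^0(\sigma,\mathcal{L})$ kills the proper-Levi summands, Frobenius reciprocity reduces the remaining terms to $(U_{1.5}^F,\overline{\widetilde{\psi}}_u)$-coinvariants, and Lemma \ref{Tate_injection} together with multiplicity one, Lemma \ref{sigma_stable_mult_1_gggr} and the $\mathscr{D}$-orbit property bounds the $C_\Pi$ term by $1$ and kills the others. Your two additions are also correct and sharpen points the paper leaves implicit: you justify the trivial $\sigma$-action on the rank-one coinvariant lattice by the absence of nontrivial $\ell$-th roots of unity in $\mathcal{K}$ (which uses the standing assumption $\ell\neq 2$), and you observe that only injectivity of $\iota_\sigma$ is needed.
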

\begin{proof}
Since $\Pi$ is cuspidal, the Tate cohomology group $\widehat{\rm H}^0(\sigma,\mathcal{L})$ is also cuspidal by Proposition \ref{Tate_cuspidal}. Therefore, for any proper $1$-split Levi subgroup $L$ of $G$, we have $^\ast R_L^G(\widehat{\rm H}^0(\sigma,\mathcal{L}))=0$. Frobenius reciprocity gives
\begin{equation}\label{equ_5}
{\rm Hom}_{G^F}(R_L^G(\overline{\Gamma}_{C}^{L^F}),\widehat{\rm H}^0(\sigma,\mathcal{L})) \simeq 
{\rm Hom}_{L^F}\big(\overline{\Gamma}_{C}^{L^F},\,^\ast R_L^G(\widehat{\rm H}^0(\sigma,\mathcal{L}))\big) = \{0\},
\end{equation}
for all $F$-stable unipotent classes $C$ in $L$ that are cuspidal for $L_{\rm ad}$. Let $C_\Pi$ be the unipotent support of $\Pi$. Since $\Pi\simeq \Pi^\sigma$, uniqueness of $C_\Pi$ shows that $C_\Pi$ is $F$-stable. Note that 
$$ {\rm dim}_{\mathcal{K}}\,{\rm Hom}_{G^{F^\ell}}(\Gamma_{C_\Pi}^{G^{F^\ell}}\otimes_\Lambda\mathcal{K},\Pi) = 1. $$
By applying Lemma \ref{sigma_stable_mult_1_gggr} and the $\mathscr{D}$-orbit property of $G$, there exists a unipotent class $[u]$ in $C_\Pi^{F^\ell}/G^{F^\ell}$ with $\sigma(u)=u$ such that
$$ {\rm dim}_{\mathcal{K}}\,{\rm Hom}_{G^{F^\ell}}(\Gamma_u^{G^{F^\ell}}\otimes_\Lambda\mathcal{K},\Pi) = 1 $$
and 
$$ {\rm dim}_{\mathcal{K}}\,{\rm Hom}_{G^{F^\ell}}(\Gamma_{u'}^{G^{F^\ell}}\otimes_\Lambda\mathcal{K},\Pi) = 0 $$
for all $[u']\in (C_\Pi^{F^\ell}/G^{F^\ell})^\sigma$ with $\sigma(u')=u'$ and $[u]\ne [u']$. The above two identities shows that $\mathcal{L}_{U_{1.5}^{F^\ell},\widetilde{\eta}_u}$ is a free $\Lambda$-module of rank one, and $\mathcal{L}_{U_{1.5}'^{F^\ell},\widetilde{\eta}_{u'}}$ is trivial. Since $\sigma$ acts trivially on these modules, using (\ref{Tate_coinv_free}) and Lemma \ref{Tate_injection}, we get that
$$ {\rm dim}_{\overline{\mathbb{F}}_\ell} \widehat{\rm H}^0(\sigma,\mathcal{L})_{U_{1.5}^F,\overline{\widetilde{\psi}}_u} \leq 1, $$
for all $[u]$ in $C_\Pi^F/G^F$. Since $U_{1.5}^F$ is a $p$-group and $\ell\ne p$, we have ${\rm Hom}_{U_{1.5}^F}(\widetilde{\psi}_u,\widehat{\rm H}^0(\sigma,\mathcal{L}))\simeq \widehat{\rm H}^0(\sigma,\mathcal{L})_{U_{1.5}^F,\widetilde{\psi}_u}$. Then it follows from Frobenius reciprocity that
\begin{equation}\label{equ_1}
{\rm dim}_{\overline{\mathbb{F}}_\ell}\,{\rm Hom}_{G^F}(\overline{\Gamma}_{C_\Pi}^{G^F}, \widehat{\rm H}^0(\sigma,\mathcal{L}))\leq 1.
\end{equation}
Let $C'\ne C_\Pi$ be an $F$-stable cuspidal unipotent class in $G$. Since 
$$ {\rm dim}_{\mathcal{K}}\,{\rm Hom}_{G^{F^\ell}}(\Gamma_{C'}^{G^{F^\ell}}\otimes_\Lambda\mathcal{K},\Pi) = 0, $$
it follows from a similar argument as in the case of $C_\Pi$ that 
\begin{equation}\label{equ_2}
{\rm dim}_{\overline{\mathbb{F}}_\ell}\, {\rm Hom}_{G^{F}}(\overline{\Gamma}_{C'}^{G^F}, \widehat{\rm H}^0(\sigma,\mathcal{L})) = 0.
\end{equation}
Then it follows from (\ref{equ_5}), (\ref{equ_1}) and (\ref{equ_2}) that
$$ {\rm dim}_{\overline{\mathbb{F}}_\ell}\, {\rm Hom}_{G^F}(\mathscr{P},\widehat{\rm H}^0(\sigma,\mathcal{L}))\leq 1. $$
Hence, we get the required lemma.
\end{proof}
Now, we deduce the following theorem.
\begin{theorem}\label{Tate_irr}
Let $G$ be a connected reductive group with a Frobenius morphism $F$ on $G$ such that $G$ is simple of adjoint type. Assume that $G$ has the property $\mathscr{D}$-orbit and that $p$ is good for $G$. Let $\Pi$ be a cuspidal unipotent $\ell$-adic representation of $G^{F^\ell}$ such that $\Pi^\sigma\simeq \Pi$. Then, for any $\Lambda[G^{F^\ell}\rtimes\sigma]$-stable lattice $\mathcal{L}$ in $\Pi$, the Tate cohomology group $\widehat{\rm H}^0(\sigma,\mathcal{L})$ is irreducible as a representation of $G^F$.
\end{theorem}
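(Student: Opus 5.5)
The plan is to combine Theorem \ref{first_tate_trivial}, Lemma \ref{char_iden} and Lemma \ref{Pro_gen_1}. It has two steps: first show that $\widehat{\rm H}^0(\sigma,\mathcal{L})\neq 0$, then use the multiplicity bound $\dim\,{\rm Hom}_{G^F}(\mathscr{P},\widehat{\rm H}^0(\sigma,\mathcal{L}))\le 1$ to rule out more than one composition factor.

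\emph{Non-vanishing.} By Theorem \ref{first_tate_trivial}, $\widehat{\rm H}^1(\sigma,\mathcal{L})=0$. Since $\mathcal{L}$ is a free $\Lambda$-module of finite rank carrying an action of $G^{F^\ell}\rtimes\langle\sigma\rangle$, Lemma \ref{char_iden} applies with $J=\langle\sigma\rangle$. It gives
$$ {\rm Tr}(g|_{\widehat{\rm H}^0(\sigma,\mathcal{L})})=\overline{{\rm Tr}(g|_{\mathcal{L}})}=\overline{\chi_\Pi(g)} $$
for all $g\in G^F$. Taking $g=1$ yields $\dim\widehat{\rm H}^0(\sigma,\mathcal{L})\equiv \dim\Pi \pmod \ell$. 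So it suffices to know that $\ell\nmid\dim\Pi$, or more generally that $\overline{\chi_\Pi}$ is not identically zero on $G^F$.

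I expect this to be the main obstacle, since $\ell$ can divide degrees of unipotent cuspidal characters. I would first try the Euler-characteristic form of the argument. Because $\widehat{\rm H}^1$ vanishes, the Herbrand quotient of $\mathcal{L}$ shows that $\widehat{\rm H}^0$ has $\overline{\mathbb{F}}_\ell$-dimension equal to the number of trivial summands minus the number of $\Lambda[\sigma]$-free summands of $\mathcal{L}\otimes\mathbb{Z}/\ell$-type, equivalently $(\dim\Pi^{\sigma}-\text{rank of augmentation part})$. Concretely, write $\mathcal{L}\otimes\mathcal{K}\simeq a\cdot 1\oplus b\cdot\mathcal{K}[\sigma]$ as $\sigma$-representations, with $\widehat{\rm H}^0$ of dimension $a$ in the split case.

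If that route fails, I would fall back on the Whittaker-type vector instead. By (\ref{Tate_coinv_free}), the coinvariants $\mathcal{L}_{U_{1.5}^{F^\ell},\widetilde{\eta}_u}$ form a rank-one module with trivial $\sigma$-action, and $\widehat{\rm H}^0$ of this module is $\overline{\mathbb{F}}_\ell$. Applying the long exact sequence of Tate cohomology to
$$ 0\to\mathcal{L}(U_\ell,\widetilde{\eta}_u)\to\mathcal{L}\to\mathcal{L}_{U_\ell,\widetilde{\eta}_u}\to 0, $$
the map $\widehat{\rm H}^0(\sigma,\mathcal{L})\to\widehat{\rm H}^0(\sigma,\mathcal{L}_{U_\ell,\widetilde{\eta}_u})$ is followed by the connecting map into $\widehat{\rm H}^1(\sigma,\mathcal{L}(U_\ell,\widetilde{\eta}_u))$. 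The kernel $\mathcal{L}(U_\ell,\widetilde{\eta}_u)$ is a direct summand of $\mathcal{L}$ as a $\Lambda[\sigma]$-module, because projection onto the $\widetilde{\eta}_u$-part is a $\sigma$-equivariant averaging idempotent with $p$ invertible. Hence its $\widehat{\rm H}^1$ injects into $\widehat{\rm H}^1(\sigma,\mathcal{L})=0$. The map above is therefore surjective onto $\overline{\mathbb{F}}_\ell$, and $\widehat{\rm H}^0(\sigma,\mathcal{L})\ne0$. I would take this summand argument as the primary route and keep the character identity as a consistency check.

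\emph{Irreducibility.} Let $V=\widehat{\rm H}^0(\sigma,\mathcal{L})\neq 0$. Suppose $V$ has a proper nonzero subrepresentation $W$. Then both $W$ and $V/W$ are nonzero. Since $\mathscr{P}$ is projective, ${\rm Hom}_{G^F}(\mathscr{P},-)$ is exact, so
$$ \dim{\rm Hom}(\mathscr{P},V)=\dim{\rm Hom}(\mathscr{P},W)+\dim{\rm Hom}(\mathscr{P},V/W). $$
Both terms on the right are at least $1$ by the generator property of $\mathscr{P}$, so the total is at least $2$. This contradicts Lemma \ref{Pro_gen_1}, and hence $V$ is irreducible.
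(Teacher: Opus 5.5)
Your proof is correct. The irreducibility step is the paper's argument: exactness of ${\rm Hom}_{G^F}(\mathscr{P},-)$, the generator property, and Lemma \ref{Pro_gen_1}. Your non-vanishing argument, however, takes a genuinely different route.

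\textbf{The paper's route.} It combines Theorem \ref{first_tate_trivial} with Lemma \ref{char_iden} to get ${\rm Tr}(g|\widehat{\rm H}^0(\sigma,\mathcal{L}))=\overline{\chi_\Pi(g)}$ for $g\in G^F$. It then invokes irreducibility of $r_\ell(\Pi)$ (Theorem \ref{mod_l_irr_unip_cusp}) to say this trace is nonzero.

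\textbf{Your route.} You use the averaging idempotent $e$ for $(U_{1.5}^{F^\ell},\widetilde{\eta}_u)$. It commutes with $\sigma$ because $U_{1.5}^{F^\ell}$ is $\sigma$-stable and $\widetilde{\eta}_u$ is $\sigma$-invariant; this is where the choice $u\in G^F$ from the $\mathscr{D}$-orbit property enters. You thereby split $\mathcal{L}=e\mathcal{L}\oplus\mathcal{L}(U_{1.5}^{F^\ell},\widetilde{\eta}_u)$ as $\Lambda[\sigma]$-modules. Here $e\mathcal{L}\simeq\mathcal{L}_{U_{1.5}^{F^\ell},\widetilde{\eta}_u}$ is free of rank one with trivial $\sigma$-action by (\ref{Tate_coinv_free}).

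\textbf{What your route buys.}
\begin{itemize}
\item Additivity of Tate cohomology on this splitting already makes $\overline{\mathbb{F}}_\ell$ a direct summand of $\widehat{\rm H}^0(\sigma,\mathcal{L})$. So you do not even need $\widehat{\rm H}^1(\sigma,\mathcal{L})=0$ or the connecting map.
\item You avoid Theorem \ref{mod_l_irr_unip_cusp}, whose hypothesis that $\ell$ be good is not among the hypotheses of this statement.
\item You sidestep exactly the obstacle you flagged. Irreducibility of $r_\ell(\Pi)$ as a $G^{F^\ell}$-module does not by itself force $\overline{\chi_\Pi}$ to be nonzero somewhere on the subgroup $G^F$. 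At $g=1$ the trace identity is vacuous when $\ell\mid\dim\Pi$.
\item Combined with Lemma \ref{Tate_injection}, your surjection onto the one-dimensional $\widehat{\rm H}^0(\sigma,\mathcal{L}_{U_{1.5}^{F^\ell},\widetilde{\eta}_u})$ also shows that the bound in Lemma \ref{Pro_gen_1} is attained.
\end{itemize}

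\textbf{What the paper's route buys.} It identifies the trace of $\widehat{\rm H}^0(\sigma,\mathcal{L})$ with $\overline{\chi_\Pi}|_{G^F}$. That is useful for the intended character comparisons, but it is not needed for irreducibility.

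Your discarded Herbrand-quotient paragraph is inaccurate. Free $\Lambda[\sigma]$-summands contribute to neither Tate group, and with $\widehat{\rm H}^1=0$ that route would only reduce the question to ${\rm Tr}(\sigma|\Pi)\neq 0$. It plays no role in your final argument.
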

\begin{proof}
Consider the the composition series of $\widehat{\rm H}^0(\sigma,\mathcal{L})$:
$$ 0=\xi_0\subseteq \xi_1\subseteq\cdots\subseteq \xi_r=\widehat{\rm H}^0(\sigma,\mathcal{L}). $$
By the exactness of the functor ${\rm Hom}_{G^F}(\mathscr{P},\_)$, we have
$$ {\rm dim}_{\overline{\mathbb{F}}_\ell}\,{\rm Hom}_{G^F}(\mathscr{P},\widehat{\rm H}^0(\sigma,\mathcal{L})) = \sum_{i=1}^r {\rm dim}_{\overline{\mathbb{F}}_\ell}\,{\rm Hom}_{G^F}(\mathscr{P},\xi_i/\xi_{i-1}). $$
Since each $\xi_i/\xi_{i-1}$ is irreducible, the space ${\rm Hom}_{G^F}(\mathscr{P},\xi_i/\xi_{i-1})$ is non-trivial. Hence, 
$$ {\rm dim}_{\overline{\mathbb{F}}_\ell}\,{\rm Hom}_{G^F}(\mathscr{P},\xi_i/\xi_{i-1})\geq 1, $$
for each $i$.
Since ${\rm dim}_{\overline{\mathbb{F}}_\ell}\, {\rm Hom}_{G^F}(\mathscr{P},\widehat{\rm H}^0(\sigma,\mathcal{L})) = 1$ (by Lemma \ref{Pro_gen_1}), it follows from the above identity that $r\leq 1$. Note that $\widehat{\rm H}^1(\sigma,\mathcal{L})$ is trivial (by Theorem \ref{first_tate_trivial}). Then, applying Lemma \ref{char_iden}, we get the following relation
$$ {\rm Tr}(g|_{\widehat{\rm H}^0(\sigma,\mathcal{L})}) = \overline{\chi_\Pi(g)} $$
for all $g\in G^F$. Since the mod-$\ell$ reduction $r_\ell(\Pi)$ is irreducible (see Theorem \ref{mod_l_irr_unip_cusp}), its trace is non-zero, and it follows from the above trace identity that $\widehat{\rm H}^0(\sigma,\mathcal{L})$ is non-trivial. This implies that $r=1$, and hence proving the $G^F$-irreducibility of $\widehat{\rm H}^0(\sigma,\mathcal{L})$.
\end{proof}

\subsection{General Case}
In this section, we prove the irreducibility of the zeroth Tate cohomology group of a $\sigma$-stable unipotent cuspidal representation of $G^{F^\ell}$, where $G$ is a connected reductive group with Frobenius $F$ on $G$ having the property $\mathscr{D}$-orbit, and $G$ is not necessarily simple of adjoint type. We primarily follow the idea of the proof of \cite[Theorem 3.2]{Dudas_Malle_irr_mod_l}.
\begin{theorem}\label{Tate_irr_general}
Let $G$ be a connected reductive group with a Frobenius map $F$ defining an $\mathbb{F}_q$-rational structure on $G$ such that $G$ has property $\mathscr{D}$-orbit. Assume that $\ell$ and $p$ are good for $G$, and $\ell$ does not divide $p\,\lvert Z(G)^F/Z^0(G)^F\rvert$. Let $\Pi$ be a unipotent cuspidal $\ell$-adic representation of $G^{F^\ell}$ such that $\Pi\simeq \Pi^\sigma$. Then, for any $\Lambda[G^{F^\ell}\rtimes\sigma]$-stable lattice $\mathcal{L}$ in $\Pi$, the Tate cohomology group $\widehat{\rm H}^0(\sigma,\mathcal{L})$ is irreducible. Moreover, the first Tate cohomology group $\widehat{\rm H}^0(\sigma,\mathcal{L})$ is trivial.
\end{theorem}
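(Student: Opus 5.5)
We reduce to Theorem \ref{first_tate_trivial} and Theorem \ref{Tate_irr}, following the strategy of \cite[Theorem 3.2]{Dudas_Malle_irr_mod_l}. (The second assertion should read $\widehat{\rm H}^1(\sigma,\mathcal{L})=0$.)

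\emph{Step 1: pass to the adjoint quotient.} Let $\pi:G\to G_{\rm ad}=G/Z(G)$ be the adjoint quotient; it is $F$-equivariant. A unipotent representation of $G^{F^\ell}$ is trivial on $Z(G)^{F^\ell}$. Moreover $\pi(G^{F^\ell})$ is normal in $G_{\rm ad}^{F^\ell}$ with abelian quotient of order dividing $\lvert Z(G)^{F^\ell}/Z^0(G)^{F^\ell}\rvert$. Unipotent characters of $G_{\rm ad}^{F^\ell}$ restrict irreducibly to $\pi(G^{F^\ell})$, and cuspidality is preserved because $1$-split Levi subgroups correspond under $\pi$. Hence $\Pi$ is the inflation of the restriction to $\pi(G^{F^\ell})$ of a unipotent cuspidal $\Pi_{\rm ad}$ of $G_{\rm ad}^{F^\ell}$. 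Since the unipotent extension is unique and $\pi$ commutes with $\sigma$, we get $\Pi_{\rm ad}\simeq\Pi_{\rm ad}^\sigma$.

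\emph{Step 2: compare lattices and Tate cohomology.} The extension of $\Pi$ to $G^{F^\ell}\rtimes\sigma$ is unique by Lemma \ref{sigma_action_uni_cusp}. By the same lemma for $G_{\rm ad}$, $\mathcal{L}$ is a $\Lambda[\pi(G^{F^\ell})\rtimes\sigma]$-lattice in $\Pi_{\rm ad}$, with $\sigma$ acting compatibly. Tate cohomology depends only on the $\Lambda[\sigma]$-module $\mathcal{L}$, so $\widehat{\rm H}^i(\sigma,\mathcal{L})$ is the same space whether we view the group action through $G^F$ or through $\pi(G^F)\subseteq G_{\rm ad}^F$.

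The main obstacle is that $\mathcal{L}$ need not be $G_{\rm ad}^{F^\ell}$-stable. The first thing I would try is to replace $\mathcal{L}$ by $\mathcal{L}'=\sum_{g}g\mathcal{L}$, the sum over the finitely many cosets of $G_{\rm ad}^{F^\ell}/\pi(G^{F^\ell})$ (suitably $\sigma$-equivariantly). The index is prime to $\ell$, using the hypothesis on $\ell\nmid\lvert Z(G)^F/Z^0(G)^F\rvert$ together with the fact that $\ell$ is good. Both $\mathcal{L}$ and $\mathcal{L}'$ are $\sigma$-stable lattices in the $\pi(G^{F^\ell})$-irreducible $\Pi$ whose reduction is irreducible by Theorem \ref{mod_l_irr_unip_cusp}. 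So they are homothetic, $\mathcal{L}'=\ell^n\mathcal{L}$, and Tate cohomology is unchanged up to isomorphism. Thus we may assume $\mathcal{L}$ is $G_{\rm ad}^{F^\ell}\rtimes\sigma$-stable.

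\emph{Step 3: reduce to simple factors.} Write $G_{\rm ad}=\prod_i G_i^{n_i}$ with $F$ permuting the factors in each block, so that $G_{\rm ad}^{F}\simeq\prod_i G_i^{F^{n_i}}$ and $G_{\rm ad}^{F^\ell}\simeq\prod_i G_i^{F^{\ell n_i}}$, compatibly with $\sigma$. Here $\ell$ odd and coprime to the relevant indices keeps $\sigma$ preserving each block. Then $\Pi_{\rm ad}=\bigotimes_i\Pi_i$ with each $\Pi_i$ unipotent cuspidal and $\sigma$-stable. Each $G_i$ inherits $\mathscr{D}$-orbit and good $p$.

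Since $r_\ell(\Pi_i)$ is irreducible, a tensor product of lattices $\bigotimes\mathcal{L}_i$ is homothetic to $\mathcal{L}$. The K\"unneth formula for Tate cohomology of a cyclic group of prime order then gives
$$\widehat{\rm H}^0(\sigma,\mathcal{L})\simeq\bigotimes_i\widehat{\rm H}^0(\sigma,\mathcal{L}_i)\oplus(\text{terms involving }\widehat{\rm H}^1(\sigma,\mathcal{L}_i)),$$
together with the analogous formula for $\widehat{\rm H}^1$. By Theorem \ref{first_tate_trivial}, every $\widehat{\rm H}^1(\sigma,\mathcal{L}_i)$ vanishes, so $\widehat{\rm H}^1(\sigma,\mathcal{L})=0$ and $\widehat{\rm H}^0(\sigma,\mathcal{L})=\bigotimes_i\widehat{\rm H}^0(\sigma,\mathcal{L}_i)$. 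By Theorem \ref{Tate_irr} each factor is irreducible, so the tensor product is an irreducible $G_{\rm ad}^F$-module.

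\emph{Step 4: restrict to $\pi(G^F)$.} It remains to see that this module stays irreducible on restriction to $\pi(G^F)$. The quotient $G_{\rm ad}^F/\pi(G^F)$ is abelian of order prime to $\ell$, so by Clifford theory it suffices to show the restriction has a one-dimensional endomorphism ring. Lemma \ref{char_iden} identifies the Brauer character of $\widehat{\rm H}^0(\sigma,\mathcal{L})$ on $G^F$ with $\overline{\chi_\Pi}$ restricted to $G^F$. I would then use that $\pi(G^F)$ contains all unipotent elements, so restriction is controlled by the GGGR multiplicity-one bound of Lemma \ref{Pro_gen_1}. Concretely, the argument of Lemma \ref{Pro_gen_1} works verbatim with the Dudas--Malle progenerator of $G^F$ and yields ${\rm dim}\,{\rm Hom}_{G^F}(\mathscr{P},\widehat{\rm H}^0(\sigma,\mathcal{L}))\le1$. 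Nonvanishing of $\widehat{\rm H}^0(\sigma,\mathcal{L})$ follows from the trace identity, and irreducibility over $G^F$ then follows exactly as in the proof of Theorem \ref{Tate_irr}.
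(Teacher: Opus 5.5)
Steps 1--3 are essentially sound and track the paper's reduction: adjoint quotient, splitting into blocks, the K\"unneth formula, and homothety of lattices coming from irreducibility of $r_\ell(\Pi)$. In Step 2 the homothety argument alone already shows that $\mathcal{L}$ is $G_{\rm ad}^{F^\ell}$-stable, so the prime-to-$\ell$ index is not needed there.

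\emph{The gap is Step 4.} The argument of Lemma \ref{Pro_gen_1} does not work verbatim for $G^F$. Its essential input is the multiplicity-one result \cite[Proposition 3.1]{Dudas_Malle_irr_mod_l}, which is exactly why everything is first proved for $G$ simple of adjoint type. If the lemma worked for $G$ itself, your Steps 1--3 would be superfluous.

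For non-adjoint $G$, the $G_{\rm ad}^{F^\ell}$-class in $C_\Pi^{F^\ell}$ singled out by Lemma \ref{sigma_stable_mult_1_gggr} can split into several $G^{F^\ell}$-classes. These are permuted transitively by $G_{\rm ad}^{F^\ell}/\pi(G^{F^\ell})$. Since $\Pi$ is the restriction of $\Pi_{\rm ad}$, its $(U_{1.5}^{F^\ell},\widetilde{\eta}_u)$-coinvariants are one-dimensional for every class in this fibre. So Lemma \ref{Tate_injection} only bounds ${\rm dim}\,{\rm Hom}_{G^F}(\mathscr{P},\widehat{\rm H}^0(\sigma,\mathcal{L}))$ by the number of $\sigma$-stable classes in that fibre, not by $1$.

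The obstruction is structural. Let $V=\widehat{\rm H}^0(\sigma,\mathcal{L})$ and let $\lambda$ be a character of $G_{\rm ad}^F/\pi(G^F)$. Then $\lambda$ is trivial on every $U_{1.5}^F$, so $V$ and $V\otimes\lambda$ have the same $(U_{1.5}^F,\overline{\widetilde{\psi}}_u)$-coinvariants for all $u$. On the other hand, ${\rm dim}\,{\rm End}_{\pi(G^F)}(V)$ is exactly the number of $\lambda$ with $V\otimes\lambda\simeq V$, since the index is prime to $\ell$. GGGR counts therefore cannot rule out a self-twist, and Step 4 needs an input of a different kind.

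One sufficient input would be, for each $\lambda\neq 1$, an element $h\in G_{\rm ad}^F$ with $\lambda(h)\neq 1$ and $\overline{\chi_{\Pi_{\rm ad}}(h)}\neq 0$. This would contradict $V\otimes\lambda\simeq V$ via Lemma \ref{char_iden} applied to $G_{\rm ad}^{F^\ell}\rtimes\sigma$. Note that this lemma is an identity of traces in $\overline{\mathbb{F}}_\ell$, not of Brauer characters. As written, you supply no such argument.

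The paper avoids your Step 4 by a different first move. It passes through a regular embedding $G\hookrightarrow\widetilde G$ to the case of connected centre. There Lang's theorem gives $G^F/Z(G)^F\simeq G_{\rm ad}^F$, so quotienting by the centre never produces a proper normal subgroup. The passage between $\widetilde G^F$ and $G^F$ is justified in the paper by \cite[Proposition 17.4]{Geck_basic_set_unip_blocks} and the equality of unipotent decomposition matrices \cite{Cabanes_Enguehard}. These are statements about mod-$\ell$ reductions of ordinary characters. So on that route one still owes an explanation of why irreducibility of $\widehat{\rm H}^0$ survives restriction from $\widetilde G^F$ to $G^F$, which is the same issue you isolated.

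Finally, a point shared with the paper. $\sigma$ preserves each block $G_i^{n_i}$ automatically, but $(G_i^{n_i})^{F^\ell}\simeq G_i^{F^{\ell n_i}}$ requires $\ell\nmid n_i$, which is not a hypothesis. If $\ell\mid n_i$, then $\sigma$ cyclically permutes $\ell$ copies of $G_i^{F^{n_i}}$. The block lattice is then an $\ell$-fold tensor power on which $\sigma$ permutes a $\Lambda$-basis. This gives $\widehat{\rm H}^1=0$ and $\widehat{\rm H}^0$ equal to the Frobenius twist of the irreducible mod-$\ell$ reduction of the factor, so the conclusion still holds in that case.
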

\begin{proof}
If we fix a regular embedding $G\hookrightarrow \widetilde{G}$ with the center $Z(\widetilde{G})$ being connected, then the restriction of unipotent representations of $\widetilde{G}^F$ to $G^F$ remains irreducible (\cite[Proposition 17.4]{Geck_basic_set_unip_blocks}). Furthermore, the assumption on $\ell$ shows that the unipotent parts of the decomposition matrices of $G^F$ and $\widetilde{G}^F$ are equal (see \cite{Cabanes_Enguehard}). Therefore, without loss of generality, we can assume that $Z(G)$ is connected.

Since $Z(G)^F = Z(G^F)$ and the unipotent characters of $G^F$ are trivial on the center, we are further reduced to the case where $G$ is semisimple of adjoint type. Then $G$ is a direct product
$$ G = G_1^{n_1}\times\cdots\times G_r^{n_r}, $$
where each $G_i$ is simple of adjoint type and $F$ cyclically permutes the copies of $G_i$ in $G_i^{n_i}$, i.e., $F(g_1,\cdots,g_{n_i}) = (F(g_{n_i}), F(g_{n_i-1}),\dots,F(g_1))$. For one such block, projection onto the first component gives a group isomorphism $(G_i^{n_i})^F\simeq G_i^{F^{n_i}}$.
This isomorphism carries unipotent representations to unipotent representations. The finite group $G^F$ is a direct product
$$ G^F = G_1^{F^{n_1}}\times\cdots\times G_r^{F^{n_r}}. $$
Since $\Pi$ is a unipotent cuspidal $\ell$-adic representation of $G^{F^\ell}$, we have
$$ \Pi = \Pi_1\times\cdots\times \Pi_r, $$
where each $\Pi_i$ is a unipotent cuspidal $\ell$-adic representation of $G_i^{F^{\ell n_i}}$ such that $\Pi_i\simeq \Pi_i^\sigma$. Since each $G_i$ simple of adjoint type, there exists a unique action of $\sigma$ on $\Pi_i$ (see Lemma \ref{sigma_action_uni_cusp}), and hence we get a unique $\sigma$-action on $\Pi$. Choose a $\Lambda[G_i^{F^{\ell n_i}}\rtimes\sigma]$-stable lattice $\mathcal{L}_i$ in $\Pi_i$, and we set 
$$ \mathcal{L}=\mathcal{L}_1\otimes
\cdots\otimes\mathcal{L}_r. $$
Then $\mathcal{L}$ is a $\Lambda[G^{F^\ell}\rtimes\sigma]$-stable lattice in $\Pi$. Since each $\widehat{\rm H}^1(\sigma,\mathcal{L}_i)$ is trivial by Theorem \ref{first_tate_trivial}, it follows from \cite[Corollary 2.2]{Modular_moonshine_3} that the first Tate cohomology group $\widehat{\rm H}^1(\sigma,\mathcal{L})$ is trivial, and 
$$ \widehat{\rm H}^0(\sigma,\mathcal{L}) = \widehat{\rm H}^0(\sigma,\mathcal{L}_1)\otimes
\cdots\otimes\widehat{\rm H}^0(\sigma,\mathcal{L}_r). $$
By Theorem \ref{Tate_irr}, each $\widehat{\rm H}^0(\sigma,\mathcal{L}_i)$ is irreducible. Hence, $\widehat{\rm H}^0(\sigma,\mathcal{L})$ is irreducible. Now, let us consider the short exact sequence of $\Lambda[\sigma]$-modules:
$$ 0\longrightarrow \mathcal{L}\xrightarrow{{\rm mult.} \ell}\mathcal{L}\longrightarrow\mathcal{L}\otimes_\Lambda\overline{\mathbb{F}}_\ell\longrightarrow 0. $$
The corresponding long exact sequence of Tate cohomology gives the following isomorphism
$$ \widehat{\rm H}^0(\sigma,\mathcal{L})\simeq \widehat{\rm H}^0(\sigma,\mathcal{L}\otimes_
\Lambda\overline{\mathbb{F}}_\ell). $$
Since the mod-$\ell$ reduction of $\Pi$ is irreducible (\cite[Theorem 3.2]{Dudas_Malle_irr_mod_l}), the Tate cohomology $\widehat{\rm H}^0(\sigma,\mathcal{L}\otimes_\Lambda\overline{\mathbb{F}}_\ell)$ is independent of the choice of $\mathcal{L}$ (see \cite[Proposition 6.1]{TV_functoriality}). Thus, we get the irreducibility of $\widehat{\rm H}^0(\sigma,\mathcal{L})$ for any $\Lambda[G^{F^\ell}\rtimes\sigma]$-stable lattice $\mathcal{L}$ in $\Pi$. This completes the proof.
\end{proof}
\begin{remark}\normalfont
Let $G$, $\ell$, $p$ be as in Theorem \ref{Tate_irr_general}. Let $\Pi$ is a unipotent cuspidal $\ell$-adic representation of $G^{F^\ell}$ such that $\Pi\simeq \Pi^\sigma$. Recall that the mod-$\ell$ reduction $r_\ell(\Pi)$ is irreducible and is $\sigma$-stable. Then it follows from the proof of Theorem \ref{Tate_irr_general} that the Tate cohomology group $\widehat{\rm H}^0(\sigma,r_\ell(\Pi))$ is irreducible.
\end{remark}

\bibliographystyle{amsalpha}
\bibliography{Tate_unipotent}
Sabyasachi Dhar,\\
Department of Mathematics, Indian Institute of Technology Bombay, Maharashtra-400076, India.\\
\texttt{mathsabya93@gmail.com},
\texttt{sabya@math.iitb.ac.in}

\end{document}